\numberwithin{equation}{section}
\newcommand{\bigO}{\mathcal{O}}
\newcommand{\abs}[1]{\left\lvert#1\right\rvert}
\newcommand\norm[1]{\left\lvert#1\right\rvert}
\newcommand{\R}{\mathbb{R}}
\newcommand{\Z}{\mathbb{Z}}
\newcommand{\N}{\mathbb{N}}
\newcommand{\ee}{\mathrm{e}}
\newcommand{\I}{\mathrm{i}}
\newcommand{\ii}{\mathrm{i}}
\newcommand{\dk}{\, \mathrm{d}k}
\newcommand{\ds}{\, \mathrm{d}s}
\newcommand{\dx}{\, \mathrm{d}x}
\newcommand{\dy}{\, \mathrm{d}y}
\newcommand{\dz}{\, \mathrm{d}z}
\newcommand{\diff}{\mathrm{d}}
\newcommand{\Diff}{\mathrm{D}}
\newcommand{\II}{{\mathcal I}}
\newcommand{\NN}{{\mathcal N}}
\newcommand{\Schwartz}{{\mathcal S}}
\DeclareMathOperator{\re}{Re}
\newcommand{\supp}{\mathop{\mathrm{supp}}}
\newtheorem{theorem}{Theorem}[section]
\newtheorem{lemma}[theorem]{Lemma}
\newtheorem{proposition}[theorem]{Proposition}
\newtheorem{corollary}[theorem]{Corollary}
\newtheorem{remark}[theorem]{Remark}
\theoremstyle{definition}
\begin{document}

\allowdisplaybreaks

\title[Existence of DS type solitary waves for the FDKP equation]{Existence of Davey--Stewartson type solitary waves for the fully dispersive Kadomtsev--Petviashvilii equation}

\author{Mats Ehrnstr\"om}
\address{Department of Mathematical Sciences, Norwegian University of Science and Technology, 7491 Trondheim, Norway}
\email{mats.ehrnstrom@ntnu.no}

\author{Mark D. Groves}
\address{Fachrichtung Mathematik, Universit\"at des Saarlandes, Postfach 151150, 66041 Saarbr\"ucken, Germany}
\email{groves@math.uni-sb.de}

\author{Dag Nilsson}
\address{Fachrichtung Mathematik, Universit\"at des Saarlandes, Postfach 151150, 66041 Saarbr\"ucken, Germany}
\email{nilsson@math.uni-sb.de}

\thanks{ME and DN acknowledge the support by grant no. 250070 from the Research Council of Norway.}

\subjclass[2010]{76B15}
\keywords{Water waves, Solitary waves, Minimisation, FDKP equation, DS equation}

\maketitle

\begin{abstract}
We prove existence of small-amplitude modulated solitary waves for the full-dispersion Kadomtsev--Petviashvilii (FDKP) equation with weak surface tension. The resulting waves are small-order perturbations of scaled, translated and frequency-shifted solutions of a Davey--Stewartson (DS) type equation. The construction is variational and relies upon a series of reductive steps which transform the FDKP functional to a perturbed scaling of the DS functional, for which least-energy ground states are found. We also establish a convergence result showing that scalings of FDKP solitary waves converge to ground states of the DS functional as the scaling parameter tends to zero.

Our method is robust and applies to nonlinear dispersive equations with the properties that (i) their dispersion relation has a global minimum (or maximum)
at a non-zero wave number, and (ii) the associated formal weakly nonlinear analysis leads to a DS equation of elliptic-elliptic focussing type. We present full details for
the FDKP equation.

\end{abstract}

\section{Introduction}\label{sec:intro}
\subsection{Background}
In this article we consider the $(2+1)$-dimensional full-dispersion Kadomtsev--Petviashvili (FDKP) equation
\begin{equation}\label{eq:fdkp}
u_t+m(\Diff)u_x+2uu_x=0,
\end{equation}
where the Fourier multiplier operator \(m(\Diff)\) is given by
\begin{equation}\label{eq:m}
m(\Diff)=(1+\beta\abs{\Diff}^2)^\frac{1}{2}\left(\frac{\tanh(\abs{\Diff})}{\abs{\Diff}}\right)^\frac{1}{2}\left(1+\frac{2\Diff_2^2}{\Diff_1^2}\right)^\frac{1}{2}
\end{equation}
with \(\Diff=-\mathrm{i}(\partial_x,\partial_y)\). This equation was introduced by Lannes \cite[chapter 8]{Lannes} (see also Lannes \& Saut \cite{LannesSaut14}, and Pilod \emph{et al.} \cite{PilodSautSelbergTesfahun21} for a discussion of the initial-value problem)
as a model equation for weakly transversal, small-amplitude, three-dimensional water waves
which preserves the dispersion relation
\begin{equation}\label{eq:c}
c(\omega)=(1+\beta \omega^2)^\frac{1}{2}\left(\frac{\tanh(\omega)}{\omega}\right)^\frac{1}{2}
\end{equation}
for linear sinusoidal water waves with speed $c$ and wave number $\omega$; here the Bond
number $\beta$ is a dimensionless parameter measuring the relative strength of surface
tension. The FDKP equation is an alternative to the standard KP equation
\[
(u_t - 2 u u_x +\tfrac{1}{2}(\beta-\tfrac{1}{3})u_{xxx})_x - u_{yy} = 0,
\]
which is derived from \eqref{eq:fdkp} (or directly from the water-wave equations)
by  making an additional long-wave approximation.
Nontrivial solutions to the above equations are
termed \emph{steady waves} if they depend upon $x$ and $t$ only through the
combination $x-ct$, and \emph{solitary waves} are steady waves which are evanescent
in all spatial directions.

\begin{figure}
\centering
\includegraphics[scale=0.5]{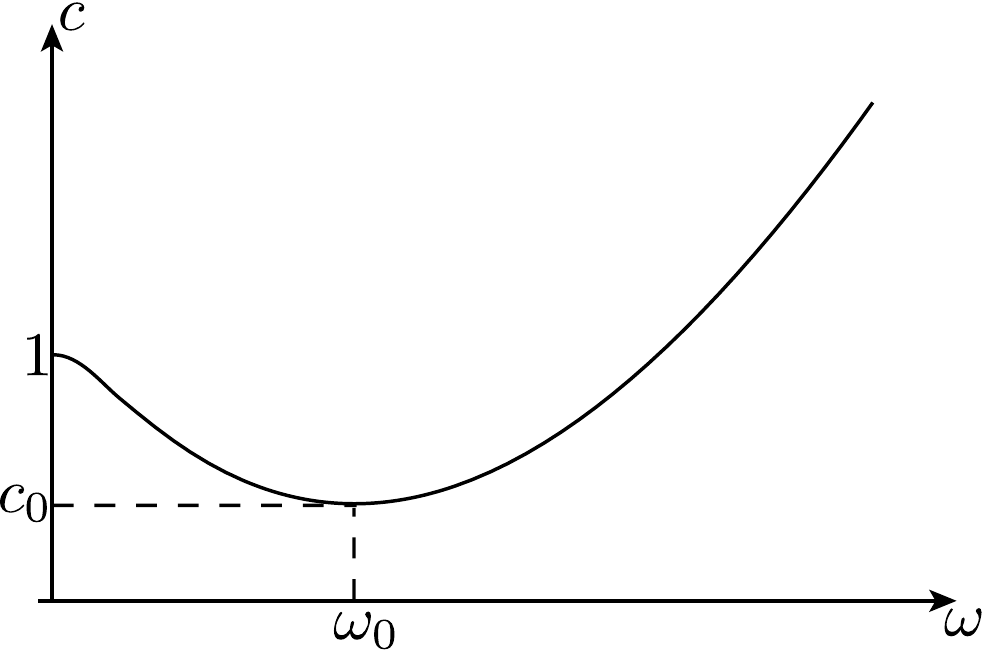}
\qquad\includegraphics[scale=0.5]{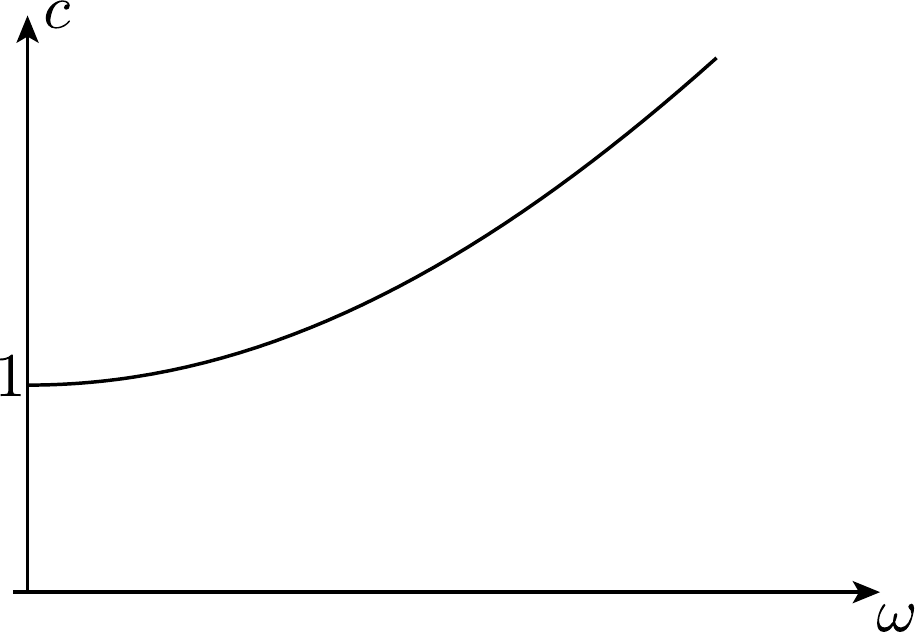}
\caption{The dispersion relation \eqref{eq:c} for 
$0<\beta<\frac{1}{3}$ (left) and $\beta>\frac{1}{3}$ (right)} \label{fig:disprel}
\end{figure}

For $0<\beta<\frac{1}{3}$ (`weak surface tension') the function $c=c(\omega)$ has a positive global minimum
$c_0 = c(\omega_0) >0$ for some $\omega_0>0$ (see Figure \ref{fig:disprel}, right), and solitary-wave
solutions to \eqref{eq:fdkp}, that is, solutions of the form
\[
u(x,y,t)=u(x-ct,y)
\]
which satisfy
\begin{equation}\label{t-fdkp}
c u+m(\Diff)u+u^2=0,
\end{equation}
can be obtained formally by a modulational ansatz. Writing $c=c_0(1-\varepsilon^2)$ and
formally expanding \(u\) as a series 
\[
u(x,y)= u_1(x, y) + u_2(x, y) + u_3(x, y) + \cdots,
\]
where
\begin{align*}
u_1(x,y)&= \re \big(\varepsilon \zeta(\varepsilon x,\varepsilon y)\exp(\mathrm{i}\omega_0 x)\big),\\
u_2(x,y)&= \re \bigg( \varepsilon^2 \sum_{j=0}^2 \zeta_{2,j}(\varepsilon x,\varepsilon y) \exp(\mathrm{i} j\omega_0 x)\bigg),\\
u_3(x,y)&= \re \bigg( \varepsilon^3 \sum_{j=0}^3 \zeta_{3,j} (\varepsilon x,\varepsilon y) \exp(\mathrm{i} j\omega_0 x)\bigg),
\end{align*}
we find from \eqref{t-fdkp} that $\zeta$ satisfies the Davey-Stewartson (DS) type equation
\begin{equation}\label{eq:ds}
-a_1\zeta_{xx}-a_2 \zeta_{yy}+a_3 \zeta-\tfrac{1}{8 \, n(2\omega_0,0 )} \abs{\zeta}^2 \zeta-\tfrac{1}{4} \zeta \left (\left(1+\tfrac{2 \Diff_2^2}{\Diff_1^2}\right)^\frac{1}{2}-c_0\right)^{-1} \abs{\zeta}^2  =0,
\end{equation}
where $n(k)=m(k)-c_0$ and
\[
a_1=\tfrac{1}{8}\partial_{k_1}^2 n(\omega_0,0),\qquad a_2=\tfrac{1}{8}\partial_{k_2}^2 n(\omega_0,0),\qquad a_3=\tfrac{1}{4} c_0.
\]
Equation \eqref{eq:ds} is of elliptic-elliptic, focussing type, and a similar equation is obtained
from the water-wave equations by the same method.
Variational
existence proofs for solitary-wave solutions to various classes of elliptic-elliptic, focussing DS equations
have been given by
Cipolatti \cite{Cipolatti92}, Wang, Ablowitz \& Segur \cite{WangAblowitzSegur94} and Papanicolaou \emph{et al.} \cite{PapanicolaouSulemSulemWang94}
(see Figure \ref{dsfloc} for a sketch of the corresponding function $u_1(x,y)$).
In this paper we rigorously establish the existence of
small-amplitude solitary-wave solutions to the FDKP equation which are approximated by scaled solutions of
\eqref{eq:ds}.

\begin{figure}
\centering
\includegraphics[scale=0.6]{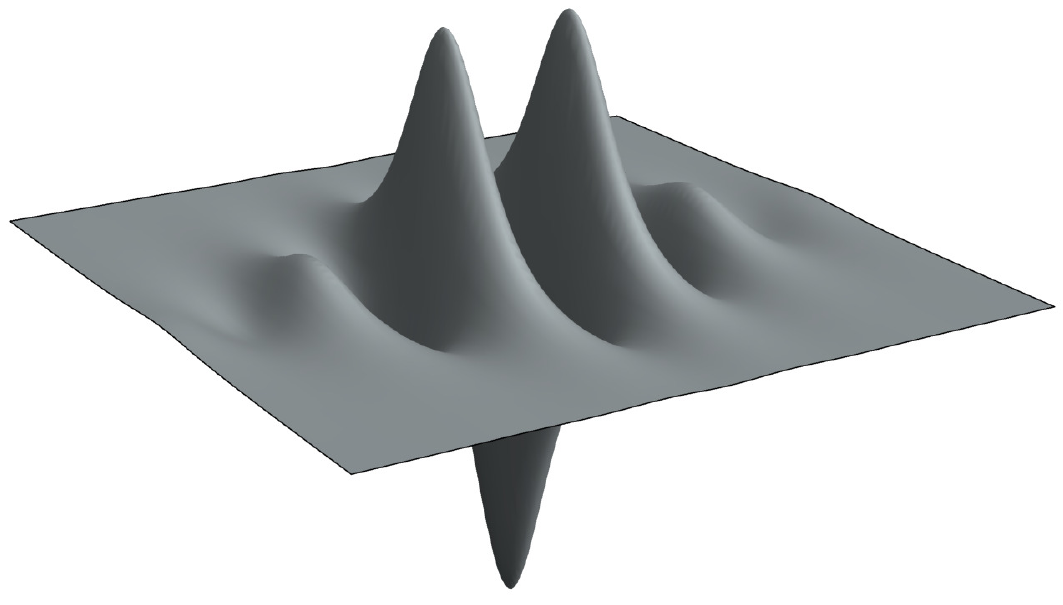}
\caption{Sketch of a modulated solitary wave of DS type} \label{dsfloc}
\end{figure}

The formal modulational ansatz above also applies to the water-wave problem
with weak surface tension in \eqref{eq:c}, and similarly leads to a
DS equation. This problem was recently treated by Buffoni, Groves \& Wahl\'{e}n \cite{BuffoniGrovesWahlen18},
who confirmed the validity of the DS approximation for solitary waves. That paper is different from, but motivated by, a series of variational technique existence results on the full water-wave problem, including Groves \& Wahl\'{e}n \cite{GrovesWahlen08} on two-dimensional gravity waves including vorticity effects, and 
Groves \& Sun \cite{GrovesSun08} and Buffoni \emph{et al.} \cite{BuffoniGrovesSunWahlen13}
on three-dimensional waves with strong surface tension. In our article we place the newer method from  reference \cite{BuffoniGrovesWahlen18} in a broader context, showing how it can be applied to equations where the modulational ansatz leads to
an elliptic-elliptic focussing DS equation and giving full details for the FDKP equation as a representative example.
As part of our analysis we also give an alternative proof of the existence of solitary-wave solutions
to DS equations of this type.

In the case $\beta>\frac{1}{3}$ (`strong surface tension') the function $c=c(\omega)$ has a global minimum $c(0)=1$
at $\omega=0$ (see Figure \ref{fig:disprel}, left) and solitary waves are obtained formally using a long-wave ansatz.
Writing $c=1-\varepsilon^2$ and formally expanding $u$ in a series
$$u(x,y)=\varepsilon^2 \zeta(\varepsilon x, \varepsilon^2 y) + \varepsilon^4 \zeta_2(\varepsilon x, \varepsilon^2 y) + \cdots,$$
one finds from \eqref{t-fdkp} that $\zeta$ satisfies the steady elliptic KP (`KP-I') equation,
which has an explicit solitary-wave solution. Ehrnstr\"{o}m \& Groves \cite{EhrnstroemGroves18} have confirmed the
validity of this approach by showing that the FDKP equation indeed has solitary-wave solutions with
speed slightly less than unity which are approximated by scaled KP-I solitary waves.

The formal derivations of `fully reduced' model equations are analogous to normal-form approaches used in local
bifurcation theory, where the structure of the linear part of the equation is used to derive the
canonical form of the nonlinear part. It is for this reason that fully reduced model equations
derived from their full-dispersion counterparts are of the same type as the corresponding model
equations derived directly from the water-wave equations. A necessary condition for local bifurcation
is that the linear part of the equation is not invertible, and the simplest case arises when its (nontrivial) kernel
is minimal. In the context of a solitary wave modulating a periodic wavetrain with wave number $\omega_0$
(with $\omega_0=0$ for long waves) these conditions state that the
wave number-wave speed map $c=c(\omega)$ satisfies $c^\prime(\omega_0)=0$ and that $c^{-1}\{c_0\}$
contains only $\pm \omega_0$, so that
$c_0=c(\omega_0)$ is a global extremum. Under these hypotheses the
ans\"{a}tze described above lead to model equations of KP or DS type which have
solitary-wave solutions if the relevant ellipticity and focussing conditions are
satisfied. It then remains to confirm \emph{a posteriori} by a rigorous
mathematical method that the original equation has a corresponding solitary-wave solution.

There are a number of existence theories for solitary-wave solutions to $(1+1)$-dimensional full-dispersion model equations for
water waves. Augmenting the Korteweg-de Vries (KdV) equation with the full-disperson symbol \eqref{eq:c} with \(\beta = 0\), one obtains the Whitham equation (see Whitham \cite[section 13.14]{Whitham}). Small-amplitude solitary-wave solutions to that equation approximated by scaled KdV solitary waves have been found by Ehrnstr\"{o}m, Groves \& Wahl\'{e}n \cite{EhrnstroemGrovesWahlen12}
and Stefanov \& Wright \cite{StefanovWright20} (see also Hildrum \cite{Hildrum20} for
low-regularity versions of the equation and Truong, Wahl\'{e}n \& Wheeler \cite{TruongWahlenWheeler21} for large-amplitude solitary waves). The gravity-capillary version of the
Whitham equation, which is obtained by setting $\Diff_2=0$ in \eqref{eq:m}, was treated by
Arnesen \cite{Arnesen16}, Maehlen \cite{Maehlen20}, Johnson \& Wright \cite{JohnsonWright20}
and Johnson, Truong \& Wheeler \cite{JohnsonTruongWheeler21}. For this equation the modulational ansatz yields
the focussing nonlinear Schr\"{o}dinger (NLS) equation for $\beta<\frac{1}{3}$, while the long-wave ansatz leads to
a KdV equation for $\beta>1/3$, and these papers indeed confirm the existence of small-amplitude solitary-wave solutions approximated by scaled NLS or KdV solitary waves. Note that the long-wave ansatz, while still leading to the KdV equation,
is insufficient in the case $\beta<\frac{1}{3}$ since $c^{-1}\{1\}$ also contains nonzero wavenumbers (see above);
solitary waves are subject to periodic disturbances at these wavenumbers and form \emph{generalised
solitary waves} which decay to periodic ripples at large distances (see  references \cite{JohnsonWright20}
and \cite{JohnsonTruongWheeler21}). Small-amplitude solitary waves for Whitham--Boussinesq equations
and full-dispersion Green--Naghdi equations have been discussed in a similar vein by respectively
Nilsson \& Wang \cite{NilssonWang19}, Dinvay \& Nilsson \cite{DinvayNilsson21}
and Duch\^ene, Nilsson and Wahl\'{e}n \cite{DucheneNilssonWahlen18}. Other $(2+1)$-dimensional
full-dispersion model equations have also been introduced, in particular systems of DS and Benny--Roskes type
(see Lannes \cite[chapter 8]{Lannes}, Obrecht \cite{Obrecht14} and Obrecht \& Saut \cite{ObrechtSaut15}); at the
time of writing it is unkown whether they admit solitary-wave solutions.

\subsection{Methodology}

Our method is variational. The steady FDKP equation \eqref{t-fdkp} (with $c=c_0(1-\varepsilon^2)$) and steady DS equation
\eqref{eq:ds} are the Euler--Lagrange equations for the variational functionals
\begin{equation}\label{eq:fdkp-functional}
\mathcal{I}_\varepsilon(u)=\frac{1}{2}\int_{\mathbb{R}^2} \left( u m(\Diff)u + c_0(\varepsilon^2 -1)u^2\right) \dx \dy+\frac{1}{3}\int_{\mathbb{R}^2}u^3 \dx \dy.
\end{equation}
and
\begin{align}
\mathcal{T}_0(\zeta)&=\int_{\mathbb{R}^2} \left( a_1\abs{\zeta_x}^2+a_2\abs{\zeta_y}^2+a_3\abs{\zeta}^2-\tfrac{1}{16 \, n(2\omega_0,0)}\abs{\zeta}^4 \right) \dx \dy \nonumber \\
&\qquad -\frac{1}{8}\int_{\mathbb{R}^2}\left(\left(1+\frac{2k_2^2}{k_1^2}\right)^\frac{1}{2}-c_0\right)^{-1}\big| \widehat{|\zeta|^2}(k)\big|^2 \dk,
\label{ds-functional}
\end{align}
which we study in the completion $X$ of the Schwarz space
$\partial_x\mathcal{S}(\mathbb{R}^2)$ with respect to the norm
$$
\norm{u}_X^2 =\int_{\mathbb{R}^2}\left(1+\frac{k_2^2}{k_1^2}+\frac{k_2^4}{k_1^2}+\abs{k}^{2s}\right)\abs{\hat{u}(k)}^2 \dk, \qquad s>\tfrac{3}{2},
$$
and the standard Sobolev space $H^1({\mathbb R}^2)$ (the choice of function spaces and related technical details
are discussed in Section \ref{sec:function spaces}). We find a nontrivial critical point of $\II_\varepsilon$ by perfoming a
rigorous local variational reduction which converts it to a perturbation of ${\mathcal T}_0$ and employing a novel method for
finding critical points of ${\mathcal T}_0$ which is robust under perturbations.

\begin{figure}
\includegraphics[scale=0.6]{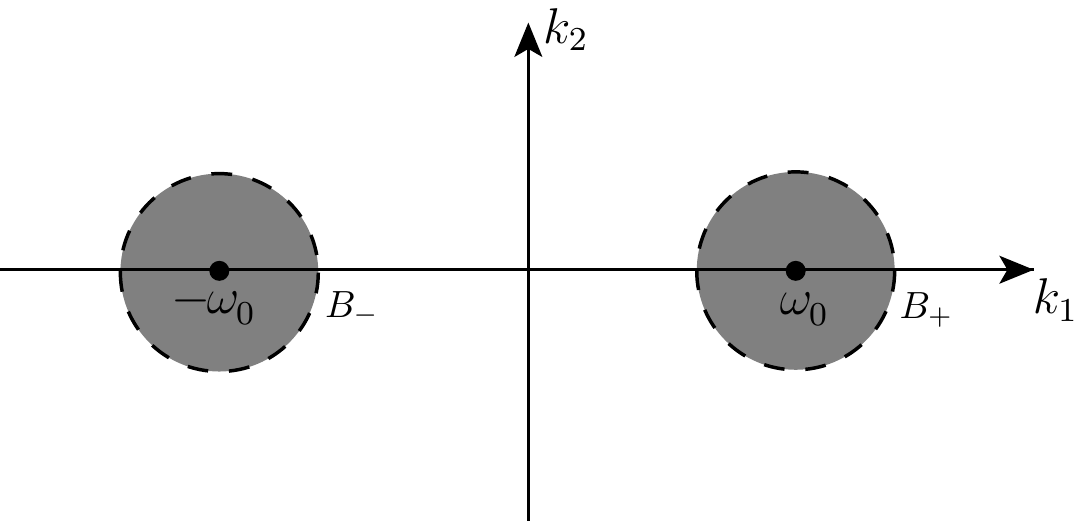}
\caption{The bi-disc $B=B_+\cup B_-$ in Fourier space}\label{fig:bidisc}
\end{figure}

The modulational ansatz suggests that the Fourier transform of a solitary wave (i.e. a nontrivial critical point of $\mathcal{I}_\varepsilon$)
is concentrated near the points
$(\omega_0,0)$ and $(-\omega_0,0)$. We therefore decompose
$$u=u_1+u_2,$$
where $u_1= \chi(\Diff)u \in X_1:= \chi(\Diff)X$, $u_2=(1-\chi(\Diff))u \in X_2:=(1-\chi(D))X$
and $\chi$ is the characteristic function of the set $B$ shown in Figure \ref{fig:bidisc}.
Observe that $u_1 \in U$ is a critical point of ${\mathcal I}_\varepsilon$, i.e.
$$\diff {\mathcal I}_\varepsilon[u](v)=0$$
for all $v$ if and only if 
$$
\diff {\mathcal I}_\varepsilon[u_1+u_2](v_1)=0, \qquad \diff {\mathcal I}_\varepsilon[u_1+u_2](v_2)=0
$$
for all $v_1 \in X_1$ and $v_2 \in X_2$.
For sufficiently small values of $\varepsilon$ the second of these equations can be solved for $u_2$ as a function of $u_1$
and we thus obtain the reduced functional
\begin{equation}
\tilde{\mathcal I}_\varepsilon(u_1):={\mathcal I}_\varepsilon(u_1+u_2(u_1)). \label{red func}
\end{equation}

In the derivation of \eqref{red func} we take $u_1 \in U_1$, where
$$U_1=\{u_1\in \overline{B}_\Lambda(0) \colon \norm{u_1}_{\dot{H}_{\omega_0}^1} \leq \varepsilon\Lambda\},$$
\(\overline{B}_\Lambda(0)\) is the closed ball of radius \(\Lambda\) in \(X_1\) and
$$
\norm{u_1}_{\dot{H}_{\omega_0}^1}=\left(\int_{\mathbb{R}^2}((\abs{k_1}-\omega_0)^2+k_2^2)\abs{\hat{u}_1}^2\ \mathrm{d}k\right)^\frac{1}{2}
$$
is the frequency-shifted analogue of the homogeneous \(\dot H^1(\R^2)\)-norm. This construction, which deviates
from the approach of Buffoni, Groves \& Wahl\'{e}n \cite{BuffoniGrovesWahlen18} necessitating a scaled $H^1(\R^2)$ norm for $u_1$, has the advantage of indicating clearly
that $\norm{u_1}_{\dot{H}_{\omega_0}^1}$ is the `small quantity' in the construction. It also allows the use of standard Gagliardo-Nirenberg inequalities, in particular
to derive the estimate
\[
\norm{u_1}_{\infty} \lesssim \norm{u_1}_{L^2}^\theta\norm{u_1}_{\dot{H}_{\omega_0}^1}^{1-\theta}, \qquad \theta \in (0,1),
\]
(see Lemma~\ref{lemma:interpolation}).
We find that
$$
\norm{u_2(u_1)}_{X} \lesssim  \varepsilon^{1-\theta} \norm{u_1}_{L^2}^{1+ \theta} \norm{u_1}_{\dot{H}_{\omega_0}^1}^{1-\theta}
$$
(with corresponding estimates for the derivatives of $u_2$). Applying the DS scaling
$$u_1(x,y)= \re \varepsilon \big(\zeta(\varepsilon x,\varepsilon y)\exp(\mathrm{i}\omega_0 x)\big)$$
and noting that
$\norm{u_1(\zeta)}_{L^2}=\norm{\zeta}_{L^2}$,
$\norm{u_1(\zeta)}_{\dot{H}_{\omega_0}^1}=\varepsilon\norm{\zeta}_{\dot{H}^1}$,
we ultimately find that
$$\varepsilon^{-2}\tilde{\mathcal I}_\varepsilon(u_1) = {\mathcal T}_\varepsilon(\zeta),$$
where
$${\mathcal T}_\varepsilon(\zeta) := {\mathcal T}_0(\zeta) + \bigO(\varepsilon^\frac{1}{2}|\zeta|_{H^1}^2)$$
(with corresponding estimates for the derivatives of the remainder term); each
critical point $\zeta$ of ${\mathcal T}_\varepsilon$ with $\varepsilon>0$ corresponds to a critical point
$u_1$ of $\tilde{\mathcal I}_\varepsilon$ which in turn defines a critical point $u_1+u_2(u_1)$ of
$\mathcal{I}_\varepsilon$.

We study ${\mathcal T}_\varepsilon$
as a functional on the set
\[
B_M(0)=\{\zeta\in H_\varepsilon^1(\mathbb{R}^2)\colon \norm{\zeta}_{H^1}\leq M\},
\]
where \(H_\varepsilon^1(\R^2)=\chi_\varepsilon(\Diff)H^1(\R^2)\), $\chi_\varepsilon$
is the characteristic function of the disc $B_{\delta/\varepsilon}(0,0)$ which contains the support of $\hat{\zeta}$
and $M$ is large enough that $B_M(0)$ contains nontrivial critical points of ${\mathcal T}_\varepsilon$;
the constant $\Lambda$ defining $U_1$ is chosen proportionally to $M$.
Note that our solutions to the FDKP equation have small amplitude but finite energy
because
$$|u|_{L^2} = |u_1|_{L^2} + |u_2|_{L^2} = 2|\zeta|_{L^2} + \bigO(\varepsilon^{1-\theta})$$
and
$$|u|_\infty \lesssim |u_1|_\infty + |u_2|_\infty \lesssim \bigO(\varepsilon^{1-\theta})$$
where we have estimated
$$|u_2(u_1)|_{L^2} \leq  |u_2(u_1)|_X$$
and
$$|u_2(u_1)|_\infty \lesssim |u_2(u_1)|_{H^s(\R^2)} \lesssim |u_2(u_1)|_X.$$

In Section \ref{sec:existence} we present a short proof of the existence
of a critical point $\zeta_\infty$ of ${\mathcal T}_\varepsilon$ by restricting it to its natural constraint set ${\mathcal N}_\varepsilon$,
noting that the critical points of $\mathcal{T}_\varepsilon$ coincide with those of $\mathcal{T}_\varepsilon|_{\NN_\varepsilon}$.
We strengthen the result in Section~\ref{sec:ground states} by showing that $\zeta_\infty$ is a ground state, i.e. a minimiser of $\mathcal{T}_\varepsilon|_{\NN_\varepsilon}$.
The theory also applies to the
case $\varepsilon = 0$ (and thus gives an alternative variational theory for solitary-wave solutions to the DS equation \eqref{eq:ds}). We exploit this fact
to show that the critical points of ${\mathcal T}_\varepsilon$ converge to critical points of ${\mathcal T}_0$
as $\varepsilon \to 0$.

We conclude this section with a summary of our main results.

\begin{theorem} \label{main theorem}
There exists \(\varepsilon_\star > 0\) with the following properties.

\begin{itemize}
\item[(i)] {\bf (The DS case.)} For each \(\varepsilon \in [0,\varepsilon_\star)\) there is a minimising sequence  \(\{\zeta_n\} \subset H^1(\R^2)\) for  $\mathcal{T}_\varepsilon$
over its natural constraint set. This sequence satisfies $\lim_{n \rightarrow \infty} \diff {\mathcal T}_\varepsilon[\zeta_n]=0$ and converges weakly in $H^1(\R^2)$ and strongly in \(L^\infty(\R^2)\) to a ground state $\zeta_\infty$.\\[-6pt]

\item[(ii)] {\bf (The FDKP case.)} Suppose that $\varepsilon \in (0,\varepsilon_\star)$. There is a mapping $\zeta \mapsto u(\zeta)$ such that $\{u(\zeta_n)\}$
converges weakly in $X$  and strongly in \(L^\infty(\R^2)\) to a nontrivial critical point $u_\infty=u(\zeta_\infty)$
of ${\mathcal I}_\varepsilon$.\\[-6pt]

\item[(iii)] {\bf (The limiting case.)} Let $\{\varepsilon_n\} \subset (0,\varepsilon_\star)$ be a sequence with $\lim_{n \to \infty} \varepsilon_n=0$ and
let $\zeta^{\varepsilon_n}$ be a ground state of ${\mathcal T}_{\varepsilon_n}$.
There exists a ground state $\zeta^\star$ of ${\mathcal T}_0$ such that
$\{\zeta^{\varepsilon_n}\}$ converges (up to subsequences and translations) in $H^1(\R^2)$ to $\zeta^\star$.
\end{itemize}
\end{theorem}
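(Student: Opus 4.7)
The three parts are linked by the variational reduction: (i) establishes ground states of the reduced functional $\mathcal{T}_\varepsilon$, (ii) lifts these to critical points of $\mathcal{I}_\varepsilon$ through $u(\zeta)=u_1(\zeta)+u_2(u_1(\zeta))$, and (iii) exploits the uniformity of the construction in $\varepsilon\in[0,\varepsilon_\star)$ to pass ground states to the limit $\varepsilon \downarrow 0$. The decisive obstacle is the failure of compactness coming from translation invariance of $\mathbb{R}^2$, and I would handle it uniformly in $\varepsilon$ by a concentration--compactness argument adapted to the frequency-localised Paley--Wiener structure of $H_\varepsilon^1(\mathbb{R}^2)$.

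\textbf{Part (i).} I would work on the Nehari set
\[
\mathcal{N}_\varepsilon=\{\zeta\in H_\varepsilon^1(\mathbb{R}^2)\setminus\{0\}:\diff\mathcal{T}_\varepsilon[\zeta](\zeta)=0\}.
\]
Since $\mathcal{T}_\varepsilon$ is quadratic minus homogeneous quartic plus an $\bigO(\varepsilon^{1/2})$ perturbation that is small relative to the quadratic part, a standard fibration on rays assigns every nonzero $\zeta$ a unique positive rescaling onto $\mathcal{N}_\varepsilon$, shows $c_\varepsilon:=\inf_{\mathcal{N}_\varepsilon}\mathcal{T}_\varepsilon>0$, and identifies $\mathcal{T}_\varepsilon|_{\mathcal{N}_\varepsilon}$ with essentially $\tfrac{1}{4}$ of the quadratic form, which is coercive in $H^1$. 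Ekeland's variational principle then yields a Palais--Smale minimising sequence $\{\zeta_n\}\subset\mathcal{N}_\varepsilon$ with a uniform $H^1$-bound and $\diff\mathcal{T}_\varepsilon[\zeta_n]\to 0$. I would rule out vanishing by concentration--compactness and translate $\zeta_n$ so that the mass stays in a fixed compact set. Because $\widehat{\zeta_n}$ is supported in the fixed disc $B_{\delta/\varepsilon}(0)$, elements of $H_\varepsilon^1(\mathbb{R}^2)$ are automatically smooth, and a Rellich-type argument on bounded domains combined with a tail estimate yields strong $L^\infty$-convergence to the weak $H^1$-limit $\zeta_\infty$. Weak lower semicontinuity then forces $\zeta_\infty\in\mathcal{N}_\varepsilon$ and $\mathcal{T}_\varepsilon(\zeta_\infty)=c_\varepsilon$, so $\zeta_\infty$ is a ground state.

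\textbf{Part (ii).} Given $\{\zeta_n\}$ from (i), define $u_1(\zeta):=\re(\varepsilon\zeta(\varepsilon\,\cdot)\exp(\mathrm{i}\omega_0 x))\in X_1$ and let $u_2(u_1)$ be the correction obtained by Banach's fixed-point theorem applied to the $X_2$-component of the Euler--Lagrange equation as described in the methodology. Setting $u(\zeta):=u_1(\zeta)+u_2(u_1(\zeta))$, the reduction identity $\varepsilon^{-2}\tilde{\mathcal{I}}_\varepsilon(u_1(\zeta))=\mathcal{T}_\varepsilon(\zeta)$ guarantees that a critical point $\zeta_\infty$ of $\mathcal{T}_\varepsilon$ corresponds to a critical point $u_\infty=u(\zeta_\infty)$ of $\mathcal{I}_\varepsilon$, which is nontrivial because $u_1(\zeta_\infty)\ne 0$ dominates the correction $u_2$ in norm. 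Continuity of $\zeta\mapsto u_1(\zeta)$ combined with the Lipschitz control of $u_2$ in $X$ transfers weak $H^1$-convergence of $\{\zeta_n\}$ to weak $X$-convergence of $\{u(\zeta_n)\}$; the continuous embedding $X\hookrightarrow L^\infty(\mathbb{R}^2)$ combined with the smallness estimate on $u_2(u_1)$ then gives strong $L^\infty$-convergence.

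\textbf{Part (iii).} For $\varepsilon_n\downarrow 0$ let $\zeta^{\varepsilon_n}$ be ground states of $\mathcal{T}_{\varepsilon_n}$. A crucial step is $c_{\varepsilon_n}\to c_0$: testing $\mathcal{T}_{\varepsilon_n}$ on the Nehari rescaling of a fixed ground state of $\mathcal{T}_0$ gives $\limsup c_{\varepsilon_n}\le c_0$, while the reverse inequality follows by passing to limits after translation. The Nehari identity provides uniform $H^1$-bounds on $\{\zeta^{\varepsilon_n}\}$; after translating to avoid vanishing one extracts a weak $H^1$-limit $\zeta^\star\ne 0$. Passing to the limit in $\diff\mathcal{T}_{\varepsilon_n}[\zeta^{\varepsilon_n}]=0$ by means of the perturbation bound $\mathcal{T}_\varepsilon-\mathcal{T}_0=\bigO(\varepsilon^{1/2})$ and continuity of the DS terms yields $\diff\mathcal{T}_0[\zeta^\star]=0$ with $\mathcal{T}_0(\zeta^\star)=c_0$, so $\zeta^\star$ is a ground state of $\mathcal{T}_0$. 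Since $H^1(\mathbb{R}^2)$ is a Hilbert space, weak convergence together with the convergence of the (equivalent) norms encoded in the energy equality upgrades to strong $H^1$-convergence, completing (iii).
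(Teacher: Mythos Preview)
Your outline captures the overall architecture correctly, but there is a genuine gap in parts (i)--(ii) for $\varepsilon>0$ that the paper singles out and works around.

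The difficulty is the remainder $\mathcal{E}_\varepsilon = \mathcal{T}_\varepsilon - \mathcal{T}_0$. This term is defined only implicitly through the fixed-point map $u_1\mapsto u_\mathrm{c}(u_1)$, and the paper states explicitly that neither $\mathcal{E}_\varepsilon(\zeta_n)\to\mathcal{E}_\varepsilon(\zeta_\infty)$ nor $\diff\mathcal{E}_\varepsilon[\zeta_n](w)\to\diff\mathcal{E}_\varepsilon[\zeta_\infty](w)$ has been (or can easily be) established under weak $H^1$-convergence. Your argument ``weak lower semicontinuity then forces $\zeta_\infty\in\mathcal{N}_\varepsilon$ and $\mathcal{T}_\varepsilon(\zeta_\infty)=c_\varepsilon$'' therefore does not go through: you cannot pass to the limit in $\diff\mathcal{T}_\varepsilon[\zeta_n]=0$ to get criticality of $\zeta_\infty$, nor in $\mathcal{T}_\varepsilon(\zeta_n)\to c_\varepsilon$ to get minimality. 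The paper's remedy is to \emph{transfer both limits to the FDKP functional} $\mathcal{I}_\varepsilon$, which has no opaque remainder. One shows that $u\mapsto \varepsilon^2 u+n(\Diff)u+u^2$ is weakly continuous $X\to Z$, so $\diff\mathcal{I}_\varepsilon[u_n](w)\to\diff\mathcal{I}_\varepsilon[u_\infty](w)$ and hence $u_\infty$ is critical; pulling back through the reduction gives $\diff\mathcal{T}_\varepsilon[\zeta_\infty]=0$. For minimality one first upgrades to $u_n\to u_\infty$ in $L^\infty$, then uses the identity $\mathcal{I}_\varepsilon(u)=\tfrac12\diff\mathcal{I}_\varepsilon[u](u)-\tfrac16\int u^3$ to obtain $\mathcal{I}_\varepsilon(u_n)\to\mathcal{I}_\varepsilon(u_\infty)$ and hence $\mathcal{T}_\varepsilon(\zeta_\infty)=c_\varepsilon$.

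A second, related issue: in part (ii) you write that ``the continuous embedding $X\hookrightarrow L^\infty(\mathbb{R}^2)$ \ldots\ then gives strong $L^\infty$-convergence''. That embedding is not compact, so weak $X$-convergence of $u(\zeta_n)$ does not by itself yield $L^\infty$-convergence. The paper instead uses the criterion that $u_n\to u_\infty$ in $L^\infty$ if and only if $u_n(\cdot-j_n)\rightharpoonup 0$ in $H^s$ for every unbounded $\{j_n\}\subset\mathbb{Z}^2$; this is verified by first showing (via concentration--compactness, ruling out \emph{dichotomy} and not merely vanishing, which your sketch elides) that $\sup_j|\zeta_n-\zeta_\infty|_{L^2(Q_j)}\to 0$, and then pushing this through the reduction map using the weak continuity of the fixed-point operator $G$. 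Part (iii) of your plan is essentially the paper's argument.
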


\section{Function spaces}\label{sec:function spaces}
Let  \(\Schwartz(\R^2)\) be the Schwarz space of smooth, rapidly decaying functions, and let \(\partial_x \Schwartz(\R^2) = \{ \partial_x \varphi \colon \varphi \in \Schwartz\}\) be the space of \(x\)-derivatives of such functions. The energy space \(Y\) for the FDKP functional \(\mathcal{I}_\varepsilon\) is the completion of $\partial_x\mathcal{S}(\mathbb{R}^2)$ with respect to the norm 
\begin{equation*}
\norm{u}_Y^2=\int_{\mathbb{R}^2}\left(1+\frac{\abs{k_2}}{\abs{k_1}}+\frac{\abs{k}^\frac{3}{2}}{\abs{k_1}}\right)\abs{\hat{u}(k)}^2 \dk,
\end{equation*}
while the energy space for the DS functional \(\mathcal{T}_0\) is the standard Sobolev space \(H^1(\mathbb{R}^2)\).
We shall need a smoother subspace \(X\) of \(Y\) with the property that \(m(\Diff)X \hookrightarrow L^2(\R^2)\) and consider
the image of \(X\) under \(m(\Diff)\) as a separate space. To that aim, we introduce
the completions $X$ of $\partial_x\mathcal{S}(\mathbb{R}^2)$ and $Z \cong m(\Diff) X$ of $\mathcal{S}(\mathbb{R}^2)$ with respect to the norms 
\begin{align}
\norm{u}_X^2&=\int_{\mathbb{R}^2}\left(1+\frac{k_2^2}{k_1^2}+\frac{k_2^4}{k_1^2}+\abs{k}^{2s}\right)\abs{\hat{u}(k)}^2 \dk,\label{normx}\\
\norm{u}_Z^2&=\int_{\mathbb{R}^2}(1+\abs{k}+k_1^2\abs{k}^{2s-3})\abs{\hat{u}(k)}^2 \dk,\label{normz}
\end{align}
where \(s > \frac{3}{2}\) can be chosen arbitrarily. All function spaces in this article should be considered complex-valued. (While the FDKP solution \(u\) is real, the corresponding DS solution \(\zeta\) is in general complex-valued, as are some of the Fourier transforms appearing throughout the paper.)


\begin{lemma}\label{lemma:embeddings}
One has the continuous embeddings 
\begin{align*}
&X\hookrightarrow Y\hookrightarrow L^2(\mathbb{R}^2)\\
&X\hookrightarrow H^s(\mathbb{R}^2) \hookrightarrow H^{s-\frac{1}{2}}(\mathbb{R}^2)\hookrightarrow Z\hookrightarrow L^2(\mathbb{R}^2),\\
\intertext{and}
&\qquad \qquad \:\, m(\Diff)X \hookrightarrow Z,\\ 
&\qquad \qquad\:\,  X \cdot C^{m}(\R^2) \hookrightarrow Z,
\end{align*}
for any integer $m$ with \(m > s - \frac{1}{2}\). 
\end{lemma}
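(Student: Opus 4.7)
The plan is to reduce every embedding (except the last) to a pointwise inequality between the Fourier weights and then invoke Plancherel. The embeddings into $L^2(\R^2)$ are immediate, since each of the weights defining $X$, $Y$ and $Z$ is bounded below by $1$; the inclusion $X\hookrightarrow H^s(\R^2)$ follows from the $\abs{k}^{2s}$ term in \eqref{normx}; and $H^s(\R^2)\hookrightarrow H^{s-\frac{1}{2}}(\R^2)$ is standard. For $H^{s-\frac{1}{2}}(\R^2)\hookrightarrow Z$, since $s>\tfrac{3}{2}$ one has $s-\tfrac{1}{2}>1$ and $2s-3>0$, so $(1+\abs{k}^2)^{s-\frac{1}{2}}$ pointwise dominates each of $1$, $\abs{k}$ and $k_1^2\abs{k}^{2s-3}\le\abs{k}^{2s-1}$.

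For $X\hookrightarrow Y$ the goal is to verify the pointwise bound
\[
1+\frac{\abs{k_2}}{\abs{k_1}}+\frac{\abs{k}^{3/2}}{\abs{k_1}}\lesssim 1+\frac{k_2^2}{k_1^2}+\frac{k_2^4}{k_1^2}+\abs{k}^{2s}.
\]
The middle term on the left is handled by AM--GM, $\abs{k_2}/\abs{k_1}\le\tfrac{1}{2}(1+k_2^2/k_1^2)$. For the last term I would split $\abs{k}^{3/2}\le\sqrt{2}(\abs{k_1}^{3/2}+\abs{k_2}^{3/2})$ and treat each contribution separately: $\abs{k_1}^{1/2}\le 1+\abs{k}^{2s}$ (valid since $2s>1$), and $\abs{k_2}^{3/2}/\abs{k_1}=\abs{k_2}^{1/2}\cdot\abs{k_2}/\abs{k_1}\le\tfrac{1}{2}(\abs{k_2}+k_2^2/k_1^2)$ by AM--GM, each of which is absorbed by the $X$-weight.

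The main technical point is $m(\Diff)X\hookrightarrow Z$. Using $\tanh r\le\min\{r,1\}$ gives $(1+\abs{k}^2)\tanh\abs{k}/\abs{k}\lesssim 1+\abs{k}$, so that $\abs{m(k)}^2\lesssim (1+\abs{k})(1+k_2^2/k_1^2)$. It then remains to establish
\[
(1+\abs{k})(1+k_2^2/k_1^2)(1+\abs{k}+k_1^2\abs{k}^{2s-3})\lesssim 1+k_2^2/k_1^2+k_2^4/k_1^2+\abs{k}^{2s},
\]
which I would do by splitting into the regions $\abs{k}\le 1$ and $\abs{k}\ge 1$. In the low-frequency region all factors involving $\abs{k}$ are bounded (noting $k_1^2\abs{k}^{2s-3}\le\abs{k}^{2s-1}\le 1$), so the left-hand side is $\lesssim 1+k_2^2/k_1^2$. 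In the high-frequency region, expansion yields the terms $\abs{k}^2$, $k_1^2\abs{k}^{2s-2}$ and $k_2^2\abs{k}^{2s-2}$, each absorbed into $\abs{k}^{2s}$ using $s>\tfrac{3}{2}$, together with the delicate cross-term $k_2^2\abs{k}^2/k_1^2=k_2^2+k_2^4/k_1^2\le\abs{k}^{2s}+k_2^4/k_1^2$.

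Finally, $X\cdot C^m(\R^2)\hookrightarrow Z$ follows by composing $X\hookrightarrow H^s(\R^2)\hookrightarrow H^{s-\frac{1}{2}}(\R^2)$ with the standard product estimate $\norm{fu}_{H^{s-\frac{1}{2}}}\lesssim\norm{f}_{C^m}\norm{u}_{H^{s-\frac{1}{2}}}$, valid for any integer $m>s-\tfrac{1}{2}$, and the embedding $H^{s-\frac{1}{2}}(\R^2)\hookrightarrow Z$ established above. The hardest step is the case analysis for $m(\Diff)X\hookrightarrow Z$, since the weights in $X$ and $Z$ encode different singular behaviour along $\{k_1=0\}$, and one must be careful to check that the combined Fourier growth of $m$ and $Z$ does not exceed the $k_2^4/k_1^2$ budget in $X$.
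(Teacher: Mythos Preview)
Your proof is correct. The bilinear estimate $X\cdot C^m(\R^2)\hookrightarrow Z$ is handled exactly as in the paper, via the chain $Z\hookleftarrow H^{s-\frac{1}{2}}\hookleftarrow H^s\hookleftarrow X$ together with the standard $H^{s-\frac{1}{2}}\cdot C^m$ product estimate; for the remaining embeddings the paper simply cites Ehrnstr\"om and Groves \cite{EhrnstroemGroves18}, whereas you supply the explicit pointwise Fourier-weight comparisons (which are indeed the natural arguments and match what that reference does).
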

\begin{proof}
The bilinear estimate is obtained by the calculation
\begin{align*}
\norm{uv}_{Z} \lesssim \norm{uv}_{H^{s-\frac{1}{2}}} \lesssim \norm{u}_{H^{s-\frac{1}{2}}} \norm{v}_{C^{m}} \lesssim \norm{u}_{H^{s}} \norm{v}_{C^{m}} \lesssim \norm{u}_X\norm{v}_{C^{m}} ,
\end{align*}
while the other results were proved by Ehrnstr\"{o}m and Groves \cite{EhrnstroemGroves18} (who
use the same definitions of $X$ and $Y$).
\end{proof}

\begin{remark} \label{rem:XZ algebra}
The calculation
$$\norm{u_1 \cdots u_n}_Z \lesssim \norm{u_1 \cdots u_n}_{H^{s-\frac{1}{2}}}
\lesssim \norm{u_1}_{H^{s-\frac{1}{2}}} \cdots \norm{u_n}_{H^{s-\frac{1}{2}}}
\lesssim \norm{u_1}_X\cdots\norm{u_n}_X$$
yields the useful estimate
$$\norm{u_1 \cdots u_n}_Z \lesssim \norm{u_1}_X\cdots\norm{u_n}_X$$
for $u_1$, \ldots, $u_n \in X$.
\end{remark}

\begin{corollary} \label{EL mapping}
The formula $u \mapsto -\varepsilon u +n(\Diff)u+ u^2$ defines a smooth and weakly continuous mapping $X \rightarrow Z$.
\end{corollary}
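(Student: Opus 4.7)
The plan is to decompose $u \mapsto -\varepsilon u + n(\Diff)u + u^2$ into its linear and quadratic parts and verify the two properties separately. The linear part is $-\varepsilon u + n(\Diff)u = m(\Diff)u - (c_0+\varepsilon)u$. Lemma~\ref{lemma:embeddings} gives $m(\Diff)X \hookrightarrow Z$, while the scalar-multiplication terms are handled by the chain $X \hookrightarrow H^s \hookrightarrow H^{s-\frac{1}{2}} \hookrightarrow Z$ from the same lemma. A bounded linear operator between Banach spaces is automatically smooth and (weak-to-weak) continuous, so this part requires no further work.

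For the nonlinearity $u \mapsto u^2$, the bilinear estimate in Remark~\ref{rem:XZ algebra} (applied with $n=2$) says that $(u,v)\mapsto uv$ is bounded $X\times X \to Z$. The map $u\mapsto u^2$ is therefore a continuous homogeneous polynomial of degree two, with Fr\'echet derivatives $\diff P[u](v)=2uv$ and $\diff^2 P[u](v,w)=2vw$ and vanishing higher-order derivatives; continuity of each of these in the appropriate operator norm is immediate from the same bilinear estimate. This gives smoothness $X \to Z$.

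The main obstacle is weak continuity of the quadratic part. Let $u_n \rightharpoonup u$ in $X$; by Banach--Steinhaus, $\{u_n\}$ is bounded in $X$, hence also in $H^s(\R^2)$ via $X\hookrightarrow H^s$ with $s>\tfrac{3}{2}$. By Rellich--Kondrachov together with the Sobolev embedding $H^s(\R^2)\hookrightarrow C^0(\R^2)$ for $s>1$, every subsequence admits a further subsequence converging to $u$ strongly in $L^\infty_{\mathrm{loc}}(\R^2)$ and $L^2_{\mathrm{loc}}(\R^2)$. Since $Z$ is a weighted-$L^2$ Hilbert space and $C_c^\infty(\R^2)$ is dense in $Z$, it suffices to show that for each $\phi\in C_c^\infty(\R^2)$,
$$
\int_{\R^2}(u_n^2-u^2)\phi = 2\int_{\R^2}u\,(u_n-u)\phi + \int_{\R^2}(u_n-u)^2\phi \longrightarrow 0.
$$
Each term on the right vanishes in the limit by Cauchy--Schwarz together with the compact support of $\phi$ and the local strong convergence $u_n\to u$. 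Since $\{u_n^2\}$ is bounded in $Z$ (Remark~\ref{rem:XZ algebra}) and the limit is uniquely determined, the full sequence satisfies $u_n^2\rightharpoonup u^2$ in $Z$, completing the proof. The technical point is precisely this transfer of weak convergence through the nonlinearity; everything else reduces directly to the embeddings already recorded.
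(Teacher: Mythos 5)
The paper states this corollary without a proof of its own, treating it as an immediate consequence of Lemma~\ref{lemma:embeddings} and Remark~\ref{rem:XZ algebra}; your proposal fills in exactly the argument the authors leave implicit, and the main moves — splitting off the linear part via $n(\Diff)=m(\Diff)-c_0$ and the embeddings $m(\Diff)X\hookrightarrow Z$ and $X\hookrightarrow Z$, using the bilinear estimate of Remark~\ref{rem:XZ algebra} for smoothness of $u\mapsto u^2$, and handling weak continuity of the nonlinearity by local Rellich compactness plus boundedness of $\{u_n^2\}$ in $Z$ — are the standard (and surely intended) route. So the substance is correct.

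One small imprecision worth repairing: you justify ``it suffices to test $\int(u_n^2-u^2)\phi$ for $\phi\in C_c^\infty$'' by density of $C_c^\infty$ in $Z$, but that is not quite the right reason. Testing with $\phi$ via the $L^2$ pairing corresponds to the functional $v\mapsto\langle v,\phi\rangle_{L^2}$, which is an element of $Z^*$ rather than of $Z$; and approximating $\phi$ in the $Z$-norm does not control the relevant pairing. The correct justification is the one you implicitly use in the final sentence: $\{u_n^2\}$ is bounded in the Hilbert space $Z$, so it is weakly sequentially precompact, weak $Z$-convergence implies distributional convergence (since $\mathcal{D}(\R^2)\subset Z^*$), and the distributional limit $u^2$ is unique, so every weak subsequential limit equals $u^2$ and the whole sequence converges weakly. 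If you phrase it that way you can drop the density-of-$C_c^\infty$-in-$Z$ claim altogether. Everything else is fine.
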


In accordance with the principle that there should be solitary-wave solutions of the FDKP equation
whose Fourier transform is concentrated near the frequencies \(\pm \omega_0\), let 
\[
B_\pm = B_\delta(\pm \omega_0, 0) 
\]
be discs of fixed radius \(\delta \in (0,\tfrac{\omega_0}{3} )\) centered at \((k_1,k_2) = (\pm \omega_0,0)\),
denote their characteristic functions by $\chi_\pm$, and set
\begin{equation*}
B = B_+ \cup B_-
\end{equation*}
and $\chi=\chi_++\chi_-$. The orthogonal decomposition 
\begin{equation*}
u_1=\chi(\Diff)u,\qquad u_2=(1-\chi(\Diff))u,
\end{equation*}
for functions \(u \in L^2(\R^2)\) induces the corresponding decomposition $X = X_1\oplus X_2$, where
\begin{equation*}
X_1=\chi(\Diff)X,\quad X_2=(1-\chi(\Diff))X,
\end{equation*}
and similarly for  \(Y\) and $Z$. Note that the radius of \(B_\pm\) is chosen such that the product \(u_1^2\)  belongs to \(X_2\) in spite of \(u_1 \in X_1\). Our strategy consists of reducing the main action of the FDKP equation to the low-frequency space \(X_1\), thereby retrieving the DS equation (after a frequency shift).  We now prove that the operator \(n(\Diff)=m(\Diff)-c_0\) from \eqref{eq:ds} is an isomorphism on \(X_2\), which in turn enables us to formulate the problem entirely in the low-frequency space \(X_1\). Note that \(m(\Diff)\) itself is not an isomorphism on \(X_2\), and \(n(\Diff)\) is not an isomorphism on all of \(X\).  

\begin{lemma}\label{lemma:n-isomorphism}
The mapping $n(\Diff)$ is an isomorphism $X_2\rightarrow Z_2$.
\end{lemma}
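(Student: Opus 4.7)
The plan is to reduce the lemma, via Plancherel, to the pointwise symbol equivalence
\begin{equation*}
(1+\abs{k}+k_1^2\abs{k}^{2s-3})\, n(k)^2 \asymp 1 + \frac{k_2^2}{k_1^2} + \frac{k_2^4}{k_1^2} + \abs{k}^{2s}, \qquad k \in \R^2 \setminus B.
\end{equation*}
The upper half of this two-sided bound gives boundedness of $n(\Diff)\colon X_2 \to Z_2$, while the lower half yields $\norm{u}_X \lesssim \norm{n(\Diff)u}_Z$, hence injectivity together with continuity of the inverse on the range. Surjectivity is then immediate: any $f \in Z_2$ satisfies $\hat f \equiv 0$ on $B$, so $\hat u := \hat f / n$ (extended by zero across $B$) defines an element of $X_2$ by the lower bound, and manifestly $n(\Diff) u = f$.

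For the pointwise equivalence I would exploit the factorisation $m(k) = c(\abs{k})(1 + 2 k_2^2/k_1^2)^{1/2}$, which combined with $c(\abs{k}) \geq c_0$ shows that $n$ vanishes only at $k=(\pm \omega_0, 0)$. Nondegeneracy of the minimum (i.e.\ $c''(\omega_0) > 0$) together with continuity then gives $n(k) \gtrsim 1$ uniformly on every compact subset of $\R^2 \setminus B$, so only the unbounded regimes require estimation. I would split these into three: (a) $\abs{k} \to \infty$ with $\abs{k_1}/\abs{k}$ bounded below, where $n(k) \sim \abs{k}^{1/2}$ from the $(1+\beta\abs{k}^2)^{1/2}(\tanh\abs{k}/\abs{k})^{1/2}$ factor; (b) $\abs{k_2}/\abs{k_1} \to \infty$ with $\abs{k}$ bounded, where the angular factor dominates and $n(k) \sim \abs{k_2}/\abs{k_1}$; and (c) $\abs{k_1}$ bounded and $\abs{k_2} \to \infty$, where $n(k) \sim \abs{k_2}^{3/2}/\abs{k_1}$.

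The main obstacle is regime (c), since both the $X$-weight and the $Z$-weight contain two competing terms. I would split (c) further according to whether $\abs{k_1} \gtrsim \abs{k_2}^{2-s}$ or not: in the first sub-regime the dominant terms are $k_1^2\abs{k_2}^{2s-3}$ on the $Z$-side and $\abs{k_2}^{2s}$ on the $X$-side, while in the second they are $\abs{k_2}$ and $k_2^4/k_1^2$ respectively. In both sub-regimes the ratio of the $X$-weight to the $Z$-weight equals $k_2^3/k_1^2$, matching $n(k)^2$; the exponents fit consistently precisely because $s>3/2$. Regimes (a) and (b), together with the transitional zones between bounded and unbounded $k$, are handled analogously but without any such competition.
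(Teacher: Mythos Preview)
Your approach is correct and leads to the same conclusion, but the execution is considerably more laborious than the paper's. Rather than splitting into asymptotic regimes (a), (b), (c) and verifying the two-sided weight equivalence separately in each, the paper obtains a single uniform lower bound
\[
n(k)\ \gtrsim\ (1+|k|^{1/2})\Big(1+\tfrac{2k_2^2}{k_1^2}\Big)^{1/2}, \qquad k\in\R^2\setminus B,
\]
via a two-region decomposition: the broken annulus $\Sigma=\{k\notin B:\big||k|-\omega_0\big|\le\tfrac{\delta}{2}\}$, where $|k_2|$ is bounded away from zero and the angular factor dominates, and its complement, where $c(|k|)-c_0$ is bounded below. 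Since $(1+2k_2^2/k_1^2)^{1/2}\eqsim|k|/|k_1|$, this single bound feeds directly into one algebraic inequality giving $\big(1+\frac{k_2^2}{k_1^2}+\frac{k_2^4}{k_1^2}+|k|^{2s}\big)n(k)^{-2}\lesssim 1+|k|+k_1^2|k|^{2s-3}$, and boundedness of $n(\Diff)\colon X_2\to Z_2$ is simply cited from the embedding $m(\Diff)X\hookrightarrow Z$. Your regime-by-regime verification recovers the same inequality but with more bookkeeping; the paper's approach buys a cleaner, essentially decomposition-free endgame.

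One small point to watch: your three regimes do not quite cover all unbounded directions---for example $|k_1|\to\infty$ together with $|k_2|/|k_1|\to\infty$ falls outside (a), (b) and (c) as stated. Your analysis of regime~(c), however, uses only $|k_2|\to\infty$ and $|k_2|/|k_1|\to\infty$, not the boundedness of $|k_1|$, so widening~(c) accordingly closes the gap without new ideas.
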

\begin{proof}
According to Lemma~\ref{lemma:embeddings},  \(n(\mathrm{D})\) is bounded \(X_2\rightarrow Z_2\), and we now show that it
has a bounded inverse. Let $\Sigma$ be the (broken) annulus
\[
\Sigma =\{k\not\in B \colon \abs{\abs{k}-\omega_0}\leq \tfrac{\delta}{2}\},
\]
intersecting the bi-disc \(B\) (Figure \ref{fig:annulus}).

\begin{figure}[h]
\centering
\includegraphics[scale=0.67]{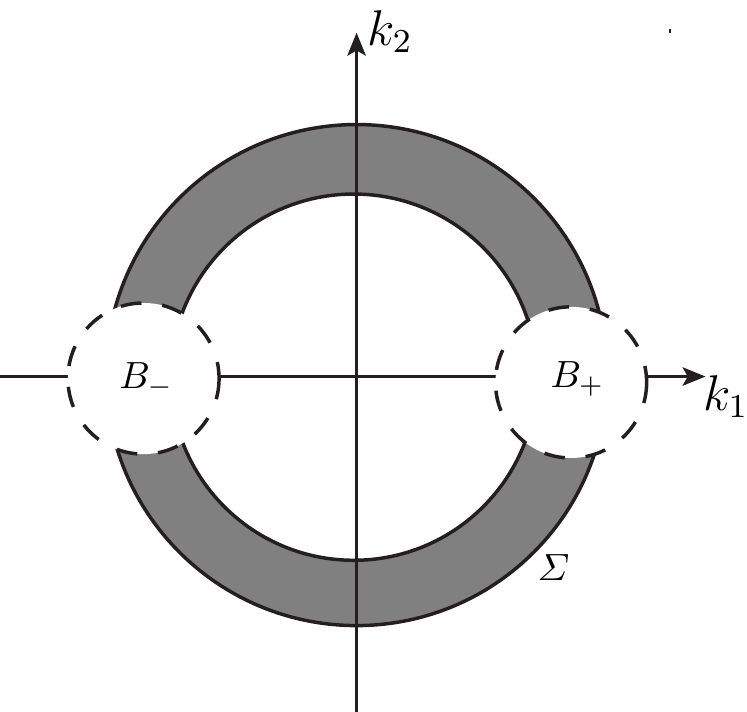}
\caption{The set $\Sigma$ in Fourier space} \label{fig:annulus}
\end{figure}

Using the linear wave speed
\[
c(\omega)=(1+\beta \omega^2)^\frac{1}{2}\left(\frac{\tanh(\omega)}{\omega}\right)^\frac{1}{2}, \qquad \omega \geq 0,
\]
we expand \(n(k)=m(k)-c_0\) as
\begin{align*}
n(k)&=c(\abs{k})\left(1+\frac{2k_2^2}{k_1^2}\right)^\frac{1}{2}-c_0\\
&=(c(\abs{k})-c_0)\left(1+\frac{2k_2^2}{k_1^2}\right)^\frac{1}{2}+c_0\left(\left(1+\frac{2k_2^2}{k_1^2}\right)^\frac{1}{2}-1\right).
\end{align*}
Because \(c_0 = c(\omega_0)\) is the strict and unique minimum of \(c\), and \(c(\omega) \eqsim \sqrt{\omega}\) for large values of \(\omega\), we have that
\begin{equation*}\label{eq:n bounded below}
n(k) \gtrsim \left(1 + |k|^{\frac{1}{2}} \right) \left(1+\frac{2k_2^2}{k_1^2}\right)^\frac{1}{2} 
\gtrsim 1+ |k|^\frac{1}{2} \left(1+\frac{k_2^2}{k_1^2}\right)^\frac{1}{2} = 1+\frac{|k|^{\frac{3}{2}}}{|k_1|}
\end{equation*}
for \(\abs{\abs{k}-\omega_0} > \tfrac{\delta}{2}\).
If on the other hand $k\in \Sigma$, then $\abs{k_2}$ is bounded away from zero and \(|k|\) is bounded from above,
so that again
\[
n(k) \gtrsim \left(1+\frac{2k_2^2}{k_1^2}\right)^\frac{1}{2}-1 \gtrsim  1 + \frac{|k_2|}{|k_1|} \gtrsim  1 + \frac{|k|^{\frac{3}{2}}}{|k_1|}.
\]
A straightforward calculation now shows that
\begin{equation*}
\left(1+\frac{k_2^2}{k_1^2}+\frac{k_2^4}{k_1^2}+\abs{k}^{2s}\right)n(k)^{-2}\lesssim 1+\abs{k}+k_1^2\abs{k}^{2s-3},
\end{equation*}
for $k$ in the complement of \(B\).
\end{proof}

Recall that \(H^1(\R^2)\) is the energy space for the DS equation \eqref{eq:ds}. One could proceed by introducing
the scaled version
$$\norm{v}_\varepsilon^2= \int_{\mathbb{R}^2}(1+\varepsilon^{-2}((\abs{k_1}-\omega_0)^2+k_2^2))\abs{\hat{v}}^2 \dk$$
of the norm for this space, which is comensurate with the DS ansatz since
$\norm{v}_\varepsilon^2 = \tfrac{1}{2}\norm{\zeta}_{H^1}^2$ for
$$v(x,z)= \tfrac{1}{2} \re \zeta(\varepsilon x,\varepsilon z) \exp(\I \omega_0 x);$$
the estimate
\begin{equation}
\norm{u_1}_{C^m}^2 \lesssim \varepsilon^2  \ln (1+\varepsilon^{-2}) \norm{u_1}_\varepsilon
\label{eq:BGW estimate}
\end{equation}
for $u_1 \in X_1$ (see below) enables
the use of fixed-point arguments to solve locally for \(u_2 \in X_2\) in terms of \(u_1 \in X_1\)
when the latter is equipped with $\norm{\cdot}_\varepsilon$. This approach is used by
Buffoni, Groves and Wahl\'{e}n \cite{BuffoniGrovesWahlen18} in their study of the water-wave problem.
In this paper we do not use the scaled norm, taking instead \(L^2(\R^2)\) as our base space, and the analogue of the homogeneous \(\dot H^1(\R^2)\)-norm, namely
\begin{equation*}
\norm{u_1}_{\dot{H}_{\omega_0}^1}=\left(\int_{\mathbb{R}^2}((\abs{k_1}-\omega_0)^2+k_2^2)\abs{\hat{u}_1}^2\ \mathrm{d}k\right)^\frac{1}{2}
\end{equation*}
as the `small quantity'.
The following lemma presents a Gagliardo--Nirenberg interpolation inequality which we use in place of
\eqref{eq:BGW estimate}.

\begin{lemma}\label{lemma:interpolation}
Fix \(\theta\in(0,1)\) and \(m \in \N_0\). The estimate
\[
\norm{u_1}_{C^{m}} \lesssim \norm{\hat{u}_1}_{L^1}
\lesssim
 \norm{u_1}_{L^2}^\theta\norm{u_1}_{\dot{H}_{\omega_0}^1}^{1-\theta}
\]
holds for all $u_1\in X_1$.
\end{lemma}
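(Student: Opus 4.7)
I treat the two inequalities separately. Write $\rho(k) := ((\abs{k_1} - \omega_0)^2 + k_2^2)^{1/2}$, so that by definition $\norm{u_1}_{\dot{H}_{\omega_0}^1}^2 = \int_B \rho(k)^2 \abs{\hat{u}_1(k)}^2 \dk$, and note $0 \le \rho \le \delta$ on $B$ because $\hat{u}_1$ is supported there.

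The first bound $\norm{u_1}_{C^{m}} \lesssim \norm{\hat{u}_1}_{L^1}$ is immediate from Fourier inversion: for each multi-index $\alpha$ with $\abs{\alpha} \le m$ one has
\[
\abs{\partial^\alpha u_1(x)} \lesssim \int_B \abs{k}^{\abs{\alpha}} \abs{\hat{u}_1(k)} \dk \le (\omega_0+\delta)^m \norm{\hat{u}_1}_{L^1},
\]
since the support of $\hat{u}_1$ lies in the bounded set $B$.

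For the second bound the key step is a single Hölder estimate with the weight $\rho^{-(1-\theta)}$. Writing $\abs{\hat{u}_1} = \rho^{-(1-\theta)}\cdot \rho^{1-\theta} \abs{\hat{u}_1}$ and applying Cauchy--Schwarz on $B$,
\[
\int_B \abs{\hat{u}_1} \dk \le \norm{\rho^{-(1-\theta)}}_{L^2(B)} \, \norm{\rho^{1-\theta} \hat{u}_1}_{L^2(B)}.
\]
Passing to polar coordinates centred at $(\pm\omega_0,0)$ shows that $\norm{\rho^{-(1-\theta)}}_{L^2(B)}^2 \lesssim \int_0^\delta r^{2\theta-1} \dr < \infty$, which converges precisely because $\theta > 0$. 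The second factor is handled by log-convexity of $L^p$ norms (Hölder with conjugate exponents $1/(1-\theta)$ and $1/\theta$):
\[
\norm{\rho^{1-\theta} \hat{u}_1}_{L^2(B)} = \norm{(\rho\abs{\hat{u}_1})^{1-\theta} \abs{\hat{u}_1}^{\theta}}_{L^2(B)} \le \norm{\rho \hat{u}_1}_{L^2(B)}^{1-\theta} \norm{\hat{u}_1}_{L^2(B)}^{\theta}.
\]
Plancherel's theorem converts these two factors into $\norm{u_1}_{\dot{H}_{\omega_0}^1}^{1-\theta}$ and $\norm{u_1}_{L^2}^{\theta}$, respectively, which gives the claim.

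There is no real obstacle in the argument; the one critical observation is that the weight $\rho^{-(1-\theta)}$ is square-integrable near the two Fourier-space centres $(\pm\omega_0,0)$ exactly when $\theta > 0$. This is what eliminates the logarithmic factor $\ln(1+\varepsilon^{-2})$ appearing in the Buffoni--Groves--Wahl\'en estimate \eqref{eq:BGW estimate}, and explains why the exponent $\theta$ must lie strictly inside $(0,1)$.
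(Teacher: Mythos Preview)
Your proof is correct, and is in fact more direct than the paper's argument. The paper introduces the scaled norm
\[
\norm{u_1}_\sigma^2 = \int_{\R^2}\bigl(1+\sigma^{-2}((\abs{k_1}-\omega_0)^2+k_2^2)\bigr)\abs{\hat{u}_1}^2 \dk,
\]
applies Cauchy--Schwarz with weight $(1+\sigma^{-2}\rho^2)^{-1/2}$ to obtain $\norm{\hat{u}_1}_{L^1}^2 \lesssim \sigma^2\ln(1+\sigma^{-2})\norm{u_1}_\sigma^2 \lesssim \sigma^{2(1-\theta)}\norm{u_1}_\sigma^2$, and then optimises over $\sigma$ to reach the interpolation bound. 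Your choice of the homogeneous weight $\rho^{-(1-\theta)}$ followed by a single H\"older step bypasses both the logarithm and the optimisation; the role played by tuning $\sigma$ in the paper is absorbed into the H\"older interpolation between $\norm{\rho\hat{u}_1}_{L^2}$ and $\norm{\hat{u}_1}_{L^2}$. The paper's route has the minor advantage of making explicit the connection with the Buffoni--Groves--Wahl\'en estimate \eqref{eq:BGW estimate} (which is the case $\sigma=\varepsilon$ before the logarithm is discarded), while yours is shorter and shows transparently why the condition $\theta>0$ is exactly the integrability threshold for $\rho^{-2(1-\theta)}$ near the two Fourier centres.
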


\begin{proof}
Introduce the scaled $H^1(\R^2)$-norm
\begin{equation*}
\norm{u_1}_\sigma^2= \int_{\mathbb{R}^2}(1+\sigma^{-2}((\abs{k_1}-\omega)^2+k_2^2))\abs{\hat{u}_1}^2 \dk
\end{equation*}
for functions $u_1\in X_1$ and parameter values \(\sigma > 0\). Since \(\hat u_1\) is compactly supported, we find that
\begin{align}
\norm{u_1}_{C^{m}}^2 &\lesssim \norm{\hat{u}}_{L^1}^2 \nonumber \\
&\leq \norm{u_1}_\sigma^2 \int_B \frac{\dk}{1+\sigma^{-2}((\abs{k_1}-\omega)^2+k_2^2)}  \nonumber \\
&\lesssim \sigma^2 \ln\left(1+\sigma^{-2}\right) \norm{u_1}_\sigma^2 \nonumber \\
&\lesssim \sigma^{2(1-\theta)}\norm{u_1}_\sigma^2. \label{infty-est}
\end{align}
Next note that 
\[
\sigma^{1-\theta}\norm{u_1}_\sigma\eqsim \sigma^{1-\theta}\norm{u_1}_{L^2}+\sigma^{-\theta}\norm{u_1}_{\dot{H}_{\omega_0}^1},
\]
and we now adjust $\sigma$ to the specific function $u_1$ to which the inequality is applied.
The function defined by the right-hand side of this inequality attains its global minimum at
\begin{equation*}
\sigma=\frac{\theta}{1-\theta}\frac{\norm{u_1}_{\dot{H}_{\omega_0}^1}}{\norm{u_1}_{L^2}},
\end{equation*}
where it takes the value
\begin{equation*}
\left(\tfrac{\theta}{1-\theta}\right)^{1-\theta}\norm{u_1}_{L^2}^\theta\norm{u_1}_{\dot{H}_{\omega_0}^1}^{1-\theta}+\left(\tfrac{\theta}{1-\theta}\right)^{-\theta}\norm{u_1}_{L^2}^\theta\norm{u_1}_{\dot{H}_{\omega_0}^1}^{1-\theta}.
\end{equation*}
Hence, for this choice of \(\sigma\), we find from \eqref{infty-est} that
\[\label{eq:linear interpolation}
\norm{u_1}_{C^{m}}\lesssim \norm{u_1}_{L^2}^\theta\norm{u_1}_{\dot{H}_{\omega_0}^1}^{1-\theta}.
\qedhere
\]
\end{proof}

\begin{remark} \label{rem:norms the same}
Note that $\norm{u_1}_{L^2} \eqsim \norm{u_1}_{\dot{H}_{\omega_0}^1} \eqsim \norm{u_1}_Z \eqsim \norm{u_1}_X$
for all $u_1 \in X_1$ because $|k|$ is bounded above and $|k_1|$ is bounded away from zero for
$k \in B$.
\end{remark}

\section{Variational reduction}\label{sec:reduction}
Having introduced  \(n(k)=m(k)-c_0\), one can write the steady FDKP equation \eqref{t-fdkp} as
\begin{equation}\label{t-fdkp-n}
\epsilon^2u+n(\Diff)u+u^2 = 0,
\end{equation}
and project it onto $Z_1$ and $Z_2$ using the characteristic function \(\chi\) introduced in Section~\ref{sec:function spaces}, so that
\begin{align}
\epsilon^2u_1+n(\Diff)u_1+\chi(\Diff)(u_1+u_2)^2&=0, \qquad\text{ in } Z_1, \label{eq:split 1} \\
\epsilon^2u_2+n(\Diff)u_2+(1-\chi(\Diff))(u_1+u_2)^2&=0, \qquad\text{ in } Z_2. \label{eq:split 2}
\end{align}
Our strategy is to first solve \eqref{eq:split 2} for $u_2 \in X_2$ as a function of $u_1 \in X_1$ using the following version of the contraction-mapping principle.

\begin{lemma} \label{lemma:fixedpoint}
Let \(\overline{V_1} \subset W_1\) be the closure of an open, convex, bounded neighbourhood of the origin in a Banach space $W_1$ and $r$ be a continuous function $\overline{V_1} \rightarrow [0,\infty)$. Let furthermore $F\colon \overline{V_1} \times W_2\rightarrow W_2$ be a smooth function into a Banach space \(W_2\), satisfying
\begin{equation*}
\norm{F(w_1,0)}_{W_2}\leq \tfrac{1}{2}r(w_1),\qquad \norm{\mathrm{d}_2F[w_1,w_2]}_{W_2\rightarrow W_2}\leq \tfrac{1}{3}
\end{equation*}
for all $(w_1,w_2)\in \overline{V_1} \times\overline{B}_{r(w_1)}(0)$. The fixed-point equation 
\begin{equation*}
w_2=F(w_1,w_2)
\end{equation*}
admits a smooth solution map 
\[
\overline V_1 \ni w_1 \mapsto w_2 \in \overline{B}_{r(w_1)}(0)
\]
with the properties that
\begin{align*}
\norm{\mathrm{d}w_2(w_1^1)}_{W_2} &\lesssim \norm{\mathrm{d}_1F(w_1^1)}_{W_1},\\
\norm{\mathrm{d}^2w_2(w_1^1,w_1^2)}_{W_2}&\lesssim \norm{\mathrm{d}_1^2F(w_1^1,w_1^2)}_{W_1} +\norm{\mathrm{d}_1\mathrm{d}_2F(w_1^1,\mathrm{d}w_2(w_1^2))}_{W_1}\\
&\qquad \mbox{}+\norm{\mathrm{d}_1\mathrm{d}_2F(w_1^2,\mathrm{d}w_2(w_1^1))}_{W_1}\\
&\qquad \mbox{}+\norm{\mathrm{d}_2^2F(\mathrm{d}w_2(w_1^1),\mathrm{d}w_2(w_1^2))}_{W_1},
\end{align*}
where the respective locations \(w_1\) and \((w_1,w_2(w_1))\) of the derivatives of \(w_2\) and \(F\) are implicitly assumed, and \(w_1^1,w_1^2\) are free directions in \(W_1\).
\end{lemma}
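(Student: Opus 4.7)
The plan is to apply the Banach fixed-point theorem pointwise in $w_1$ to obtain the solution map, to invoke the implicit function theorem for smoothness, and finally to differentiate the identity $w_2(w_1)=F(w_1,w_2(w_1))$ twice in order to read off the stated derivative bounds.

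First I would fix $w_1\in\overline{V_1}$ and verify that $F(w_1,\cdot)$ is a strict contraction of $\overline{B}_{r(w_1)}(0)$ into itself. The mean-value inequality together with the hypothesis $\|\mathrm{d}_2F\|\leq \tfrac{1}{3}$ gives
$$\|F(w_1,w_2)\|_{W_2}\leq \|F(w_1,0)\|_{W_2}+\tfrac{1}{3}\|w_2\|_{W_2}\leq \tfrac{1}{2}r(w_1)+\tfrac{1}{3}r(w_1)\leq r(w_1),$$
so the ball is invariant, while the same Lipschitz bound makes $F(w_1,\cdot)$ a $\tfrac{1}{3}$-contraction. Banach's theorem then produces a unique $w_2(w_1)\in \overline{B}_{r(w_1)}(0)$ satisfying $w_2(w_1)=F(w_1,w_2(w_1))$.

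For smoothness, I would recast the problem as $G(w_1,w_2):=w_2-F(w_1,w_2)=0$. The operator $\mathrm{d}_2G=I-\mathrm{d}_2F$ is invertible on $W_2$ by a Neumann series argument, with $\|(\mathrm{d}_2G)^{-1}\|_{W_2\rightarrow W_2}\leq\tfrac{3}{2}$. The implicit function theorem therefore yields smooth dependence $w_1\mapsto w_2(w_1)$ on the interior of $V_1$, which extends continuously to $\overline{V_1}$ in view of the uniform contraction constant.

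Finally, differentiating $w_2(w_1)=F(w_1,w_2(w_1))$ in a direction $w_1^1\in W_1$ yields $(I-\mathrm{d}_2F)\,\mathrm{d}w_2(w_1^1)=\mathrm{d}_1F(w_1^1)$, whence the first-order estimate follows by applying $(I-\mathrm{d}_2F)^{-1}$ and its uniform bound. Differentiating once more in a direction $w_1^2\in W_1$, using the chain rule on the arguments $(w_1,w_2(w_1))$ appearing in each partial derivative of $F$, and isolating the term containing $\mathrm{d}^2w_2$, one obtains
\begin{align*}
(I-\mathrm{d}_2F)\,\mathrm{d}^2w_2(w_1^1,w_1^2)&=\mathrm{d}_1^2F(w_1^1,w_1^2)+\mathrm{d}_1\mathrm{d}_2F(w_1^1,\mathrm{d}w_2(w_1^2))\\
&\quad+\mathrm{d}_1\mathrm{d}_2F(w_1^2,\mathrm{d}w_2(w_1^1))+\mathrm{d}_2^2F(\mathrm{d}w_2(w_1^1),\mathrm{d}w_2(w_1^2)),
\end{align*}
and a second application of $\|(I-\mathrm{d}_2F)^{-1}\|\leq \tfrac{3}{2}$ produces the stated second-order bound. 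The only point requiring care is the bookkeeping in the chain rule for the mixed terms; no substantive obstacle is expected, as this is a textbook parameterised contraction argument.
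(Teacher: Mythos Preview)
The paper states this lemma without proof, treating it as a standard parameterised contraction-mapping result. Your argument is correct and is precisely the expected one: Banach's fixed-point theorem for existence, invertibility of $I-\mathrm{d}_2F$ via the Neumann series for smoothness, and differentiation of the fixed-point identity for the derivative bounds. The only slightly loose point is the passage from smoothness on the interior of $V_1$ to smoothness on $\overline{V_1}$; since $F$ is assumed smooth on $\overline{V_1}\times W_2$ and the bound $\|(I-\mathrm{d}_2F)^{-1}\|\leq\tfrac{3}{2}$ is uniform, the formulae you derive for $\mathrm{d}w_2$ and $\mathrm{d}^2w_2$ make sense and remain bounded up to the boundary, which is all that is actually used downstream.
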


To compute the reduced equation for $u_1$ we need an explicit formula for the quadratic, $\varepsilon$-independent part of $u_2(u_1)$, which is evidently given by
\begin{equation*}
u_\mathrm{q}(u_1)=-\frac{1-\chi(\Diff)}{n(\Diff)} u_1^2=-n(\Diff)^{-1} u_1^2
\end{equation*}
because $\chi(\Diff)u_1^2=0$ in view of our choice \(\delta < \frac{\omega_0}{3}\) for the radius of the discs \(B_\pm\).
It is convenient to write $u_2=u_\mathrm{q}(u_1)+u_\mathrm{c}$ already at this stage, and
formulate \eqref{eq:split 2} as the fixed-point equation
\begin{equation}\label{eq:split G}
u_\mathrm{c}=G(u_1,u_\mathrm{c}),
\end{equation}
where
\begin{align*}
G(u_1,u_\mathrm{c})
\!\!=\!-\frac{1-\chi(\Diff)}{n(\Diff)}\!\!\left[2u_1(u_\mathrm{q}(u_1)+u_\mathrm{c})+(u_\mathrm{q}(u_1)+u_\mathrm{c})^2+\varepsilon^2(u_\mathrm{q}(u_1)+u_\mathrm{c})\right]
\end{align*}
is a smooth and weakly continuous mapping $X_1 \times X_2 \rightarrow X_2$.

\begin{remark} \label{rem:q estimate}
Using Lemmata~\ref{lemma:embeddings}--\ref{lemma:interpolation}, Remark \ref{rem:norms the same}
and the fact that\linebreak
$\mathrm{d}^2u_\mathrm{q}[u_1,u_1](u_1)=\mathrm{d}u_\mathrm{q}[u_1](u_1)=2u_\mathrm{q}(u_1)$, we find that
\[
\norm{u_\mathrm{q}(u_1)}_{X} + \norm{\mathrm{d} u_\mathrm{q}[u_1](u_1)}_{X} + \norm{\mathrm{d}^2 u_\mathrm{q}[u_1](u_1,u_1)}_{X} \lesssim |u_1|_{C^m} |u_1|_X \lesssim |u_1|_{L^2}^{1+\theta} |u_1|_{\dot H^1_\omega}^{1-\theta}.
\]
\end{remark}

We now solve \eqref{eq:split G} in \(X_2\) by applying Lemma~\ref{lemma:fixedpoint} with $W_1=X_1$, $W_2=X_2$
and $V_1$ a closed, convex subset of a fixed ball in $X_1$; Lemma~\ref{lemma:n-isomorphism} ensures that the $X$-norm of
$G(u_1,u_\mathrm{c})$ can be estimated by the $Z$-norm of the expression in square brackets on the right-hand side of the above formula (and similarly for derivatives). Here \(\Lambda > 0\) is a large, but fixed, parameter, whose value can be comfortably set later.

\begin{lemma}\label{lemma:reduction}
Let $U_1=\{u_1\in \overline{B}_\Lambda(0) \colon \norm{u_1}_{\dot{H}_{\omega_0}^1} \leq \varepsilon\Lambda\}$, where \(\overline{B}_\Lambda(0)\) is the closed ball of radius \(\Lambda\) in \(X_1\). Equation \eqref{eq:split G} defines a smooth solution
map
\[
U_1 \ni u_1 \mapsto u_\mathrm{c} \in X_2
\]
which satisfies
\begin{align*}
\norm{u_\mathrm{c}(u_1)}_{X} + \norm{\mathrm{d} u_\mathrm{c}[u_1](u_1)}_{X} + \norm{\mathrm{d}^2 u_\mathrm{c}[u_1](u_1,u_1)}_{X} & \lesssim  \varepsilon^{1-\theta} \norm{u_1}_{L^2}^{1+ \theta} \norm{u_1}_{\dot{H}_{\omega_0}^1}^{1-\theta}.
\end{align*}
\end{lemma}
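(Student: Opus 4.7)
The plan is to apply Lemma~\ref{lemma:fixedpoint} with $W_1=X_1$, $W_2=X_2$, $F=G$, an open convex neighbourhood $V_1$ of $U_1$ in $X_1$ (e.g., the intersection of $B_{2\Lambda}(0) \subset X_1$ with $\{\norm{u_1}_{\dot{H}_{\omega_0}^1}<2\varepsilon\Lambda\}$), and the continuous radius function
\[
r(u_1)=C_0\varepsilon^{1-\theta}\norm{u_1}_{L^2}^{1+\theta}\norm{u_1}_{\dot{H}_{\omega_0}^1}^{1-\theta},
\]
where $C_0$ is a sufficiently large constant depending only on $\Lambda$. The bound on $u_\mathrm{c}(u_1)$ is then immediate from $\norm{u_\mathrm{c}(u_1)}_X\le r(u_1)$, while the derivative bounds come from the second part of Lemma~\ref{lemma:fixedpoint}.

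The principal analytic ingredient is that Lemma~\ref{lemma:n-isomorphism} lets me control the $X$-norm of $G$ (and of its derivatives) by the $Z$-norm of the expression in square brackets. To verify $\norm{G(u_1,0)}_X\le\tfrac12 r(u_1)$, I would estimate the crossterm via
\[
\norm{u_1u_\mathrm{q}(u_1)}_Z\lesssim\norm{u_1}_{C^m}\norm{u_\mathrm{q}(u_1)}_X\lesssim\varepsilon^{1-\theta}\Lambda\cdot\norm{u_1}_{L^2}^{1+\theta}\norm{u_1}_{\dot{H}_{\omega_0}^1}^{1-\theta},
\]
using the $X\cdot C^m\hookrightarrow Z$ embedding of Lemma~\ref{lemma:embeddings}, Lemma~\ref{lemma:interpolation}, Remark~\ref{rem:q estimate}, and the constraint $\norm{u_1}_{\dot{H}_{\omega_0}^1}\le\varepsilon\Lambda$. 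The term $\norm{u_\mathrm{q}(u_1)^2}_Z$ is handled by squaring Remark~\ref{rem:q estimate} and factoring one copy of $\norm{u_1}_{L^2}^{1+\theta}\norm{u_1}_{\dot{H}_{\omega_0}^1}^{1-\theta}\le\varepsilon^{1-\theta}\Lambda^2$ out of the result, while the $\varepsilon^2u_\mathrm{q}$ contribution carries an extra factor $\varepsilon^{1+\theta}$. Choosing $C_0$ large in $\Lambda$ and $\varepsilon$ sufficiently small then absorbs all constants into $\tfrac12 r(u_1)$.

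The contraction estimate $\norm{\mathrm{d}_2G[u_1,u_\mathrm{c}]}_{X_2\to X_2}\le\tfrac13$ is obtained by the same procedure applied to
\[
\mathrm{d}_2G[u_1,u_\mathrm{c}](v_\mathrm{c})=-n(\Diff)^{-1}(1-\chi(\Diff))\bigl[2u_1v_\mathrm{c}+2(u_\mathrm{q}+u_\mathrm{c})v_\mathrm{c}+\varepsilon^2 v_\mathrm{c}\bigr];
\]
for $u_\mathrm{c}\in\overline{B}_{r(u_1)}(0)$ one has $\norm{u_\mathrm{c}}_X\lesssim\varepsilon^{2(1-\theta)}\Lambda^2$, so each of the three contributions is bounded by $\varepsilon^{1-\theta}$ (times harmless constants) multiplied by $\norm{v_\mathrm{c}}_X$, which forces the operator norm below $\tfrac13$ for $\varepsilon$ small.

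For the derivative bounds I would compute $\mathrm{d}_1G$, $\mathrm{d}_1^2G$, $\mathrm{d}_1\mathrm{d}_2G$ and $\mathrm{d}_2^2G$ explicitly, using that $\mathrm{d}u_\mathrm{q}[u_1](v_1)=-2n(\Diff)^{-1}(1-\chi(\Diff))(u_1v_1)$ and $\mathrm{d}^2u_\mathrm{q}[u_1](v_1,w_1)=-2n(\Diff)^{-1}(1-\chi(\Diff))(v_1w_1)$, and substitute them into the second set of estimates of Lemma~\ref{lemma:fixedpoint} (with $u_1^1=u_1^2=u_1$). The resulting bilinear and trilinear expressions are of the same form already handled, so the identical triad of tools — Lemma~\ref{lemma:n-isomorphism}, the multiplicative estimates of Lemma~\ref{lemma:embeddings}/Remark~\ref{rem:XZ algebra}, and Lemma~\ref{lemma:interpolation} — delivers the advertised bounds. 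The main obstacle is purely organisational: in each term one must pair the interpolation factor $\norm{u_1}_{C^m}\lesssim\norm{u_1}_{L^2}^{\theta}\norm{u_1}_{\dot{H}_{\omega_0}^1}^{1-\theta}$ with precisely the right remaining factors so that the exponents combine to exactly $\varepsilon^{1-\theta}\norm{u_1}_{L^2}^{1+\theta}\norm{u_1}_{\dot{H}_{\omega_0}^1}^{1-\theta}$, with all powers of the fixed parameter $\Lambda$ absorbed into the implicit constants.
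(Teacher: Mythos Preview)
Your proposal is correct and follows essentially the same approach as the paper: apply Lemma~\ref{lemma:fixedpoint} with $W_1=X_1$, $W_2=X_2$, $F=G$ and the radius $r(u_1)$ a multiple of $\varepsilon^{1-\theta}\norm{u_1}_{L^2}^{1+\theta}\norm{u_1}_{\dot{H}_{\omega_0}^1}^{1-\theta}$, using Lemma~\ref{lemma:n-isomorphism} to pass from $X$- to $Z$-norms and then the embedding/interpolation estimates (Lemmata~\ref{lemma:embeddings}, \ref{lemma:interpolation}, Remarks~\ref{rem:XZ algebra}, \ref{rem:q estimate}) to bound each term. The paper's proof differs only in presentation, writing out the explicit formulae for all partial derivatives of $G$ at the outset before estimating them termwise.
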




\begin{proof}
First note that
\begin{align*}
\mathrm{d}_1G[u_1,u_\mathrm{c}](u_1)&=-\frac{1-\chi(\Diff)}{n(\Diff)}
\big[2(u_\mathrm{q}(u_1)+u_\mathrm{c})u_1 \notag\\
& \hspace{1.1in}\mbox{}-2(2u_1+2u_\mathrm{q}(u_1)+2u_\mathrm{c}+\varepsilon^2)u_\mathrm{q}(u_1)\big],\notag\\
\mathrm{d}_2 G[u_1,u_\mathrm{c}](u_\mathrm{c}^1)&= -\frac{1-\chi(\Diff)}{n(\Diff)} \big[(2 u_1 + 2 u_\mathrm{q}(u_1) + 2 u_\mathrm{c} +\varepsilon^2) u_\mathrm{c}^1\big],
\end{align*}
and
\begin{align*}
\mathrm{d}_1^2 G[u_1,u_\mathrm{c}](u_1,u_1)&= -2\frac{1-\chi(\Diff)}{n(\Diff)} \big[(\varepsilon^2 + 6 u_1 +6u_\mathrm{q}(u_1)+ 2u_\mathrm{c})  u_\mathrm{q}(u_1) \big],\\
\mathrm{d}_1\mathrm{d}_2G[u_1,u_\mathrm{c}](u_1,u_\mathrm{c}^1)&=
-2\frac{1-\chi(\Diff)}{n(\Diff)} [(u_1+2 u_\mathrm{q}(u_1))u_\mathrm{c}^1],\\
\mathrm{d}_2^2G[u_1,u_\mathrm{c}](u_\mathrm{c}^1,u_\mathrm{c}^2)&=-2\frac{1-\chi(\Diff)}{n(\Diff)}[u_\mathrm{c}^1 u_\mathrm{c}^2],
\end{align*}
since $\mathrm{d}u_\mathrm{q}[u_1](u_1)=2u_\mathrm{q}(u_1)$, where $u_\mathrm{c}^1$, $u_\mathrm{c}^2 \in X_2$ are free directions.

Using Lemmata~\ref{lemma:embeddings}--\ref{lemma:interpolation} and Remarks \ref{rem:norms the same}
and \ref{rem:q estimate}, we find that
\begin{align*}
\norm{G(u_1,0)}_X &
\lesssim \norm{u_1 u_\mathrm{q}(u_1)}_Z+\norm{u_\mathrm{q}(u_1)^2}_Z+\varepsilon^2\norm{u_\mathrm{q}(u_1)}_Z \\
&\lesssim \norm{u_1}_{C^m} \norm{u_\mathrm{q}(u_1)}_X + \norm{u_\mathrm{q}(u_1)}_X^2 +\varepsilon^2 \norm{u_\mathrm{q}(u_1)}_X \\
&\lesssim \norm{u_1}_{C^m} \norm{u_1}_{L^2} (\norm{u_1}_{C^m} +\norm{u_1}_{C^m} \norm{u_1}_{L^2} + \varepsilon^2) \\
&\lesssim \norm{u_1}_{L^2}^{1+\theta} \norm{u_1}_{\dot{H}_{\omega_0}^1}^{1-\theta} \left(\norm{u_1}_{L^2}^\theta \norm{u_1}_{\dot{H}_{\omega_0}^1}^{1-\theta}+\varepsilon^2 \right),
\end{align*}
where $m$ is a fixed integer with $m>s-\frac{1}{2}$. It follows that
\begin{equation}\label{eq:G(u_1,0)}
\norm{G(u_1,0)}_X \lesssim  \varepsilon^{1-\theta} \norm{u_1}_{L^2}^{1+\theta} \norm{u_1}_{\dot{H}_{\omega_0}^1}^{1-\theta}
\end{equation}
for $u_1 \in U_1$.
Similarly, 
\begin{align}
\norm{\mathrm{d}_2 G[u_1,u_\mathrm{c}](u_\mathrm{c}^1)}_X & \lesssim (\norm{u_1}_{C^m} +\norm{u_1}_{C^m} \norm{u_1}_{L^2} + \norm{u_\mathrm{c}}_X +\varepsilon^2)\norm{u_\mathrm{c}^1}_X \nonumber \\
&\lesssim (\norm{u_1}_{L^2}^\theta \norm{u_1}_{\dot{H}_{\omega_0}^1}^{1-\theta} +\norm{u_\mathrm{c}}_X)\norm{u_\mathrm{c}^1}_X \nonumber\\
&\lesssim (\varepsilon^{1-\theta} +\norm{u_\mathrm{c}}_X)\norm{u_\mathrm{c}^1}_X \label{eq:d2Gnorm}
\end{align}
for $u_1 \in U_1$. Let $r(u_1)$ be a sufficiently large multiple of the right-hand side of \eqref{eq:G(u_1,0)}, and consider $u_\mathrm{c} \in \overline{B}_{r(u_1)}(0) \subset X_2$ in \eqref{eq:d2Gnorm}. Lemma~\ref{lemma:fixedpoint} guarantees a unique fixed point $u_\mathrm{c}(u_1) \in \overline{B}_{r(u_1)}(0)$ of \eqref{eq:split G} for sufficiently small values of $\varepsilon$.

Proceeding in the same manner, and estimating $\norm{u_\mathrm{c}(u_1)}_X \leq r(u_1)$, we find that
\begin{align*}
\norm{\mathrm{d}_1G[u_1,u_\mathrm{c}(u_1)](u_1)}_{X}&\lesssim \norm{u_1 u_\mathrm{q}(u_1)}_Z +\norm {u_1 u_\mathrm{c}(u_1)}_{Z}+ \norm{u_\mathrm{q}(u_1)u_\mathrm{c}(u_1)}_Z\\
&\qquad\mbox{}+ \norm{u_\mathrm{q}(u_1)^2}_Z+\varepsilon^2 \norm{u_\mathrm{q}(u_1)}_Z\\
&\lesssim  \varepsilon^{1-\theta} \norm{u_1}_{L^2}^{1+ \theta} \norm{u_1}_{\dot{H}_{\omega_0}^1}^{1-\theta},
\end{align*}
and
\begin{align*}
\norm{\mathrm{d}_1^2 G[u_1,u_\mathrm{c}(u_1)](u_1,u_1)}_X &\lesssim \norm{u_1 u_\mathrm{q}(u_1)}_Z + \norm{u_\mathrm{q}(u_1)u_\mathrm{c}(u_1)}_Z\\
&\qquad\mbox{}+ \norm{u_\mathrm{q}(u_1)^2}_Z+\varepsilon^2 \norm{u_\mathrm{q}(u_1)}_Z\\
&\lesssim  \varepsilon^{1-\theta} \norm{u_1}_{L^2}^{1+\theta} \norm{u_1}_{\dot{H}_{\omega_0}^1}^{1-\theta}, \\
\norm{\mathrm{d}_1\mathrm{d}_2G[u_1,u_\mathrm{c}(u_1)](u_1,u_\mathrm{c}^1)}_X&\lesssim \norm{u_1 u_\mathrm{c}^1}_Z + \norm{u_\mathrm{q}(u_1) u_\mathrm{c}^1}_Z\\
&\lesssim \norm{u_\mathrm{c}^1}_X, \\
\norm{\mathrm{d}_2^2G[u_1,u_\mathrm{c}(u_1)](u_\mathrm{c}^1,u_\mathrm{c}^2)}_X&\lesssim \norm{u_\mathrm{c}^1}_X \norm{u_\mathrm{c}^2}_X, 
\end{align*}
from which the remaining estimates for $u_\mathrm{c}(u_1)$ follow by Lemma \ref{lemma:fixedpoint}.
\end{proof}

Substituting $u_2=u_\mathrm{q}(u_1)+u_\mathrm{c}(u_1)$ into equation \eqref{eq:split 1}, we obtain the
reduced equation
$$\varepsilon^2u_1+n(\Diff)u_1+\chi(\Diff)(u_1+u_\mathrm{q}(u_1)+u_\mathrm{c}(u_1))^2=0$$
for $u_1$, which is the Euler-Lagrange equation for the reduced functional
$\widetilde{\mathcal{I}}_\varepsilon\colon U_1 \mapsto \mathbb{R}$ defined by
\begin{align*}
\widetilde{\mathcal{I}}_\varepsilon(u_1)&:=\mathcal{I}_\varepsilon(u_1+u_\mathrm{q}(u_1)+u_\mathrm{c}(u_1))\\
&= \frac{1}{2} \int_{\mathbb{R}^2} \big( \sqrt{n(\Diff)} (u_1+u_\mathrm{q}(u_1)+u_\mathrm{c}(u_1)) \big)^2 \, \mathrm{d}x\, \mathrm{d}y\\ 
&\qquad\qquad\mbox{}+ \frac{c_0 \varepsilon^2}{2}\int_{\mathbb{R}^2}(u_1+u_\mathrm{q}(u_1)+u_\mathrm{c}(u_1))^2\, \mathrm{d}x\, \mathrm{d}y \\
&\qquad\qquad\mbox{}+\frac{1}{3}\int_{\mathbb{R}^2}(u_1+u_\mathrm{q}(u_1)+u_\mathrm{c}(u_1))^3\, \mathrm{d}x\, \mathrm{d}y,
\end{align*}
where the symmetrically weighted inner product $\langle \sqrt{n(\Diff)} u,\sqrt{n(\Diff}v\rangle$
is well defined for \(u, v \in X\) since \(n(k)\) is real and non-negative. In Lemma \ref{lemm:itilde_decomp} below
we identify the leading-order terms in $\widetilde{\mathcal{I}}_\varepsilon$; in its proof we use the following
technical result, which shows that higher-order nonlinear terms in our functional are small. Note that \(\norm{u_1}_{\dot{H}_{\omega_0}^1} \lesssim \varepsilon\), and \(\theta \in (0,1)\) can be taken arbitrarily small, so that the
right-hand side of the estimate is essentially dominated by \(\varepsilon^{r^* + s + 2t}\) with \(r^* \geq r-2\).

\begin{proposition}\label{prop:general estimate}
Let \(r, s,t \geq 0\) be integers with \(r + s + t \geq 2\), and let \(r^* = r - \max\{2 - (s + t), 0\}\). One has the estimate
\[
\left| \int u_1^r u_\mathrm{q}(u_1)^s u_\mathrm{c}(u_1)^t \dx \dy \right| \lesssim \varepsilon^{t(1-\theta)} \norm{u_1}_{\dot{H}_{\omega_0}^1}^{(r^* + s+t)(1-\theta)}  \norm{u_1}_{L^2}^{(r - r^* + s+t) + (r^* + s + t)\theta}
\]
and this result remains true when all or some of the \(u_\mathrm{q}(u_1)\) and \(u_\mathrm{c}(u_1)\) are replaced by
their derivatives \(\diff u_\mathrm{q}[u_1](u_1)\) or \(\diff^2 u_\mathrm{q}[u_1](u_1,u_1)\) and \(\diff u_\mathrm{c}[u_1](u_1)\) or \(\diff^2 u_\mathrm{c}[u_1](u_1,u_1)\), respectively.
\end{proposition}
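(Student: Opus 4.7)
The plan is to bound the integral by H\"older's inequality, placing exactly two factors in $L^2(\mathbb{R}^2)$ and the remaining factors in $L^\infty(\mathbb{R}^2)$, then applying Lemma~\ref{lemma:interpolation} to the $L^\infty$-norms of $u_1$ and the estimates of Remark~\ref{rem:q estimate} and Lemma~\ref{lemma:reduction} (together with the embeddings $X \hookrightarrow L^\infty(\mathbb{R}^2) \cap L^2(\mathbb{R}^2)$ from Lemma~\ref{lemma:embeddings}) to convert the $L^2$- and $L^\infty$-norms of $u_\mathrm{q}(u_1)$ and $u_\mathrm{c}(u_1)$ into mixed expressions involving $\norm{u_1}_{L^2}$, $\norm{u_1}_{\dot H^1_{\omega_0}}$ and $\varepsilon$.

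I would split into three cases according to the value of $s+t$, since the definition of $r^*$ depends on it. If $s+t \geq 2$, so that $r^* = r$, I place all $r$ copies of $u_1$ in $L^\infty$ together with all but two of the higher-order factors; the remaining two higher-order factors go into $L^2$. If $s+t = 1$, so $r \geq 1$ and $r^* = r-1$, I place $r-1$ copies of $u_1$ in $L^\infty$ and pair the remaining $u_1$ with the single $u_\mathrm{q}(u_1)$ or $u_\mathrm{c}(u_1)$ in $L^2$. If $s+t = 0$, so $r \geq 2$ and $r^* = r-2$, I use the elementary bound $\int u_1^r\,\dx\dy \leq \norm{u_1}_\infty^{r-2}\norm{u_1}_{L^2}^2$. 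In every case, each $\norm{u_1}_\infty$ contributes a factor $\norm{u_1}_{L^2}^\theta \norm{u_1}_{\dot H^1_{\omega_0}}^{1-\theta}$ via Lemma~\ref{lemma:interpolation}; each $\norm{u_\mathrm{q}(u_1)}_X$ contributes $\norm{u_1}_{L^2}^{1+\theta} \norm{u_1}_{\dot H^1_{\omega_0}}^{1-\theta}$; and each $\norm{u_\mathrm{c}(u_1)}_X$ contributes an additional $\varepsilon^{1-\theta}$. Multiplying out and using the identity $r - r^* + s + t = \max\{s+t,2\}$ produces the stated exponents $\varepsilon^{t(1-\theta)}\, \norm{u_1}_{\dot H^1_{\omega_0}}^{(r^*+s+t)(1-\theta)}\, \norm{u_1}_{L^2}^{(r-r^*+s+t) + (r^*+s+t)\theta}$.

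The extension to derivatives $\diff u_\mathrm{q}[u_1](u_1)$, $\diff^2 u_\mathrm{q}[u_1](u_1,u_1)$, $\diff u_\mathrm{c}[u_1](u_1)$, $\diff^2 u_\mathrm{c}[u_1](u_1,u_1)$ is immediate because these quantities satisfy the same $X$-norm estimates as $u_\mathrm{q}(u_1)$ and $u_\mathrm{c}(u_1)$ respectively, per Remark~\ref{rem:q estimate} and Lemma~\ref{lemma:reduction}. The main obstacle is not conceptual but organisational: one must ensure, via the case split on $s+t$, that H\"older's inequality always leaves \emph{exactly} two factors in $L^2$, since we have no $L^1$-bound on $u_1$ available; the slightly awkward formula for $r^*$ is precisely the one that makes this bookkeeping work out cleanly.
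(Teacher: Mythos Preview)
Your proposal is correct and follows essentially the same approach as the paper: extract $r^*$ copies of $u_1$ in sup norm, place the remaining factors in an $L^2$-pairing via Cauchy--Schwarz/H\"older, and then invoke Lemma~\ref{lemma:interpolation}, Remark~\ref{rem:q estimate} and Lemma~\ref{lemma:reduction}. Your explicit three-case split on $s+t$ is in fact cleaner than the paper's compressed presentation (which writes the single split $|u_1^{r-r^*}u_\mathrm{q}^s|_{L^2}\,|u_\mathrm{c}^t|_{L^2}$ and tacitly rearranges it in the degenerate cases $t=0$ or $r-r^*+s=0$), but the underlying argument is identical.
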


\begin{proof}
Estimating the integral using the Cauchy-Schwarz inequality and Remark \ref{rem:XZ algebra}, one finds that
\begin{align*}
\left| \int u_1^r u_\mathrm{q}(u_1)^s u_\mathrm{c}(u_1)^t \dx \dy \right| & \lesssim |u_1|_{C^m}^{r^*} | u_1^{r-r^*} u_\mathrm{q}(u_1)^s |_{L^2}  |u_\mathrm{c}(u_1)^t|_{L^2}\\ 
&\lesssim |u_1|_{C^m}^{r^*} | u_1^{r-r^*} u_\mathrm{q}(u_1)^s |_{Z}  |u_\mathrm{c}(u_1)^t|_{X}\\
&\lesssim |u_1|_{C^m}^{r^*} | u_1 |_X^{r-r^*} |u_\mathrm{q}(u_1)|_{X}^s  |u_\mathrm{c}(u_1)|_{X}^t\\
&\lesssim  \varepsilon^{t(1-\theta)} \norm{u_1}_{\dot{H}_{\omega_0}^1}^{(r^* + s+t)(1-\theta)}  \norm{u_1}_{L^2}^{(r - r^* + s+t) + (r^* + s + t)\theta},
\end{align*}
where the last line follows by Remark \ref{rem:q estimate} and Lemmata \ref{lemma:interpolation} and \ref{lemma:reduction}.
The estimate evidently remains true if some or all of the $u_\mathrm{q}(u_1)$ and $u_\mathrm{c}(u_1)$ are replaced by one of their derivatives.
\end{proof}

\begin{lemma}\label{lemm:itilde_decomp}
The reduced functional $\widetilde{\mathcal{I}}_\varepsilon\colon U_1 \mapsto \mathbb{R}$ satisfies
\begin{equation*}
\widetilde{\mathcal{I}}_\varepsilon(u_1) = \frac{1}{2} |\sqrt{n(\Diff)} u_1|^2_{L^2} - \frac{1}{2} |\sqrt{n(\Diff)}\, u_\mathrm{q}(u_1)|^2_{L^2} +\frac{1}{2}c_0 \varepsilon^2 |u_1|_{L^2}^2  + \mathcal{R}_\varepsilon(u_1),
\end{equation*}
where
\begin{align*}
&\abs{\mathcal{R}_\varepsilon(u_1)} + \abs{\diff \mathcal{R}_\varepsilon[u_1](u_1)} + \abs{\diff^2 \mathcal{R}_\varepsilon[u_1](u_1,u_1)}\\
&\qquad \lesssim \big( \varepsilon^{2(1-\theta)} |u_1|_{\dot H_{\omega_0}^1}^{2(1-\theta)} + |u_1|_{\dot H_{\omega_0}^1}^{3(1-\theta)}\big) |u_1|^{2}_{L^2}.
\end{align*}
\end{lemma}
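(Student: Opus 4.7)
The plan is to expand $\widetilde{\mathcal{I}}_\varepsilon(u_1) = \mathcal{I}_\varepsilon(u_1 + u_\mathrm{q}(u_1) + u_\mathrm{c}(u_1))$ directly in monomials of $u_1$, $u_\mathrm{q}$, $u_\mathrm{c}$, and to exploit three algebraic facts. First, since $u_1^2 \in X_2$ (because $3\delta < \omega_0$), the defining relation $u_\mathrm{q} = -n(\Diff)^{-1} u_1^2$ can be rearranged as $n(\Diff) u_\mathrm{q} = -u_1^2$. Second, the Fourier supports of $X_1$ and $X_2$ are disjoint and $n(\Diff)$ preserves $X_2$, so every inner product $\langle u_1, f\rangle_{L^2}$ with $f \in X_2$ vanishes, in particular $\langle u_1, u_j\rangle_{L^2} = \langle u_1, n(\Diff) u_j\rangle_{L^2} = 0$ for $j \in \{\mathrm{q}, \mathrm{c}\}$. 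Third, $\int u_1^3\, \dx\, \dy = 0$ by a convolution argument: $\widehat{u_1^3} = \hat u_1 \ast \hat u_1 \ast \hat u_1$ is supported in $B + B + B$, which stays at distance at least $\omega_0 - 3\delta > 0$ from the origin.

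Expanding the three parts of $\mathcal{I}_\varepsilon$, the quadratic $n(\Diff)$-form contributes
\[
\tfrac{1}{2}|\sqrt{n(\Diff)}\, u_1|^2_{L^2} + \tfrac{1}{2}|\sqrt{n(\Diff)}\, u_\mathrm{q}|^2_{L^2} + \langle u_\mathrm{q}, n(\Diff) u_\mathrm{c}\rangle + \tfrac{1}{2}|\sqrt{n(\Diff)}\, u_\mathrm{c}|^2_{L^2},
\]
the $\varepsilon^2$-form contributes $\tfrac{1}{2} c_0 \varepsilon^2 |u_1|^2_{L^2}$ plus pure $u_\mathrm{q}, u_\mathrm{c}$ quadratic terms, and the cubic form yields $\int u_1^2 u_\mathrm{q} + \int u_1^2 u_\mathrm{c}$ plus higher monomials. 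Two cancellations then drive the identification of the leading order. The first is $\int u_1^2 u_\mathrm{q} = \int (-n(\Diff) u_\mathrm{q}) u_\mathrm{q} = -|\sqrt{n(\Diff)}\, u_\mathrm{q}|^2_{L^2}$, which combines with the $+\tfrac{1}{2}|\sqrt{n(\Diff)}\, u_\mathrm{q}|^2_{L^2}$ above to produce the stated $-\tfrac{1}{2}|\sqrt{n(\Diff)}\, u_\mathrm{q}|^2_{L^2}$. The second is $\langle u_\mathrm{q}, n(\Diff) u_\mathrm{c}\rangle = \langle n(\Diff) u_\mathrm{q}, u_\mathrm{c}\rangle = -\int u_1^2 u_\mathrm{c}$ by self-adjointness of $n(\Diff)$, so these two terms cancel against each other.

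The remainder $\mathcal{R}_\varepsilon(u_1)$ thus consists of $\tfrac{1}{2}|\sqrt{n(\Diff)}\, u_\mathrm{c}|^2_{L^2}$, the $\varepsilon^2$-terms involving $u_\mathrm{q}, u_\mathrm{c}$, and the higher cubic terms $\int u_1 u_\mathrm{q}^2$, $2\int u_1 u_\mathrm{q} u_\mathrm{c}$, $\int u_1 u_\mathrm{c}^2$, $\tfrac{1}{3}\int u_\mathrm{q}^3$, $\int u_\mathrm{q}^2 u_\mathrm{c}$, $\int u_\mathrm{q} u_\mathrm{c}^2$, $\tfrac{1}{3}\int u_\mathrm{c}^3$. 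Each cubic term falls in the scope of Proposition~\ref{prop:general estimate}: for instance $\int u_1 u_\mathrm{q}^2$ corresponds to $(r,s,t) = (1,2,0)$ with $r^* = 1$, yielding a bound by $|u_1|^{3(1-\theta)}_{\dot{H}_{\omega_0}^1} |u_1|^{2+3\theta}_{L^2}$, where the excess $|u_1|_{L^2}^{3\theta}$ is absorbed using $|u_1|_{L^2} \leq \Lambda$. Each factor of $u_\mathrm{c}$ contributes an extra $\varepsilon^{1-\theta}$, which together with $|u_1|_{\dot{H}_{\omega_0}^1} \leq \varepsilon \Lambda$ produces the $\varepsilon^{2(1-\theta)} |u_1|^{2(1-\theta)}_{\dot{H}_{\omega_0}^1} |u_1|^2_{L^2}$ part of the target. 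The pure quadratic term $\tfrac{1}{2}|\sqrt{n(\Diff)}\, u_\mathrm{c}|^2_{L^2}$ is controlled by $|u_\mathrm{c}|_X^2$ through Lemma~\ref{lemma:n-isomorphism}, giving the correct size directly, and the $\varepsilon^2$-terms in $u_\mathrm{q}, u_\mathrm{c}$ are handled by the same strategy with $\varepsilon^2 \leq \varepsilon^{2(1-\theta)}$.

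For the derivative bounds, differentiating a monomial $\int u_1^r u_\mathrm{q}^s u_\mathrm{c}^t$ in the direction $u_1$ produces a sum of integrals of the same type, with some factors of $u_\mathrm{q}$ or $u_\mathrm{c}$ replaced by $\diff u_\mathrm{q}[u_1](u_1) = 2 u_\mathrm{q}(u_1)$ or $\diff u_\mathrm{c}[u_1](u_1)$, and analogously for the second derivative. Since Proposition~\ref{prop:general estimate} explicitly accommodates such replacements, the bounds for $\diff \mathcal{R}_\varepsilon[u_1](u_1)$ and $\diff^2 \mathcal{R}_\varepsilon[u_1](u_1,u_1)$ follow from the same case analysis. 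The main obstacle is therefore bookkeeping: correctly cataloguing every monomial produced by the expansion, verifying the two cancellations, and selecting the right $(r,s,t)$ for each surviving term before applying the proposition.
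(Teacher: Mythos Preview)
Your proposal is correct and follows essentially the same route as the paper. The paper packages the expansion more compactly by writing $u_2=u_\mathrm{q}+u_\mathrm{c}$ and recording the remainder as $\tfrac{1}{2}|\sqrt{n(\Diff)}\,u_\mathrm{c}|_{L^2}^2+\langle u_1,u_2^2\rangle+\tfrac{1}{3}\langle u_2,u_2^2\rangle+\tfrac{1}{2}c_0\varepsilon^2|u_2|_{L^2}^2$, but your fully expanded monomial list is the same object, the two cancellations you identify are exactly those used (implicitly) in the paper, and both proofs finish by invoking Proposition~\ref{prop:general estimate} term by term.
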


\begin{proof}
Using the fact that $u_1$ is orthogonal to $u_\mathrm{q}(u_1)$, $u_\mathrm{c}(u_1)$ and $u_1^2$ in $L^2(\R^2)$
and the relationship $u_1^2=-n(\Diff)u_\mathrm{q}(u_1)$, we find that
\begin{equation*}
\widetilde{\mathcal{I}}_\varepsilon(u_1)= \frac{1}{2} | \sqrt{n(\Diff)} u_1|_{L^2}^2 - \frac{1}{2} |\sqrt{n(\Diff)} u_\mathrm{q}(u_1) |_{L^2}^2  +\frac{1}{2}c_0\varepsilon^2 |u_1|_{L^2}^2 +\mathcal{R}_\varepsilon(u_1),
\end{equation*}
where
\begin{align*}
\mathcal{R}_{\varepsilon}(u_1)&= \textstyle \frac{1}{2} | \sqrt{n(\Diff)}\, u_\mathrm{c}(u_1)|_{L^2}^2 + \langle u_1, u_2(u_1)^2\rangle  \\
& \qquad\qquad\mbox{}+\tfrac{1}{3}\langle u_2(u_1),u_2(u_1)^2\rangle +\tfrac{1}{2}c_0\varepsilon^2\norm{ u_2(u_1)}_{L^2}^2
\end{align*}
and $u_2(u_1)=u_\mathrm{q}(u_1)+u_\mathrm{c}(u_1)$. Note that
\begin{align*}
| \sqrt{n(\Diff)}\, u_\mathrm{c}|_{L^2}^2 &= \langle n(\Diff)\, u_\mathrm{c},  u_\mathrm{c} \rangle \\
& \leq | n(\Diff)\, u_\mathrm{c} |_Z | u_\mathrm{c} |_X\\ 
&\lesssim | u_\mathrm{c} |_X^2 \\
& \lesssim \varepsilon^{2(1-\theta)} |u_1|_{\dot H^1_\omega}^{2(1-\theta)} |u_1|_{L^2}^{2(1+\theta)},
\end{align*}
and the derivatives of this term clearly satisfy the same estimates.
The remaining terms in $\mathcal{R}_\varepsilon(u_1)$
may all be estimated using Proposition~\ref{prop:general estimate}; we find that
\begin{align*}
\diff^k \langle u_1, u_2^2\rangle [u_1](u_1^{(k)}) &\lesssim \norm{u_1}_{\dot{H}_{\omega_0}^1}^{3(1-\theta)}  \norm{u_1}_{L^2}^{2 + 3\theta}, \\
\diff^k \langle u_2,u_2^2\rangle[u_1](u_1^{(k)}) &\lesssim  \norm{u_1}_{\dot{H}_{\omega_0}^1}^{3(1-\theta)}  \norm{u_1}_{L^2}^{3(1+\theta)},\\
\diff^k  \langle u_2,u_2 \rangle [u_1] (u_1^{(k)}) &\lesssim  \norm{u_1}_{\dot{H}_{\omega_0}^1}^{2(1-\theta)}  \norm{u_1}_{L^2}^{2(1+\theta)}, \qquad k = 0,1,2.\qedhere
\end{align*}
\end{proof}

We proceed to reduce the functional further by writing
\begin{equation*}
u_1= u_1^+ + u_1^-, 
\end{equation*}
where  \(u_\pm = \chi_\pm(\mathrm{D})u_1\) and $\chi_\pm$ are the characteristic functions of the discs $B_\pm$ (note that $ u_1^-=\overline{u_1^+}$ since $u_1$ is real).
More specifically, we now simplify the quadratic terms in the functional
\(\tilde {\mathcal I}_\varepsilon\) and approximate the term \(|\sqrt{n(\Diff)}\, u_\mathrm{q}(u_1)|^2_{L^2}\) by
two leading-order parts.

\begin{lemma}\label{lemma:tildefunctional}
For all $u_1\in U_1$, one has that
\begin{align*}
\widetilde{\mathcal{J}}_\varepsilon(u_1^+) &:= \widetilde{\mathcal{I}}_\varepsilon(u_1^+ + u_1^-) \nonumber\\
&= \big|\sqrt{n(\Diff)} u_1^+\big|^2_{L^2} + c_0 \varepsilon^2 \big|u_1^+\big|_{L^2}^2 - \frac{1}{n(2\omega_0,0)} \big| {u_1^+}^2 \big|_{L^2}^2\nonumber\\
&\qquad \mbox{}-2 \bigg| \bigg[\left(1+\frac{2 \Diff_2^2}{\Diff_1^2}\right)^\frac{1}{2}-c_0\bigg]^{-\frac{1}{2}} {u_1^+}^2 \bigg|_{L^2}^2 + \widetilde{\mathcal{R}}_\varepsilon(u_1^+),
\end{align*}
where
\begin{align*}
&\big| \mathcal{\widetilde R}_\varepsilon(u_1^+) \big| + \big|\diff \mathcal{\widetilde R}_\varepsilon[u_1^+](u_1^+) \big| + \big| \diff^2 \mathcal{R}_\varepsilon[u_1^+](u_1^+,u_1^+) \big|\\
&\qquad  \lesssim \big( \varepsilon^{2(1-\theta)} |u_1^+|_{\dot H_{\omega_0}^1}^{2(1-\theta)} + |u_1^+|_{\dot H_{\omega_0}^1}^{3(1-\theta)}\big) |u_1^+|^{2}_{L^2}.
\end{align*}
\end{lemma}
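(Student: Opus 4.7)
The plan is to substitute $u_1 = u_1^+ + u_1^-$, with $u_1^- = \overline{u_1^+}$, into the expansion
\[
\widetilde{\mathcal{I}}_\varepsilon(u_1) = \tfrac{1}{2}\abs{\sqrt{n(\Diff)}\, u_1}_{L^2}^2 - \tfrac{1}{2}\abs{\sqrt{n(\Diff)}\, u_\mathrm{q}(u_1)}_{L^2}^2 + \tfrac{1}{2}c_0 \varepsilon^2 \abs{u_1}_{L^2}^2 + \mathcal{R}_\varepsilon(u_1)
\]
from Lemma~\ref{lemm:itilde_decomp}. Since $\hat u_1^{\pm}$ are supported in the disjoint discs $B_\pm = B_\delta(\pm\omega_0,0)$, Plancherel immediately gives $\abs{u_1}_{L^2}^2 = 2\abs{u_1^+}_{L^2}^2$ and $\abs{\sqrt{n(\Diff)}\, u_1}_{L^2}^2 = 2\abs{\sqrt{n(\Diff)}\, u_1^+}_{L^2}^2$, accounting for the first two quadratic terms of the claimed decomposition.

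The heart of the argument concerns the quartic term $-\tfrac{1}{2}\abs{\sqrt{n(\Diff)}\, u_\mathrm{q}(u_1)}_{L^2}^2$. Since $u_\mathrm{q}(u_1) = -n(\Diff)^{-1} u_1^2$, this equals $-\tfrac{1}{2}\int n(k)^{-1}\abs{\widehat{u_1^2}(k)}^2\dk$. I would expand $u_1^2 = (u_1^+)^2 + 2u_1^+ u_1^- + (u_1^-)^2$; the Fourier supports of the three summands lie respectively in $B_{2\delta}(\pm 2\omega_0,0)$ and $B_{2\delta}(0,0)$ and are pairwise disjoint because $\delta < \omega_0/3$. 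Combined with the reality identity $\widehat{(u_1^-)^2}(k) = \overline{\widehat{(u_1^+)^2}(-k)}$ and the evenness of $n$, Plancherel over these disjoint regions produces
\[
-\tfrac{1}{2}\abs{\sqrt{n(\Diff)}\, u_\mathrm{q}(u_1)}_{L^2}^2 = -\int n(k)^{-1}\abs{\widehat{(u_1^+)^2}(k)}^2\dk - 2\int n(k)^{-1}\abs{\widehat{u_1^+ u_1^-}(k)}^2\dk.
\]
I would then replace the multiplier $n(k)^{-1}$ by its natural local value on each support: by the constant $n(2\omega_0,0)^{-1}$ on $\supp\widehat{(u_1^+)^2} \subset B_{2\delta}(2\omega_0,0)$, and by $\bigl[(1+2k_2^2/k_1^2)^{1/2}-c_0\bigr]^{-1}$ on $\supp\widehat{u_1^+ u_1^-} \subset B_{2\delta}(0,0)$, exploiting the factorisation $n(k) = c(\abs{k})(1+2k_2^2/k_1^2)^{1/2} - c_0$ together with $c(\abs{k}) - 1 = \bigO(\abs{k}^2)$ near the origin. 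These substitutions yield precisely the two DS-type quartic terms in the statement, while the substitution errors together with the inherited $\mathcal{R}_\varepsilon(u_1)$ form the new remainder $\widetilde{\mathcal{R}}_\varepsilon(u_1^+)$.

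For the error bounds I would pass to the frequency-shifted variable $v = \ee^{-\I\omega_0 x}u_1^+$, for which $\abs{v}_{L^2} = \abs{u_1^+}_{L^2}$ and $\abs{\nabla v}_{L^2} = \abs{u_1^+}_{\dot{H}^1_{\omega_0}}$ plays the role of the small quantity. Since $(u_1^+)^2 = \ee^{2\I\omega_0 x}v^2$ and $u_1^+ u_1^- = \abs{v}^2$, the Taylor-remainder weights $\abs{k-(2\omega_0,0)}$ and $\abs{k}^2$ in the two error integrals translate through Plancherel into $\abs{\nabla(v^2)}_{L^2}^2$- and $\abs{\nabla\abs{v}^2}_{L^2}^2$-type quantities. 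By the product rule each of these is controlled by $\abs{u_1^+}_{L^\infty}^2\abs{u_1^+}_{\dot{H}^1_{\omega_0}}^2$, and $\abs{u_1^+}_{L^\infty}$ is in turn interpolated via Lemma~\ref{lemma:interpolation}. The estimates on $\diff\widetilde{\mathcal{R}}_\varepsilon$ and $\diff^2\widetilde{\mathcal{R}}_\varepsilon$ follow from the same template applied to the Gateaux derivatives, using the identity $\diff u_\mathrm{q}[u_1](u_1) = 2u_\mathrm{q}(u_1)$ and the derivative bounds for $u_\mathrm{c}$ from Lemma~\ref{lemma:reduction}. I expect the main technical obstacle to be algebraic bookkeeping: distributing the resulting powers of $\abs{u_1^+}_{L^2}$ and $\abs{u_1^+}_{\dot{H}^1_{\omega_0}}$ into the precise form $\bigl(\varepsilon^{2(1-\theta)}\abs{u_1^+}_{\dot{H}^1_{\omega_0}}^{2(1-\theta)} + \abs{u_1^+}_{\dot{H}^1_{\omega_0}}^{3(1-\theta)}\bigr)\abs{u_1^+}_{L^2}^2$ stated in the lemma, for which the freedom in the interpolation exponent $\theta \in (0,1)$ and the defining bound $\abs{u_1^+}_{\dot{H}^1_{\omega_0}} \lesssim \varepsilon\Lambda$ on $U_1$ should provide enough flexibility.
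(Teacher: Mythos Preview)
Your overall plan matches the paper's proof closely: split by the disjoint Fourier supports of $(u_1^+)^2$, $u_1^+ u_1^-$, $(u_1^-)^2$; replace $n(k)^{-1}$ on each support by its natural local approximation; collect the substitution errors together with the inherited $\mathcal{R}_\varepsilon(u_1)$ into $\widetilde{\mathcal{R}}_\varepsilon$; and invoke homogeneity for the derivative bounds. The only substantive difference is in how you estimate the error integrals.

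The paper handles $\widetilde{\mathcal{R}}_1 = \int (n(k)^{-1} - n(2\omega_0,0)^{-1})\, |\widehat{(u_1^+)^2}|^2 \dk$ by writing $|k - (2\omega_0,0)|^{1/2} \lesssim |k - (\omega_0,0) - s|^{1/2} + |s - (\omega_0,0)|^{1/2}$ inside the convolution and applying Young's inequality, arriving at $|\widetilde{\mathcal{R}}_1|^{1/2} \lesssim \||\cdot - (\omega_0,0)|^{1/2}\hat{u}_1^+\|_{L^2} \|\hat{u}_1^+\|_{L^1}$, then interpolating. Your frequency-shift route via $v = \ee^{-\I\omega_0 x}u_1^+$ is a legitimate alternative, but one point needs care: the Taylor weight on $\supp\widehat{(u_1^+)^2}$ is $|k - (2\omega_0,0)|$, so after shifting you face $\int |k|\, |\widehat{v^2}|^2 \dk = \||\Diff|^{1/2}(v^2)\|_{L^2}^2$, not $\|\nabla(v^2)\|_{L^2}^2$ as you write, and since the support lies in $B_{2\delta}(0)$ one cannot bound $|k| \lesssim |k|^2$. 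The fix is immediate: Cauchy--Schwarz gives $\int |k|\, |\widehat{v^2}|^2 \dk \leq \|\nabla(v^2)\|_{L^2} \|v^2\|_{L^2} \lesssim |v|_\infty^2\, |\nabla v|_{L^2}\, |v|_{L^2}$, and interpolating $|v|_\infty$ via Lemma~\ref{lemma:interpolation} yields $|u_1^+|_{\dot H^1_{\omega_0}}^{3-2\theta} |u_1^+|_{L^2}^{1+2\theta}$, the same bound the paper reaches. Your $|k|^2$ weight for $\widetilde{\mathcal{R}}_2$ is in fact correct (sharper than the paper's mean-value bound $|k|$), since $c'(0)=0$. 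With that one correction your argument goes through and is arguably more transparent than the convolution splitting.
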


\begin{proof}
Since $\supp \hat{u}_1^\pm \subset B_\pm$, \(\overline{u_1^+} = u_1^-\) and $u_\mathrm{q}(u_1) = -n(\Diff)^{-1}(u_1^++u_1^-)^2$,
we find that
\begin{align*}
\widetilde{\mathcal{I}}_\varepsilon(u_1)
& = \big|\sqrt{n(\Diff)} u_1^+  \big|^2_{L^2}  + c_0 \varepsilon^2 \big|u_1^+\big|_{L^2}^2 \\
& \qquad\mbox{}
+\big|\sqrt{n^{-1}(\Diff)} (u_1^+)^2 \big|^2_{L^2} + 2 \big|\sqrt{n^{-1}(\Diff)} (u_1^+ u_1^-)^2 \big|^2_{L^2}+\mathcal{R}_\varepsilon(u_1).
\end{align*}

Noting that $\text{supp}(\mathcal{F}[(u_1^+)^2])\subset B_{2\delta}(2\omega_0,0)$,
we expand \(n^{-1}(k)\) around the centre of this disc, so that
\begin{align*}
\big|\sqrt{n^{-1}(\Diff)} &(u_1^+)^2 \big|^2_{L^2} \nonumber\\
& \hspace{-1cm}=    \frac{1}{n(2\omega_0,0)}\big| (u_1^+)^2 \big|^2_{L^2} 
+\underbrace{\int_{\mathbb{R}^2}\left(\frac{1}{n(k)}- \frac{1}{n(2\omega_0,0)}\right)
\abs{\mathcal{F}[(u_1^+)^2]}^2 \dk}_{\displaystyle
:=\widetilde{\mathcal{R}}_1(u_1^+)}
\end{align*}
Observe that \(n^{-1}\) is smooth with bounded derivative on \(B_{2\delta}(2\omega_0,0)\), so that, by the mean-value theorem, and monotonicity properties of the square root,
\begin{align*}
\left| \frac{1}{n(k)}- \frac{1}{n(2\omega_0,0)} \right|^\frac{1}{2} &\lesssim \abs{k-(2\omega_0,0)}^\frac{1}{2}
\lesssim \abs{k-(\omega_0,0)-s}^\frac{1}{2} + \abs{s-(\omega_0,0)}^\frac{1}{2}
\end{align*}
for $k \in B_{2\delta}(2\omega_0,0)$. Combining this fact with Young's inequality and the Cauchy-Schwarz
inequality, we find that
\begin{align*}
|\widetilde{\mathcal{R}}_1(u_1^+)|^\frac{1}{2} &\lesssim   \norm{\int \abs{k-(\omega_0,0)-s}^\frac{1}{2} |\hat{u}_1^+(k-s)|  |\hat{u}_1^+(s)| \ds}_{L^2} \nonumber \\ 
&\qquad \mbox{}+ \norm{\int \abs{s-(\omega_0,0)}^\frac{1}{2} |\hat{u}_1^+(k-s)|  |\hat{u}_1^+(s)| \ds}_{L^2} \nonumber \\
& = 2\norm{|k-(\omega_0,0)|^{\frac{1}{2}}\hat{u}_1^+}_{L^2}\norm{\hat{u}_1^+}_{L^1} \nonumber \\
& \lesssim \norm{|k-(\omega_0,0)|\hat{u}_1^+}_{L^2}^{\frac{1}{2}}\norm{\hat{u}_1^+}_{L^2}^\frac{1}{2} \norm{\hat{u}_1^+}_{L^1} \\
&\lesssim \norm{u_1^+}_{\dot{H}_{\omega_0}^1}^{\frac{3}{2}-\theta} \norm{u_1^+}_{L^2}^{\frac{1}{2}+\theta},
\end{align*}
where we have also used Lemma~\ref{lemma:interpolation}.

Similarly, since $\text{supp}(\mathcal{F}[u_1^+u_1^-])\subset B_{2\delta}(0,0)$ we write
$$n(k)=p(|k|,\tfrac{k_2}{k_1}), \qquad p(x_1,x_2)=c(x_1)(1+2x_2^2)^\frac{1}{2}-c_0$$
and approximate $n(k)$ by $p(0,\tfrac{k_2}{k_1})$, so that
\begin{align*}
\big|\sqrt{n^{-1}(\Diff)} &(u_1^+ u_1^-)^2 \big|^2_{L^2}\\ 
&\hspace{-1cm}=\int_{\mathbb{R}^2}\bigg[\left(1+\frac{2k_2^2}{k_1^2}\right)^\frac{1}{2}-c_0\bigg]^{-1}\abs{\mathcal{F}[u_1^+ u_1^-]}^2 \dk\\
&\quad +\underbrace{\int_{\mathbb{R}^2}\bigg(n(k)^{-1}-\bigg[\left(1+\frac{2k_2^2}{k_1^2}\right)^\frac{1}{2}-c_0\bigg]^{-1}\bigg)\abs{\mathcal{F}[u_1^+ u_1^-]}^2 \dk}_{\displaystyle :=\widetilde{\mathcal{R}}_2(u_1^+)},
\end{align*}
(note that $p(0,x_2)= (1+2x_2^2)^\frac{1}{2}-c_0 \neq 0$
since  \(c_0 \in (0,1)\)). Observing that
$$\frac{\diff}{\diff x_1}\left( \frac{1}{p(x_1,x_2)}\right)
=\frac{-c^\prime(x_1)}{(1+2x_2^2)^{\frac{1}{2}}(c(x_1)-c_0(1+2x_2^2)^{-\frac{1}{2}})^2}
$$
is bounded on the set $\{(x_1,x_2): |x_1| \leq 2\delta, x_2 \geq 0\}$, we find from the mean-value theorem that
$$
\bigg|n(k)^{-1}-\bigg[\left(1+\frac{2k_2^2}{k_1^2}\right)^\frac{1}{2}-c_0\bigg]^{-1}\bigg|
\lesssim |k|^\frac{1}{2} 
$$
for $k \in B_{2\delta}(0,0)$. Using the inequality
$$|k|^\frac{1}{2} \lesssim |k - (\omega_0,0) -s|^\frac{1}{2} + |s + (\omega_0,0)|^\frac{1}{2}$$
and proceeding as above thus yields
\begin{gather*}
\begin{aligned}
|\widetilde{\mathcal{R}}_2(u_1^+)|^\frac{1}{2} &\lesssim   \norm{\int \abs{k-(\omega_0,0)-s}^\frac{1}{2} |\hat{u}_1^+(k-s)|  |\hat{u}_1^-(s)| \ds}_{L^2}\\ 
&\qquad \mbox{}+ \norm{\int \abs{s+(\omega_0,0)}^\frac{1}{2} |\hat{u}_1^+(k-s)|  |\hat{u}_1^-(s)| \ds}_{L^2}\\
& \leq \norm{|k-(\omega_0,0)|^{\frac{1}{2}}\hat{u}_1^+}_{L^2}\norm{\hat{u}_1^-}_{L^1} + \norm{|k+(\omega_0,0)|^{\frac{1}{2}}\hat{u}_1^-}_{L^2}\norm{\hat{u}_1^+}_{L^1}\\
& = 2\norm{|k-(\omega_0,0)|^{\frac{1}{2}}\hat{u}_1^+}_{L^2}\norm{\hat{u}_1^+}_{L^1}\\
&\lesssim \norm{u_1^+}_{\dot{H}_{\omega_0}^1}^{\frac{3}{2}-\theta} \norm{u_1^+}_{L^2}^{\frac{1}{2}+\theta}.
\end{aligned}
\end{gather*}

Finally, note that $\diff \widetilde{\mathcal{R}}_j[u_1^+](u_1^+)$ and $\diff^2 \widetilde{\mathcal{R}}_j[u_1^+](u_1^+,u_1^+)$
satisfy the same estimates as $\widetilde{\mathcal{R}}_j(u_1^+)$ since these functionals are homogeneous in $u_1^+$.
The stated result thus follows by defining
\[\widetilde{R}_\varepsilon(u_1^+)=\widetilde{\mathcal R}_1(u_1^+)+\widetilde{\mathcal R}_2(u_1^+)
+\mathcal{R}_\varepsilon(u_1^++u_1^-)\]
and noting that we can replace $u_1$ by $u_1^+$ in the estimates for $\mathcal{R}_\varepsilon$
because the mapping
\(u_1^- \mapsto u_1^+\) defines isometric anti-isomorphisms
 $\chi^-(\Diff)L^2(\R^2) \rightarrow \chi^+(\Diff)L^2(\R^2)$ and  $\chi^-(\Diff)\dot H_{\omega_0}^1(\R^2) \rightarrow \chi^+(\Diff)\dot H_{\omega_0}^1(\R^2)$.
\end{proof}

The next step is to replace the symbol \(n\) with the elliptic operator
$$
\tilde{n}(k)=\tfrac{1}{2}\partial_{k_1}^2n(\omega_0,0)(k_1-\omega)^2+\tfrac{1}{2}\partial_{k_2}^2n(\omega_0,0)k_2^2
$$
appearing in the DS equation \eqref{eq:ds}. However we cannot merely expand the symbol
\(n\) around the centre of the disc \(B_+\) because the simple estimate
\begin{equation}\label{eq:n-tilde n}
n(k) = \tilde n(k) + \bigO(|k-(\omega_0,0)|^3), \qquad k \in B_+,
\end{equation}
leads to an insufficient remainder term. This difficulty is overcome using a change of variables.

\begin{lemma}\label{lemma:tilde map}
The formula
$$\tilde{u}_1^+ = \left(\frac{n(\Diff)}{\tilde n(\Diff)}\right)^{\frac{1}{2}} u_1^+$$
defines automorphisms on \(\chi^+(\mathrm{D})L^2(\mathbb{R}^2)\)
and \(\chi^+(\Diff)\dot H_{\omega_0}^1(\R^2)\).
\end{lemma}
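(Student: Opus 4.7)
The plan is to reduce the statement to a property of the Fourier symbol. Since $\chi^+(\Diff)L^2(\R^2)$ consists of $L^2$-functions whose Fourier transforms are supported in $\overline{B_+}$, and since both $n(\Diff)/\tilde n(\Diff)$ and the weight $((|k_1|-\omega_0)^2+k_2^2)^{1/2}$ defining the $\dot H_{\omega_0}^1$-norm are Fourier multipliers (which commute), it suffices to show that the symbol
\[
\kappa(k) := \left(\frac{n(k)}{\tilde n(k)}\right)^{1/2}
\]
extends to a continuous, bounded function on $\overline{B_+}$ which is bounded below by a positive constant. Multiplication by such a symbol, together with multiplication by its reciprocal, then yields the desired bounded inverse on both $\chi^+(\Diff)L^2(\R^2)$ and $\chi^+(\Diff)\dot H_{\omega_0}^1(\R^2)$.

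To verify these bounds on $\kappa$, I would first check that $n$ is smooth on $\overline{B_+}$: this holds because $\overline{B_+}$ is a fixed compact set with $k_1$ bounded away from $0$ and $|k|$ bounded away from $0$ and $\infty$, so each factor entering the definition of $m$ is smooth there. The function $\tilde n$ is a polynomial and hence smooth. Next, both $n$ and $\tilde n$ vanish at $(\omega_0,0)$ and at no other point of $\overline{B_+}$. For $\tilde n$ this is immediate from its definition, together with the positivity of the coefficients $a_1,a_2$ coming from the elliptic-elliptic nature of the DS equation \eqref{eq:ds}. For $n$ this follows from $c_0=c(\omega_0)$ being a strict minimum of $c$ and $\tilde n \leq n$ being strictly positive off $(\omega_0,0)$.

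Since $n(\omega_0,0) = 0$, $\nabla n(\omega_0,0) = 0$ (by $c'(\omega_0)=0$ and the evenness of $m$ in $k_2$), and $\partial_{k_1}^2 n(\omega_0,0) = 8 a_1$, $\partial_{k_2}^2 n(\omega_0,0) = 8 a_2$, $\partial_{k_1}\partial_{k_2} n(\omega_0,0) = 0$, the quadratic form $\tilde n$ is precisely the second-order Taylor polynomial of $n$ at $(\omega_0,0)$. Taylor's theorem then gives the estimate \eqref{eq:n-tilde n} uniformly on $\overline{B_+}$, while ellipticity supplies $\tilde n(k) \gtrsim |k-(\omega_0,0)|^2$ on the same set. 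Combining these estimates,
\[
\left|\frac{n(k)}{\tilde n(k)} - 1 \right| \lesssim |k-(\omega_0,0)|
\]
as $k \to (\omega_0,0)$, so that $n/\tilde n$ extends continuously to $\overline{B_+}$ with value $1$ at $(\omega_0,0)$. Away from $(\omega_0,0)$ the quotient is a strictly positive continuous function on a compact set, hence bounded above and below by positive constants; combined with the near-limit behaviour just obtained, the same holds on all of $\overline{B_+}$, which is what was needed.

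The only delicate point is the behaviour of $\kappa$ at the common zero $(\omega_0,0)$ of $n$ and $\tilde n$: naively one faces a $0/0$ indeterminacy, but the matching Taylor expansions—built into the very definition of $\tilde n$—resolve it cleanly. Beyond this the argument is essentially a compactness and continuity statement about commensurate symbols, and no further technical obstacle is expected.
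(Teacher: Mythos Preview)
Your argument is correct and follows the same route as the paper: both use the Taylor expansion \eqref{eq:n-tilde n} together with the ellipticity of $\tilde n$ to show that $n/\tilde n$ and its reciprocal are bounded on $B_+$, so that the multiplier and its inverse act boundedly on both weighted $L^2$ spaces. One minor slip: the inequality $\tilde n \leq n$ you invoke to deduce $n>0$ on $B_+\setminus\{(\omega_0,0)\}$ need not hold (the cubic Taylor remainder can have either sign), but the conclusion follows directly from $c(|k|)(1+2k_2^2/k_1^2)^{1/2} \geq c_0$ with equality only at $k=(\omega_0,0)$.
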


\begin{proof}
It follows from \eqref{eq:n-tilde n} that
\[
\left| \frac{n(k)}{\tilde n(k)} - 1\right|,\ \left| \frac{\tilde{n}(k)}{n(k)} - 1\right| \lesssim |k-(\omega_0,0)| \leq \delta, \qquad k \in B_+.\qedhere
\]
\end{proof}

The next result shows that both the form of \(\widetilde{\mathcal{J}}_\varepsilon(u_1^+)\) and the estimates for its remainder term remain
unchanged when $n$ and $u_1^+$ are replaced by $\tilde{n}$ and $\tilde u_1^+$.

\begin{lemma}\label{K-lemma}
For all $u_1\in U_1$, one has that
\begin{align*}
\widetilde{\mathcal{K}}_\varepsilon( {\tilde u}_1^+) &:= \widetilde{\mathcal{J}}_\varepsilon\left({u}_1^+\right) \nonumber \\
&=
\big|\sqrt{\tilde n(\Diff)} {\tilde u}_1^+\big|^2_{L^2} + c_0 \varepsilon^2 \big| {\tilde u}_1^+ \big|_{L^2}^2 - \frac{1}{n(2\omega_0,0)} \big| ({\tilde u}_1^+)^2 \big|_{L^2}^2\nonumber\\
&\qquad -2 \bigg| \bigg[\left(1+\frac{2 \Diff_2^2}{\Diff_1^2}\right)^\frac{1}{2}-c_0\bigg]^{-\frac{1}{2}} ({\tilde u}_1^+)^2 \bigg|_{L^2}^2 + \widetilde{\mathcal E}_\varepsilon(\tilde u_1^+)
\end{align*}
where
\begin{align*}
&\big| \widetilde{\mathcal E}_\varepsilon({\tilde u}_1^+) \big| + \big|\diff \widetilde{\mathcal E}_\varepsilon[{\tilde u}_1^+]({\tilde u}_1^+) \big| + \big| \diff^2 \widetilde{\mathcal E}_\varepsilon[{\tilde u}_1^+]({\tilde u}_1^+,{\tilde u}_1^+) \big|\\
&\qquad  \lesssim \big( \varepsilon^{2(1-\theta)} + |{\tilde u}_1^+|_{\dot H_{\omega_0}^1}^{2-3\theta}\big) |{\tilde u}_1^+|_{\dot H_{\omega_0}^1} |{\tilde u}_1^+|_{L^2}.
\end{align*}
\end{lemma}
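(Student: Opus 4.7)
The plan is to substitute $u_1^+ = A \tilde u_1^+$ with $A = (\tilde n/n)^{1/2}(\Diff)$, an automorphism on $\chi_+(\Diff)L^2(\R^2)$ and $\chi_+(\Diff)\dot H_{\omega_0}^1(\R^2)$ by Lemma~\ref{lemma:tilde map}, into the representation of $\widetilde{\mathcal J}_\varepsilon(u_1^+)$ furnished by Lemma~\ref{lemma:tildefunctional}, and identify each of the new leading terms while bounding all errors. Writing $u_1^+ = \tilde u_1^+ + w$ with $w = (A-I)\tilde u_1^+$, the symbol bound $|(\tilde n/n)^{1/2}(k)-1| \lesssim |k-(\omega_0,0)|$ from Lemma~\ref{lemma:tilde map} furnishes the decisive smallness estimates
\[
|w|_{L^2} \lesssim |\tilde u_1^+|_{\dot H_{\omega_0}^1}, \qquad |w|_{\dot H_{\omega_0}^1} \lesssim |\tilde u_1^+|_{\dot H_{\omega_0}^1}, \qquad |w|_{L^\infty} \lesssim |\tilde u_1^+|_{\dot H_{\omega_0}^1},
\]
the last via Lemma~\ref{lemma:interpolation}. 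In particular $|u_1^+|_{L^2} \eqsim |\tilde u_1^+|_{L^2}$ and $|u_1^+|_{\dot H_{\omega_0}^1} \eqsim |\tilde u_1^+|_{\dot H_{\omega_0}^1}$, so the remainder $\widetilde{\mathcal R}_\varepsilon(u_1^+)$ inherited from Lemma~\ref{lemma:tildefunctional} automatically fits inside $\widetilde{\mathcal E}_\varepsilon$ after absorbing surplus powers with the $U_1$-bounds $|\tilde u_1^+|_{L^2} \leq \Lambda$ and $|\tilde u_1^+|_{\dot H_{\omega_0}^1} \lesssim \varepsilon\Lambda$.

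The first quadratic term transforms exactly, $|\sqrt{n(\Diff)}u_1^+|_{L^2}^2 = |\sqrt{\tilde n(\Diff)}\tilde u_1^+|_{L^2}^2$, by construction of $A$. For the second quadratic term, the difference is
\[
c_0\varepsilon^2 \int_{\R^2} \bigl(\tfrac{\tilde n(k)}{n(k)}-1\bigr) |\hat{\tilde u}_1^+(k)|^2 \dk,
\]
which by the symbol estimate and Cauchy--Schwarz is $\lesssim \varepsilon^2 |\tilde u_1^+|_{L^2} |\tilde u_1^+|_{\dot H_{\omega_0}^1} \leq \varepsilon^{2(1-\theta)} |\tilde u_1^+|_{L^2} |\tilde u_1^+|_{\dot H_{\omega_0}^1}$, comfortably within the advertised remainder. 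The bulk of the work lies in the two quartic terms, which I treat uniformly as $|P(\Diff)v|_{L^2}^2$ where $v$ is a quadratic expression in $u_1^\pm$ (either $(u_1^+)^2$ or $u_1^+ u_1^-$) and $P$ is a Fourier multiplier whose symbol is uniformly bounded on the compact Fourier support of $v$ (for the second form using $c_0 \in (0,1)$ to keep the operator $[(1+2k_2^2/k_1^2)^{1/2}-c_0]^{-1}$ finite near the origin). Expanding
\[
|Pv_0|_{L^2}^2 - |P\tilde v_0|_{L^2}^2 = \langle P(v_0 - \tilde v_0),\, P(v_0 + \tilde v_0)\rangle,
\]
where $v_0 - \tilde v_0$ consists of bilinear combinations of $\tilde u_1^\pm$ with $w,\bar w$ together with quadratic-in-$w$ terms, and then using Cauchy--Schwarz, the uniform boundedness of $P$, the interpolation $|v|_{L^4}^2 \lesssim |v|_{L^\infty}|v|_{L^2}$, and the smallness bounds for $w$, yields contributions of the form $|\tilde u_1^+|_{L^2}^{1+2\theta}|\tilde u_1^+|_{\dot H_{\omega_0}^1}^{3-2\theta}$ and $|\tilde u_1^+|_{L^2}^{1+\theta}|\tilde u_1^+|_{\dot H_{\omega_0}^1}^{3-\theta}$, each of which is dominated on $U_1$ by $|\tilde u_1^+|_{\dot H_{\omega_0}^1}^{3-3\theta}|\tilde u_1^+|_{L^2}$.

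The derivative bounds are obtained by the same calculation: every new contribution to $\widetilde{\mathcal E}_\varepsilon$ beyond $\widetilde{\mathcal R}_\varepsilon$ is a homogeneous polynomial in $\tilde u_1^+$ of degree two or four, so its Euler derivatives at $\tilde u_1^+$ along $\tilde u_1^+$ are constant multiples of the term itself, while the derivatives of the inherited $\widetilde{\mathcal R}_\varepsilon$-piece inherit matching bounds directly from Lemma~\ref{lemma:tildefunctional}. The main obstacle is precisely the quartic step: the squared $L^2$-norm structure prevents a single linearisation in $w$, and the factor $|\tilde u_1^+|_{\dot H_{\omega_0}^1}$ in the target bound must be squeezed from the $|k-(\omega_0,0)|$ smallness built into the symbol of $A-I$ and paired against $L^\infty$ (rather than $L^2$) bounds for $\tilde u_1^+$, in exactly the right combination so that the final exponents balance.
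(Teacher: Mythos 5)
Your proposal is correct and follows essentially the same strategy as the paper: substitute the automorphism from Lemma~\ref{lemma:tilde map} into Lemma~\ref{lemma:tildefunctional}, note that the first quadratic term transforms exactly, and estimate the errors in the remaining terms using the symbol smallness $|(\tilde n/n)^{1/2}(k)-1| \lesssim |k-(\omega_0,0)|$, the interpolation estimate of Lemma~\ref{lemma:interpolation}, and the $U_1$-bounds. Your bound $\lesssim \varepsilon^2|\tilde u_1^+|_{L^2}|\tilde u_1^+|_{\dot H_{\omega_0}^1}$ for the $\varepsilon^2$-quadratic error and your final quartic-error exponents both match the paper's, and you correctly observe that the inherited $\widetilde{\mathcal R}_\varepsilon$ term fits inside the target bound after absorbing surplus powers on $U_1$.

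The one place you genuinely diverge in presentation is the quartic error. The paper factors the integrand pointwise as $|a|^4 - |b|^4 = (|a|^2+|b|^2)(|a|+|b|)(|a|-|b|)$ with $a = (\tilde n/n)^{1/2}(\Diff)\tilde u_1^+$ and $b = \tilde u_1^+$, then applies H\"older with exponents $\infty\cdot 2\cdot 2$ and feeds the symbol bound only into the factor $||a|-|b||_{L^2}$. You instead introduce $w = u_1^+ - \tilde u_1^+$ explicitly, write the difference of squared $L^2$-norms as an inner product $\langle P(v_0-\tilde v_0), P(v_0+\tilde v_0)\rangle$, expand $v_0 - \tilde v_0$ bilinearly in $(\tilde u_1^+, w)$, and feed the symbol bound into separate $L^2$, $\dot H^1_{\omega_0}$ and $L^\infty$ estimates for $w$. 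Both are elementary manipulations leading to the same contributions ($|\tilde u_1^+|_{L^2}^{1+2\theta}|\tilde u_1^+|_{\dot H_{\omega_0}^1}^{3-2\theta}$ and a strictly smaller term from the $w^2$ piece), so neither approach buys anything technically; your version makes the role of $w$ more transparent, while the paper's factorization avoids introducing an auxiliary function. Both also correctly invoke $c_0 \in (0,1)$ to keep the Fourier multiplier in the second quartic term bounded near the origin, and both handle the derivative estimates by homogeneity.
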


\begin{proof}
By construction
\begin{align*}
\big|\sqrt{n(\Diff)} u_1^+\big|^2_{L^2}  = \big|\sqrt{\tilde n(\Diff)} \tilde u_1^+\big|^2_{L^2},
\end{align*}
while
\[
c_0 \varepsilon^2 \big|u_1^+\big|^2_{L^2} = c_0 \varepsilon^2 \big|\tilde u_1^+\big|^2_{L^2} + c_0 \varepsilon^2 \widetilde{\mathcal E}_1(\tilde{u}_1^+),
\]
where
\begin{align*}
\widetilde{\mathcal E}_1(\tilde{u}_1^+)&=\left|\Big(\frac{\tilde n(\Diff)}{n(\Diff)}\Big)^\frac{1}{2} \tilde u_1^+\right|^2_{L^2} - \big|\tilde u_1^+\big|^2_{L^2} \\
&=
\norm{ \left| \frac{\tilde{n}(k)}{n(k)} - 1\right|^\frac{1}{2} \hat{\tilde{u}}_1^+ }_{L^2}  \\
& \lesssim 
\norm{|k-(\omega_0,0)|^{\frac{1}{2}}\hat{\tilde{u}}_1^+}_{L^2} \\
& \lesssim
\norm{\tilde{u}_1^+}_{\dot{H}_{\omega_0}^1}^{\frac{1}{2}} \norm{\tilde{u}_1^+}_{L^2}^{\frac{1}{2}}
\end{align*}
because of the estimate $\tilde{n}(k)/n(k)-1=\bigO(|k-(\omega_0,0)|)$ (see equation \eqref{eq:n-tilde n}).

Furthermore
$$
\big| (u_1^+)^2 \big|_{L^2}^2 = \big| (\tilde u_1^+)^2 \big|_{L^2}^2 + \widetilde{\mathcal E}_2(\tilde{u}_1^+),
$$
where
\begin{align*}
\widetilde{\mathcal E}_2(\tilde{u}_1^+)
&=
\norm{\bigg[ \Big(\frac{\tilde n(\Diff)}{n(\Diff)}\Big)^\frac{1}{2} \tilde u_1^+\bigg]^2}_{L^2}^2 -  \big| (\tilde u_1^+)^2 \big|_{L^2}^2 \\
&=\int_{\R^2} \bigg(\left|\Big(\frac{\tilde n(\Diff)}{n(\Diff)}\Big)^\frac{1}{2} \tilde u_1^+\right|^4 - |\tilde u_1^+|^4\bigg)\dx\dz \\
& \leq \bigg(\left|\Big(\frac{\tilde n(\Diff)}{n(\Diff)}\Big)^\frac{1}{2} \tilde u_1^+\right|_\infty^2 + |\tilde u_1^+|_\infty^2\bigg)
\bigg( \bigg|\Big(\frac{\tilde n(\Diff)}{n(\Diff)}\Big)^\frac{1}{2} \tilde u_1^+\bigg|_{L^2} + |\tilde u_1^+|_{L^2}\bigg)\\
& \qquad\quad\mbox{} \times
 \norm{ \left(\Big(\frac{\tilde n(\Diff)}{n(\Diff)}\Big)^\frac{1}{2}-1\right) \tilde u_1^+}_{L^2}\\
&\lesssim |\tilde u_1^+ |^{3-2\theta}_{\dot H^1_{\omega_0}} |\tilde u_1^+ |^{1+ 2\theta}_{L^2};
\end{align*}
between the fourth and fifth lines we have used
Lemmata \ref{lemma:interpolation} and \ref{lemma:tilde map} for the first
factor, Lemma \ref{lemma:tilde map} for the second, and the estimate
$(\tilde{n}(k)/n(k))^{\frac{1}{2}}-1 = \bigO(|k - (\omega_0,0)|)$ (see equation \eqref{eq:n-tilde n})
for the third. Similarly,
$$
 \bigg| \bigg[\left(1+\frac{2 \Diff_2^2}{\Diff_1^2}\right)^\frac{1}{2}-c_0\bigg]^{-\frac{1}{2}} (u_1^+)^2 \bigg|_{L^2}^2 = \bigg| \bigg[\left(1+\frac{2 \Diff_2^2}{\Diff_1^2}\right)^\frac{1}{2}-c_0\bigg]^{-\frac{1}{2}} ({\tilde u}_1^+)^2 \bigg|_{L^2}^2 
+\widetilde{\mathcal E}_3(\tilde{u}_1),
$$
where
\begin{align*}
\widetilde{\mathcal E}_3(\tilde{u}_1^+)
& =
 \bigg| \bigg[\left(1+\frac{2 \Diff_2^2}{\Diff_1^2}\right)^\frac{1}{2}-c_0\bigg]^{-\frac{1}{2}} \left(\!\Big(\frac{\tilde n(\Diff)}{n(\Diff)}\Big)^\frac{1}{2} \tilde u_1^+\right)^2 \bigg|_{L^2}^2  \\
& \hspace{1cm}\mbox{}
 - \bigg| \bigg[\left(1+\frac{2 \Diff_2^2}{\Diff_1^2}\right)^\frac{1}{2}-c_0\bigg]^{-\frac{1}{2}} ({\tilde u}_1^+)^2 \bigg|_{L^2}^2
\end{align*}
may be estimated in the same way as $\widetilde{\mathcal E}_2(\tilde{u}_1)$ because
$$\left(1+\frac{2 k_2^2}{k_1^2}\right)^\frac{1}{2}-c_0 \geq 1 - c_0 > 0.$$

Finally, note that $\diff \widetilde{\mathcal{E}}_j[u_1^+](u_1^+)$ and $\diff^2 \widetilde{\mathcal{E}}_j[u_1^+](u_1^+,u_1^+)$
satisfy the same estimates as $\widetilde{\mathcal{E}}_j(u_1^+)$ since these functionals are homogeneous in $u_1^+$.
The stated result thus follows by defining
\[\widetilde{\mathcal E}_\varepsilon(\tilde{u}_1^+)=\widetilde{\mathcal E}_1(\tilde{u}_1^+)+\widetilde{\mathcal E}_2(\tilde{u}_1^+)
+\widetilde{\mathcal E}_3(\tilde{u}_1^+)+\widetilde{\mathcal R}_\varepsilon
\left(\Big(\frac{\tilde n(\Diff)}{n(\Diff)}\Big)^\frac{1}{2} \tilde u_1^+\right)
\]
and noting that we can replace $u_1^+$ by $\tilde{u}_1^+$ in the estimates for $\widetilde{\mathcal R}_\varepsilon$
because of Lemma \ref{lemma:tilde map}.
\end{proof}

Finally, we apply the DS scaling by writing
\begin{equation*}
\tilde{u}_1^+(x,y)=\frac{1}{2}\varepsilon\zeta(\varepsilon x,\varepsilon y)\mathrm{e}^{\mathrm{i}\omega_0 x}.
\end{equation*}
The mapping $\tilde{u}_1^+\mapsto \zeta$ defines isomorphisms $\chi^+(\mathrm{D})L^2(\mathbb{R}^2)\rightarrow \chi_\varepsilon (\mathrm{D})L^2(\mathbb{R}^2)$ and
$\chi^+(\mathrm{D})\dot H_{\omega_0}^1(\R^2)\rightarrow \chi_\varepsilon (\mathrm{D})\dot H^1(\R^2)$,
where \(\chi_\varepsilon =\chi_{B_{\delta/\varepsilon}(0)}\) is the characteristic function of the ball of radius \(\delta/\varepsilon\),
since
\begin{equation*}
\hat{\tilde{u}}_1^+(k)=\frac{1}{2}\varepsilon^{-1}\hat{\zeta}\left(\frac{k_1-\omega_0}{\varepsilon},\frac{k_2}{\varepsilon}\right).
\end{equation*}
Defining
$$
\mathcal{T}_\varepsilon(\zeta)=\varepsilon^{-2}\widetilde{\mathcal{K}}_\varepsilon(\tilde{u}_1^+(\zeta)),
$$
we find that
\begin{equation}
\mathcal{T}_\varepsilon(\zeta)=\mathcal{T}_0(\zeta)+ {\mathcal{E}}_\varepsilon (\zeta),
\label{Def of Te}
\end{equation}
where $\mathcal{T}_0$ is the DS functional \eqref{ds-functional} and
$$
{\mathcal{E}}_\varepsilon(\zeta)=\varepsilon^{-2}\widetilde{\mathcal E}_\varepsilon(\tilde{u}_1^+(\zeta)).
$$
It follows from the calculations $\norm{\tilde{u}_1^+(\zeta)}_{L^2}=\norm{\zeta}_{L^2}$ and 
$\norm{\tilde{u}_1^+(\zeta)}_{\dot{H}_{\omega_0}^1}=\varepsilon\norm{\zeta}_{\dot{H}^1}$ and the
estimates for $\widetilde{\mathcal E}_\varepsilon$ given in Lemma \ref{K-lemma} that
\begin{align}
\abs{{\mathcal{E}}_\varepsilon (\zeta)}+\abs{\mathrm{d}{\mathcal{E}}_\varepsilon [\zeta](\zeta)}+\abs{\mathrm{d^2}{\mathcal{E}}_\varepsilon [\zeta](\zeta,\zeta)} &\lesssim   \varepsilon^{1-3\theta} \big( 1 + |\zeta|_{\dot H^1}^{2-3\theta}\big) |\zeta|_{\dot H^1} |\zeta|_{L^2} \nonumber \\
&\lesssim \varepsilon^{\frac{1}{2}} |\zeta|_{H^1}^{2}, \label{Final error estimate}
\end{align}
where we have fixed \(\theta \in (0,\frac{1}{6})\). We study ${\mathcal T}_\varepsilon$
as a functional on the set
\[
B_M(0)=\{\zeta\in H_\varepsilon^1(\mathbb{R}^2)\colon \norm{\zeta}_{H^1}\leq M\},
\]
where \(H_\varepsilon^s(\R^2)=\chi_\varepsilon(\Diff)H^s(\R^2)\) and $M$ is
large enough that $B_M(0)$ contains nontrivial critical points of ${\mathcal T}_\varepsilon$
(see Section \ref{sec:existence} below); the constant $\Lambda$ defining
$U_1$ is chosen proportionally to $M$. Altogether we have established the following result.

\begin{lemma} \label{prop:weak trace back} 
Let \(\zeta \mapsto  u = u_1 + u_2\) be the inverse of the mapping
\[
u \mapsto u_1  \mapsto u_1^+ \mapsto \tilde u_1^+ \mapsto \zeta
\]
constructed above.
\begin{itemize}
\item[(i)]
Each critical point $\zeta_\infty \in B_M(0)$ of ${\mathcal T}_\varepsilon$ defines a critical point
\begin{equation}\label{eq:def u_infty}
u_\infty = u_1(\zeta_\infty) + u_2(u_1(\zeta_\infty))
\end{equation}
of ${\mathcal I}_\varepsilon$ in $U_1$, and any critical point $u_1$ of $\mathcal I_\varepsilon$ with $u_1 \in U_1$ defines a unique
critical point $\zeta_\infty \in B_M(0)$ of ${\mathcal T}_\varepsilon$.\\[-8pt]

\item[(ii)]
Each Palais--Smale sequence $\{\zeta_n\} \subset B_M(0)$ for ${\mathcal T}_\varepsilon$ generates a Palais--Smale
sequence $\{u_n\} \subset U_1$ for ${\mathcal I}_\varepsilon$, where
\begin{equation}\label{eq:def u_n}
u_n = u_1(\zeta_n) + u_2(u_1(\zeta_n)).
\end{equation}

\item[(iii)]
Suppose that $\{\zeta_n\} \subset B_M(0)$ converges weakly in $H^1_\varepsilon(\R^2)$ to
$\zeta_\infty \in B_M(0)$. The corresponding sequence $\{u(\zeta_n)\}$ given by \eqref{eq:def u_n} converges weakly in
$X$  to \(u_\infty = u(\zeta_\infty)\) given by \eqref{eq:def u_infty}.
\end{itemize}
\end{lemma}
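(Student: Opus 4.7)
My plan is to trace the chain of reductions
\[
\zeta \;\longleftrightarrow\; \tilde u_1^+ \;\longleftrightarrow\; u_1^+ \;\longleftrightarrow\; u_1 \;\longleftrightarrow\; u = u_1 + u_\mathrm{q}(u_1) + u_\mathrm{c}(u_1)
\]
constructed in Sections \ref{sec:function spaces}--\ref{sec:reduction} and verify that each arrow preserves the relevant structure for each of (i)--(iii).

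For part (i), the first three arrows are bounded linear isomorphisms (the DS scaling, Lemma \ref{lemma:tilde map}, and the reality constraint $u_1^- = \overline{u_1^+}$), and the successive functionals $\mathcal{T}_\varepsilon$, $\widetilde{\mathcal{K}}_\varepsilon$, $\widetilde{\mathcal{J}}_\varepsilon$, $\widetilde{\mathcal{I}}_\varepsilon$ are related by these pull-backs and the scalar factor $\varepsilon^2$, so their critical points correspond bijectively. The final arrow is the variational reduction itself: the fixed-point equation of Lemma \ref{lemma:reduction} together with the definition of $u_\mathrm{q}$ is precisely \eqref{eq:split 2}, so $\diff \mathcal{I}_\varepsilon[u_1+u_2(u_1)]$ already annihilates $X_2$. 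By the chain rule applied to $\widetilde{\mathcal{I}}_\varepsilon(u_1)=\mathcal{I}_\varepsilon(u_1+u_2(u_1))$, the $X_2$-piece $\diff u_2[u_1](v_1)$ drops out, giving
\[
\diff \widetilde{\mathcal{I}}_\varepsilon[u_1](v_1) = \diff \mathcal{I}_\varepsilon[u_1+u_2(u_1)](v_1), \qquad v_1 \in X_1,
\]
so $u_1 \in U_1$ is critical for $\widetilde{\mathcal{I}}_\varepsilon$ if and only if $u_1 + u_2(u_1)$ is critical for $\mathcal{I}_\varepsilon$ on $X$.

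For part (ii), the same chain of isomorphisms together with the norm equivalences $\norm{\zeta}_{L^2}=\norm{\tilde u_1^+}_{L^2}$, $\norm{\tilde u_1^+}_{\dot{H}_{\omega_0}^1}=\varepsilon \norm{\zeta}_{\dot{H}^1}$, Lemma \ref{lemma:tilde map}, and Remark \ref{rem:norms the same} transfer boundedness of $\{\zeta_n\}$ in $H^1_\varepsilon$ to boundedness of $\{u_1(\zeta_n)\}$ in $X_1$; the estimate of Lemma \ref{lemma:reduction} extends this to boundedness of $\{u_n\}$ in $X$. Any test direction $v \in X$ decomposes as $v_1+v_2$, the $v_2$-component is annihilated by $\diff \mathcal{I}_\varepsilon[u_n]$ exactly as in (i), and the $v_1$-component is controlled by $\norm{\diff \mathcal{T}_\varepsilon[\zeta_n]}_{(H_\varepsilon^1)^*}$ through the duals of the bounded linear isomorphisms, so $\diff \mathcal{I}_\varepsilon[u_n] \to 0$ in $X^*$.

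Part (iii) is where I expect the main obstacle. Every linear step in the chain preserves weak convergence automatically, so the only substantive issue is the weak continuity of the nonlinear solution map $u_1 \mapsto u_2(u_1)=u_\mathrm{q}(u_1)+u_\mathrm{c}(u_1)$. The quadratic term $u_\mathrm{q}(u_1)=-n(\Diff)^{-1}(1-\chi(\Diff))u_1^2$ is weakly continuous $X_1 \to X_2$ because the squaring map is weakly continuous $X \to Z$ by Corollary \ref{EL mapping} and $n(\Diff)^{-1}\colon Z_2 \to X_2$ is a bounded isomorphism by Lemma \ref{lemma:n-isomorphism}. For the cubic correction $u_\mathrm{c}$ I pass to the limit in the fixed-point equation: the sequence $\{u_\mathrm{c}(u_1(\zeta_n))\}$ is uniformly bounded in $X_2$ by Lemma \ref{lemma:reduction}, hence has a subsequence $u_\mathrm{c}(u_1(\zeta_{n_k})) \rightharpoonup u_\mathrm{c}^\star \in X_2$. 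Since $G \colon X_1 \times X_2 \to X_2$ is weakly continuous (by the same two ingredients), passing to the weak limit in $u_\mathrm{c}(u_{1,n_k}) = G(u_{1,n_k},u_\mathrm{c}(u_{1,n_k}))$ yields $u_\mathrm{c}^\star = G(u_{1,\infty},u_\mathrm{c}^\star)$. The uniqueness of the fixed point in $\overline{B}_{r(u_{1,\infty})}(0)$ asserted in Lemma \ref{lemma:reduction} forces $u_\mathrm{c}^\star = u_\mathrm{c}(u_{1,\infty})$, and uniqueness of the subsequential weak limit upgrades the convergence to the full sequence, so $u(\zeta_n) \rightharpoonup u_\infty = u(\zeta_\infty)$ in $X$.
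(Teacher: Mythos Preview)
Your proposal is correct and follows essentially the same approach as the paper. The paper treats parts (i) and (ii) as immediate from the construction and only spells out (iii), where your argument---boundedness of $\{u_{\mathrm{c},n}\}$, passage to a weakly convergent subsequence, weak continuity of $G$ to identify the limit as a fixed point, uniqueness of the fixed point, and the subsequence principle---matches the paper's proof essentially line for line.
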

\begin{proof} It remains only to establish (iii).  To this end
note that $\{u_{1,n}\}$ converges weakly in $X_1$ to
$u_{1,\infty}=u_1(\zeta_\infty) \in U_1$ and $\{u_\mathrm{q}(u_{1,n})\}$ converges
weakly in $X_2$ to $u_\mathrm{q}(u_{1,\infty})$. Furthermore, $u_{\mathrm{c},n}=u_\mathrm{c}(u_{1,n})$
is the unique solution in $X_2$ of equation \eqref{eq:split G} with $u_1=u_{1,n}$, so that
\[
u_{\mathrm{c},n}=G(u_{1,n},u_{\mathrm{c},n}).\]
Observe that $\{u_{\mathrm{c},n}\}$ is bounded in $X_2$, and
suppose that (a subsequence of) $\{u_{\mathrm{c},n}\}$ converges weakly in $X_2$ to $u_{\mathrm{c},\infty}$;
it follows that
\[
u_{\mathrm{c},\infty}=G(u_{1,\infty},u_{\mathrm{c},\infty})
\]
(because $G: X_1 \times X_2 \to X_2$ is weakly continuous),
so that $u_{\mathrm{c},\infty} = u_\mathrm{c}(u_{1,\infty})$ (the fixed-point equation
$u_\mathrm{c} = G(u_{1,\infty}, u_\mathrm{c})$ has a unique solution in $X_2$).
This argument shows that any 
weakly convergent subsequence of $\{u_{\mathrm{c},n}\}$ has weak limit $u_\mathrm{c}(u_{1,\infty})$,
so that $\{u_{\mathrm{c},n}\}$ itself converges weakly to $u_\mathrm{c}(u_{1,\infty})$ in $X_2$.
Altogether we conclude that $\{u_{1,n}+u_{2,n}\}$ with $u_{2,n}=u_\mathrm{q}(u_{1,n})+u_{\mathrm{c},n}$ converges
weakly in $X$ to $u_\infty=u_{1,\infty} + u_{2,\infty}$ with $u_{2,\infty}=u_\mathrm{q}(u_{1,\infty})+u_{\mathrm{c},\infty}$.
\end{proof}

\section{Existence theory} \label{sec:existence}

According to \eqref{Def of Te} and \eqref{Final error estimate} the functional ${\mathcal T}_\varepsilon\colon B_M(0) \subset H_\varepsilon^1(\R^2) \to \R$ is a perturbation
of the `limiting' functional ${\mathcal T}_0: H_1^\varepsilon(\R^2) \to \R$ defined in \eqref{ds-functional}. More precisely
$\mathcal{E}_\varepsilon \circ\chi_\varepsilon(\Diff)$ (which coincides with
$\mathcal{E}_\varepsilon$ on $B_M(0) \subset H^1_\varepsilon(\R^2)$) converges uniformly
to zero over $B_M(0) \subset H^1(\R^2)$, and corresponding statements for its derivatives also hold.
In this section we study ${\mathcal T}_\varepsilon$ by perturbative arguments in this spirit.
We write
\begin{equation}
{\mathcal T}_0(\zeta) = {\mathcal Q}(\zeta) - {\mathcal S}(\zeta),
\label{eq:final red func}
\end{equation}
where
\begin{equation}\label{eq:Q}
{\mathcal Q}(\zeta) = \int_{\R^2}(a_1 |\zeta_x|^2 + a_2 |\zeta_y|^2 + a_3 |\zeta|^2) \dx\dy
\end{equation}
is a local quadratic term equivalent to  \(|\zeta|_{H^1}^2\), and
\[
{\mathcal S}(\zeta) =  \big| \sqrt{L(\Diff)} |\zeta|^2 \big|_{L^2}^2
\]
with
\[
L(\Diff)=\frac{1}{16 n(2\omega_0,0)}+\frac{1}{8}\left[\left(1+\frac{2 \Diff_2^2}{\Diff_1^2}\right)^\frac{1}{2}-c_0\right]^{-1}
\]
is a nonlocal quartic term equivalent to \(||\zeta|^2|_{L^2}^2\). (Note that $L(\Diff)$ is a
nonlocal, zeroth-order operator because  \(n(2\omega_0,0) > 0\) and  \(c_0 \in (0,1)\),
so that the continuous
bilinear form \(\langle L(\Diff) \cdot, \cdot \rangle\) defines an equivalent norm for \(L^2(\R^2)\).)

We seek critical points of ${\mathcal T}_\varepsilon$ by considering its \emph{natural constraint set}
\[
N_\varepsilon = \left\{ \zeta \in B_M(0) \colon \zeta \neq 0, \diff {\mathcal T}_\varepsilon[\zeta](\zeta) = 0 \right\},
\]
whose well-known geometrical interpretation and variational property are recorded in the
following result (e.g.\ see Buffoni, Groves \& Wahl\'{e}n \cite[p.\ 806]{BuffoniGrovesWahlen18}).

\begin{proposition} \label{prop:nc props}  \hspace{2cm}
\begin{itemize}
\item[(i)]
Any ray in $B _M(0)\setminus \{0\} \subset H^1_\varepsilon(\R^2)$ intersects
$N_\varepsilon$ in at most one point. The value of ${\mathcal T}_\varepsilon$ along such a ray attains a strict maximum at this point. (When $\varepsilon=0$ one may take $M=\infty$ and
in that case every ray intersects $N_0$ in precisely one point.)
\item[(ii)]
Any nontrivial critical point of ${\mathcal T}_\varepsilon$ lies
on $N_\varepsilon$, and conversely any critical point of ${\mathcal T}_\varepsilon|_{N_\varepsilon}$
is a (necessarily nontrivial) critical point of ${\mathcal T}_\varepsilon$.
\end{itemize}
\end{proposition}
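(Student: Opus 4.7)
The plan is to reduce both parts to a one-variable analysis along the ray through the origin and a fixed $\zeta \in B_M(0) \setminus \{0\}$. Setting $h(t) := \mathcal{T}_\varepsilon(t\zeta)$ for $t \in (0, M/|\zeta|_{H^1}]$, the chain rule gives $t\,h'(t) = \diff\mathcal{T}_\varepsilon[t\zeta](t\zeta)$, so the condition $t\zeta \in N_\varepsilon$ is equivalent to $h'(t) = 0$. Using the decomposition \eqref{eq:final red func} together with the fact that $\mathcal{Q}$ is quadratic and $\mathcal{S}$ is quartic, I would derive the formulas
\begin{align*}
\tfrac{1}{t} h'(t) &= 2\mathcal{Q}(\zeta) - 4 t^2 \mathcal{S}(\zeta) + \tfrac{1}{t^2}\diff\mathcal{E}_\varepsilon[t\zeta](t\zeta),\\
h''(t) &= 2\mathcal{Q}(\zeta) - 12 t^2 \mathcal{S}(\zeta) + \diff^2 \mathcal{E}_\varepsilon[t\zeta](\zeta,\zeta),
\end{align*}
and invoke \eqref{Final error estimate} with the argument $t\zeta$ to bound each perturbation contribution by $C\varepsilon^{1/2}|\zeta|_{H^1}^2$ uniformly in $t$. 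Since $\mathcal{Q}(\zeta) \eqsim |\zeta|_{H^1}^2$, this already yields $h'(t)/t > 0$ for all small $t > 0$ provided $\varepsilon_\star$ is chosen small enough.

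The main technical step, and the principal obstacle, is the claim that $h''(t^*) < 0$ at every positive zero $t^*$ of $h'$. To establish it, I would substitute the constraint $h'(t^*) = 0$ to obtain $4(t^*)^2 \mathcal{S}(\zeta) = 2\mathcal{Q}(\zeta) + O(\varepsilon^{1/2}|\zeta|_{H^1}^2)$, which together with $\mathcal{Q}(\zeta) \gtrsim |\zeta|_{H^1}^2$ forces $(t^*)^2 \mathcal{S}(\zeta) \gtrsim |\zeta|_{H^1}^2$ for $\varepsilon \in [0,\varepsilon_\star)$ small enough. Plugging this lower bound back into the formula for $h''$ and invoking the error estimate once more gives
\[
h''(t^*) = -8(t^*)^2 \mathcal{S}(\zeta) + O\bigl(\varepsilon^{1/2}|\zeta|_{H^1}^2\bigr) \leq -c|\zeta|_{H^1}^2 < 0,
\]
as required. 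Part (i) then follows by elementary one-variable considerations: two positive zeros $t_1 < t_2$ of $h'$ would force $h'$ to be strictly negative just past $t_1$ and to cross zero from below at $t_2$, contradicting $h''(t_2) < 0$, while combined with $h'(t) > 0$ near $t = 0$ the unique zero $t^*$ (if it lies in the admissible range) is necessarily a strict maximum of $h$. When $\varepsilon = 0$ the function $h(t) = t^2 \mathcal{Q}(\zeta) - t^4 \mathcal{S}(\zeta)$ has the explicit unique positive critical point $t^* = \sqrt{\mathcal{Q}(\zeta)/(2\mathcal{S}(\zeta))}$, which always exists.

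For part (ii) the forward implication is immediate, since any nontrivial critical point $\zeta$ of $\mathcal{T}_\varepsilon$ in $B_M(0)$ satisfies $\diff\mathcal{T}_\varepsilon[\zeta](\zeta) = 0$ and therefore belongs to $N_\varepsilon$. For the converse I would apply Lagrange multipliers with constraint $G(\zeta) := \diff\mathcal{T}_\varepsilon[\zeta](\zeta)$: a critical point of $\mathcal{T}_\varepsilon|_{N_\varepsilon}$ satisfies $\diff\mathcal{T}_\varepsilon[\zeta] = \lambda\,\diff G[\zeta]$ for some $\lambda \in \R$. Testing against $\zeta$ and using the identity $\diff G[\zeta](\zeta) = \diff^2\mathcal{T}_\varepsilon[\zeta](\zeta,\zeta) + \diff\mathcal{T}_\varepsilon[\zeta](\zeta) = h''(1)$, together with the non-degeneracy $h''(1) \neq 0$ already obtained in part (i), then forces $\lambda = 0$, so that $\diff\mathcal{T}_\varepsilon[\zeta] = 0$.
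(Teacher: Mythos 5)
The paper itself does not prove this proposition: it is presented as a well-known geometrical fact with a pointer to Buffoni, Groves \& Wahl\'{e}n, so there is no in-text proof to compare against. Your proposal supplies a correct, self-contained Nehari-manifold argument resting on the same structural decomposition $\mathcal{T}_\varepsilon=\mathcal{Q}-\mathcal{S}+\mathcal{E}_\varepsilon$, with $\mathcal{Q}$ quadratic and $\mathcal{S}$ quartic, that underlies the paper's identities \eqref{eq:only Q}--\eqref{eq:dT}; the reduction to a one-variable problem along a ray is the natural move. I checked each step: the formulas for $h'(t)/t$ and $h''(t)$; the cancellation of the $t^2$ scaling when \eqref{Final error estimate} is applied with argument $t\zeta$, so the error contributions to both $h'(t)/t$ and $h''(t)$ are bounded by $C\varepsilon^{1/2}|\zeta|^2_{H^1}$ uniformly over $t\le M/|\zeta|_{H^1}$; the lower bound $(t^*)^2\mathcal{S}(\zeta)\gtrsim|\zeta|^2_{H^1}$ at any positive zero $t^*$ of $h'$; the resulting $h''(t^*)<0$ for $\varepsilon_\star$ small; the elementary argument that then yields at most one positive zero which is a strict maximum; and the explicit formula when $\varepsilon=0$, for which $\mathcal{S}(\zeta)>0$ whenever $\zeta\neq 0$ because $L(\Diff)$ is a strictly positive zeroth-order multiplier. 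The Lagrange multiplier step in (ii) correctly uses the non-degeneracy $\diff G[\zeta](\zeta)=h''(1)<0$ from (i). All sound.

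Two small points to spell out. First, $\diff G[\zeta](\zeta)=\diff^2\mathcal{T}_\varepsilon[\zeta](\zeta,\zeta)+\diff\mathcal{T}_\varepsilon[\zeta](\zeta)=h''(1)+h'(1)$, and you are tacitly invoking $h'(1)=\diff\mathcal{T}_\varepsilon[\zeta](\zeta)=0$ from $\zeta\in N_\varepsilon$ to identify this with $h''(1)$; make that explicit. Second, the Lagrange multiplier argument as written requires the critical point of $\mathcal{T}_\varepsilon|_{N_\varepsilon}$ to lie in the interior of the closed ball $B_M(0)$, since a boundary point would carry an extra multiplier for the constraint $|\zeta|_{H^1}\le M$. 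In the paper's usage this is harmless because the relevant minimisers satisfy $|\zeta|_{H^1}<M-1$ (Proposition~\ref{prop:lower bounds} and Corollary~\ref{cor:limsup < M-1}), but as a stand-alone proof you should either add the interiority assumption or interpret ``critical point of $\mathcal{T}_\varepsilon|_{N_\varepsilon}$'' in the tangent-space sense. Neither caveat affects the correctness of your argument.
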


In view of Proposition \ref{prop:nc props}(ii) we
proceed by seeking a \emph{ground state}, that is, a minimiser $\zeta^\star$
of ${\mathcal T}_\varepsilon$ over $N_\varepsilon$. We make frequent use of the identities
\begin{eqnarray}
{\mathcal T}_\varepsilon(\zeta)
&=&\tfrac{1}{2} {\mathcal Q}(\zeta)+\tfrac{1}{4}\diff {\mathcal T}_\varepsilon[\zeta](\zeta)
+ {\mathcal E}_\varepsilon(\zeta)-\tfrac{1}{4}\diff {\mathcal E}_\varepsilon[\zeta](\zeta), \label{eq:only Q} \\
{\mathcal T}_\varepsilon(\zeta) &=&
{\mathcal S}(\zeta)+\tfrac{1}{2}\diff {\mathcal T}_\varepsilon[\zeta](\zeta)
+ {\mathcal E}_\varepsilon(\zeta)
-\tfrac{1}{2}\diff {\mathcal E}_\varepsilon[\zeta](\zeta), \label{eq:only S}
\end{eqnarray}
which are obtained using the calculation
\begin{equation}\label{eq:dT}
\diff {\mathcal T}_\varepsilon[\zeta](\zeta)  = 2 {\mathcal Q}(\zeta)  - 4 {\mathcal S}(\zeta) + \diff {\mathcal E}_\varepsilon[\zeta](\zeta),
\end{equation}
to eliminate respectively ${\mathcal S}(\zeta)$ and ${\mathcal Q}(\zeta)$
from \eqref{eq:final red func}.
We begin with some
\emph{a priori} bounds for ${\mathcal T}_\varepsilon|_{N_\varepsilon}$.

\begin{proposition} \label{prop:lower bounds}
The estimates
\[
{\mathcal T}_\varepsilon(\zeta) \geq \tfrac{a}{4}  |\zeta|_{H^1}^2, \qquad\quad  |\zeta|_{H^1} \gtrsim 1, 
\]
where $a=\min(a_1,a_2,a_3)$, hold for all $\zeta \in N_\varepsilon$,
and ${\mathcal T}_\varepsilon(\zeta)<\frac{a}{4} (M-1)^2$, $\zeta \in N_\varepsilon$ implies
that $|\zeta|_{H^1}<M-1$.
\end{proposition}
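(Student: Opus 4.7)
The plan is to derive both assertions directly from the two identities \eqref{eq:only Q} and \eqref{eq:dT}, applied on the natural constraint set and combined with the remainder bound \eqref{Final error estimate} and the Sobolev embedding $H^1(\R^2)\hookrightarrow L^4(\R^2)$.

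First I would establish the coercivity bound $\mathcal{T}_\varepsilon(\zeta)\geq \tfrac{a}{4}|\zeta|_{H^1}^2$. Since $\zeta\in N_\varepsilon$ one has $\diff\mathcal{T}_\varepsilon[\zeta](\zeta)=0$, so identity \eqref{eq:only Q} reduces to
\[
\mathcal{T}_\varepsilon(\zeta)=\tfrac{1}{2}\mathcal{Q}(\zeta)+\mathcal{E}_\varepsilon(\zeta)-\tfrac{1}{4}\diff\mathcal{E}_\varepsilon[\zeta](\zeta).
\]
The local quadratic form satisfies $\mathcal{Q}(\zeta)\geq a|\zeta|_{H^1}^2$ by definition of $a$, while \eqref{Final error estimate} gives $|\mathcal{E}_\varepsilon(\zeta)|+|\diff\mathcal{E}_\varepsilon[\zeta](\zeta)|\lesssim \varepsilon^{1/2}|\zeta|_{H^1}^2$. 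Hence
\[
\mathcal{T}_\varepsilon(\zeta)\geq \left(\tfrac{a}{2}-C\varepsilon^{1/2}\right)|\zeta|_{H^1}^2\geq \tfrac{a}{4}|\zeta|_{H^1}^2
\]
provided $\varepsilon_\star$ is shrunk so that $C\varepsilon_\star^{1/2}\leq \tfrac{a}{4}$.

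Next I would obtain $|\zeta|_{H^1}\gtrsim 1$. Setting $\diff\mathcal{T}_\varepsilon[\zeta](\zeta)=0$ in \eqref{eq:dT} yields
\[
\mathcal{Q}(\zeta)=2\mathcal{S}(\zeta)-\tfrac{1}{2}\diff\mathcal{E}_\varepsilon[\zeta](\zeta).
\]
Because $L(\Diff)$ is a bounded operator on $L^2(\R^2)$ (it defines an equivalent norm there, as noted right after \eqref{eq:Q}), the nonlocal quartic term is controlled by the $L^4$-norm:
\[
\mathcal{S}(\zeta)\lesssim \big||\zeta|^2\big|_{L^2}^2=|\zeta|_{L^4}^4\lesssim |\zeta|_{H^1}^4,
\]
using $H^1(\R^2)\hookrightarrow L^4(\R^2)$. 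Combining with $\mathcal{Q}(\zeta)\geq a|\zeta|_{H^1}^2$ and \eqref{Final error estimate} I obtain
\[
a|\zeta|_{H^1}^2\leq C_1|\zeta|_{H^1}^4+C_2\varepsilon^{1/2}|\zeta|_{H^1}^2,
\]
and, since $\zeta\neq 0$ and $\varepsilon_\star$ is small, the term $C_2\varepsilon^{1/2}|\zeta|_{H^1}^2$ can be absorbed into the left-hand side, leaving $|\zeta|_{H^1}^2\gtrsim 1$.

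Finally, the last assertion is immediate: if $\mathcal{T}_\varepsilon(\zeta)<\tfrac{a}{4}(M-1)^2$ then combining with $\mathcal{T}_\varepsilon(\zeta)\geq\tfrac{a}{4}|\zeta|_{H^1}^2$ gives $|\zeta|_{H^1}^2<(M-1)^2$, hence $|\zeta|_{H^1}<M-1$. The only mild subtlety — really the one place where care is needed — is coordinating the choice of $\varepsilon_\star$ so that the same threshold works for the coercivity step and for absorbing the error term in the Sobolev argument; this is routine since both use only the single estimate \eqref{Final error estimate}.
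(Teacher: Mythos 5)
Your proof is correct and follows essentially the same route as the paper: identity \eqref{eq:only Q} with $\diff\mathcal{T}_\varepsilon[\zeta](\zeta)=0$, the remainder bound \eqref{Final error estimate}, and the Sobolev embedding $H^1(\R^2)\hookrightarrow L^4(\R^2)$ via \eqref{eq:dT} for the lower bound. The paper's version is slightly terser (it writes $\mathcal{T}_\varepsilon(\zeta)\geq\tfrac{1}{4}\mathcal{Q}(\zeta)$ directly, absorbing the $\varepsilon$-smallness implicitly), but the content is identical.
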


\begin{proof}
Let $\zeta \in N_\varepsilon$. Using \eqref{eq:Q} and \eqref{eq:only Q}, we find that
$$
{\mathcal T}_\varepsilon(\zeta)
\geq \tfrac 1 4 {\mathcal Q}(\zeta)\\
\geq \tfrac a 4 |\zeta|_{H^1}^2,
$$
so that in particular ${\mathcal T}_\varepsilon(\zeta)<\frac{a}{4}(M-1)^2$ implies that $|\zeta|_{H^1} <M-1$. The lower bound on $|\zeta|_{H^1}$ follows from the estimate
\[
|\zeta|_{H^1}^2 \lesssim  {\mathcal Q}(\zeta) \lesssim {\mathcal S}(\zeta) + |\mathrm{d}{\mathcal E}_\varepsilon[\zeta](\zeta) |
\lesssim |\zeta|_{H^1}^4 + \varepsilon^{1/2}|\zeta|_{H^1}^2,
\]
in which we have used \eqref{eq:dT}.
\end{proof}

\begin{remark} \label{rem:inf is positive}
Let $\tau_\varepsilon := \inf_{N_\varepsilon} {\mathcal T}_\varepsilon$.
It follows from Proposition \ref{prop:lower bounds} that
$\liminf_{\varepsilon \to 0} \tau_\varepsilon { \gtrsim 1}$
and from \eqref{eq:only S} that
${\mathcal S}(\zeta) \geq \tau_\varepsilon - \bigO(\varepsilon^\frac{1}{2} |\zeta|_{H^1}^2)$ for all $\zeta \in N_\varepsilon$.
\end{remark}

The next result shows how to choose \(M\) and how points on $N_0$ may be approximated by points on $N_\varepsilon$.

\begin{proposition} \label{prop:approximate N0}
Choose $\zeta_0 \in H^1(\R^2)\setminus\{0\}$ and $M$ large enough that
\[
\frac{{\mathcal Q}(\zeta_0)^2}{{\mathcal S}(\zeta_0)}< a(M-1)^2
\]
There exists a unique point $\lambda_0 \zeta_0 \in B_{M-1}(0)$ on the ray through $\zeta_0$
which lies on $N_0$. Furthermore, there exists $\xi_\varepsilon \in N_\varepsilon$ such that
$\lim_{\varepsilon \to 0} |\xi_\varepsilon -\lambda_0\zeta_0|_{H^1} = 0$.
\end{proposition}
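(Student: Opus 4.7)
My plan is to handle the two assertions separately and independently.

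For the first assertion, I would compute directly that $\mathcal{T}_0(\lambda\zeta_0) = \lambda^2 \mathcal{Q}(\zeta_0) - \lambda^4 \mathcal{S}(\zeta_0)$, so the defining condition $\diff\mathcal{T}_0[\lambda\zeta_0](\lambda\zeta_0) = 2\lambda^2\mathcal{Q}(\zeta_0) - 4\lambda^4\mathcal{S}(\zeta_0) = 0$ for $\lambda\zeta_0 \in N_0$ has the unique positive solution $\lambda_0 = \sqrt{\mathcal{Q}(\zeta_0)/(2\mathcal{S}(\zeta_0))}$; uniqueness on the ray is in any case asserted by Proposition~\ref{prop:nc props}(i) (taking $M=\infty$ in the limiting case). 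The bound $|\lambda_0\zeta_0|_{H^1}^2 \leq a^{-1}\mathcal{Q}(\lambda_0\zeta_0) = \mathcal{Q}(\zeta_0)^2/(2a\mathcal{S}(\zeta_0))$, combined with the hypothesis on $M$, yields $|\lambda_0\zeta_0|_{H^1} < (M-1)/\sqrt 2 < M-1$, so $\lambda_0\zeta_0 \in B_{M-1}(0)$.

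For the second assertion, the natural candidate for a point in $H^1_\varepsilon(\R^2)$ approximating $\zeta_0$ is $\tilde\zeta_\varepsilon := \chi_\varepsilon(\Diff)\zeta_0$, which converges to $\zeta_0$ in $H^1(\R^2)$ by dominated convergence; by continuity, $\mathcal{Q}(\tilde\zeta_\varepsilon) \to \mathcal{Q}(\zeta_0)$ and $\mathcal{S}(\tilde\zeta_\varepsilon) \to \mathcal{S}(\zeta_0)$. Using \eqref{eq:dT}, the requirement that $\lambda\tilde\zeta_\varepsilon \in N_\varepsilon$ for $\lambda > 0$ (with $\lambda\tilde\zeta_\varepsilon$ in $B_M(0)$) is equivalent to the scalar equation
\[
h_\varepsilon(\lambda) := 2\mathcal{Q}(\tilde\zeta_\varepsilon) - 4\lambda^2 \mathcal{S}(\tilde\zeta_\varepsilon) + \lambda^{-2}\diff\mathcal{E}_\varepsilon[\lambda\tilde\zeta_\varepsilon](\lambda\tilde\zeta_\varepsilon) = 0.
\]
The estimate \eqref{Final error estimate} controls the perturbative last term by a multiple of $\varepsilon^{1/2}|\tilde\zeta_\varepsilon|_{H^1}^2$, uniformly in $\lambda$ on compact subsets of $(0,\infty)$. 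Consequently $h_\varepsilon$ converges uniformly on any compact interval to the strictly decreasing limit $h_0(\lambda) := 2\mathcal{Q}(\zeta_0) - 4\lambda^2\mathcal{S}(\zeta_0)$, whose unique positive zero is $\lambda_0$.

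Choosing $\lambda_\pm := \lambda_0 \pm \eta$ for small $\eta > 0$, the intermediate-value theorem delivers a zero $\lambda_\varepsilon$ of $h_\varepsilon$ in $(\lambda_-,\lambda_+)$ for every sufficiently small $\varepsilon$, and letting $\eta \to 0$ yields $\lambda_\varepsilon \to \lambda_0$. Setting $\xi_\varepsilon := \lambda_\varepsilon\tilde\zeta_\varepsilon$ gives $\xi_\varepsilon \to \lambda_0\zeta_0$ in $H^1(\R^2)$, while $|\xi_\varepsilon|_{H^1} < M-1$ for small $\varepsilon$ (since the limit satisfies $|\lambda_0\zeta_0|_{H^1} < (M-1)/\sqrt 2$), so that $\xi_\varepsilon \in N_\varepsilon \cap B_M(0)$ as required. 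The only delicate point is to guarantee that the error term $\lambda^{-2}\diff\mathcal{E}_\varepsilon[\lambda\tilde\zeta_\varepsilon](\lambda\tilde\zeta_\varepsilon)$ remains negligible on a fixed neighbourhood of $\lambda_0$ independent of $\varepsilon$, and this is precisely what the uniform estimate \eqref{Final error estimate} delivers, since $|\tilde\zeta_\varepsilon|_{H^1}$ is bounded by $|\zeta_0|_{H^1}$.
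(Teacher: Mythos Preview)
Your argument is correct and follows essentially the same route as the paper: compute $\lambda_0$ explicitly from the quadratic/quartic structure of $\mathcal{T}_0$, approximate $\zeta_0$ by $\chi_\varepsilon(\Diff)\zeta_0$, and use the intermediate-value theorem on the ray derivative to locate $\lambda_\varepsilon$. The only cosmetic differences are that the paper phrases the sign change via the strict maximum of $\lambda\mapsto\mathcal{T}_0(\lambda\zeta_0)$ at $\lambda_0$ (rather than the monotonicity of your $h_0$) and invokes Proposition~\ref{prop:nc props}(i) to note that $\lambda_\varepsilon$ is in fact unique; neither point affects the validity of your version.
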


\begin{proof}
The calculation
$$\mathrm{d}{\mathcal T}_0[\lambda_0 \zeta_0](\lambda_0\zeta_0)
= 2\lambda_0^2 {\mathcal Q}(\zeta_0) - 4\lambda_0^4 {\mathcal S}(\zeta_0)
$$
shows that $\lambda_0\zeta_0 \in N_0$ with
\[
\lambda_0=\left(\frac{{\mathcal Q}(\zeta_0)}{2{\mathcal S}(\zeta_0)}\right)^{1/2}.
\]
It follows that $\lambda_0\zeta_0$ is the unique point on its ray which lies on $N_0$, 
so that 
\begin{equation}
\frac{\diff }{\diff  \lambda} {\mathcal T}_0(\lambda \zeta_0) \Big|_{\lambda=\lambda_0} = 0,
\qquad
\frac{\diff ^2}{\diff  \lambda^2} {\mathcal T}_0(\lambda \zeta_0) \Big|_{\lambda=\lambda_0} < 0.
\label{eq:zero ray}
\end{equation}
(see Proposition \ref{prop:nc props}(i))
and
\begin{equation}
{\mathcal T}_0(\lambda_0\zeta_0)=\tfrac{1}{2}{\mathcal Q}(\lambda_0\zeta_0)=\frac{{\mathcal Q}(\zeta_0)^2}{4{\mathcal S}(\zeta_0)}<\frac{a}{4} (M-1)^2, \label{eq:value of J0}
\end{equation}
so that
\[|\lambda_0 \zeta_0|_{H^1} < M-1\]
according to Proposition \ref{prop:lower bounds}.

Let $\zeta_\varepsilon = \chi_\varepsilon(\Diff)\zeta_0$, so that
$\zeta_\varepsilon\in H^1_\varepsilon(\R^2) \subset H^1(\R^2)$ with
$\lim_{\varepsilon\to 0}|\zeta_\varepsilon-\zeta_0|_{H^1}=0$, and in particular
\[|\lambda_0\zeta_\varepsilon|_{H^1} < M-1.\]
According to \eqref{eq:zero ray} we can find $\tilde{\gamma}>1$ such that $\tilde{\gamma} |\lambda_0 \zeta_\varepsilon |_{H^1} < M$
(so that $\tilde{\gamma}\lambda_0\zeta_\varepsilon \in B_M(0)$) and
\[
\frac{\diff }{\diff  \lambda} {\mathcal T}_0(\lambda \zeta_0) \Big|_{\lambda=\tilde{\gamma}^{-1}\lambda_0}>0,
\qquad
\frac{\diff }{\diff  \lambda} {\mathcal T}_0(\lambda \zeta_0) \Big|_{\lambda=\tilde{\gamma}\lambda_0}<0,
\]
and therefore
\[
\frac{\diff }{\diff  \lambda} {\mathcal T}_\varepsilon(\lambda \zeta_\varepsilon) \Big|_{\lambda=\tilde{\gamma}^{-1}\lambda_0}>0,
\qquad
\frac{\diff }{\diff  \lambda} {\mathcal T}_\varepsilon(\lambda \zeta_\varepsilon) \Big|_{\lambda=\tilde{\gamma}\lambda_0}<0,
\]
by the continuity of \(\mathcal{T}_\varepsilon\) and its derivatives. It follows that there exists $\lambda_\varepsilon\in (\widetilde \gamma^{-1}\lambda_0,\widetilde \gamma \lambda_0)$
with
\[
\frac{\diff }{\diff  \lambda} {\mathcal T}_\varepsilon(\lambda \zeta_\varepsilon) \Big|_{\lambda=\lambda_\varepsilon}=0,
\]
that is, $\xi_\varepsilon:=\lambda_\varepsilon \zeta_\varepsilon \in N_\varepsilon$, and we conclude that
this value of $\lambda_\varepsilon$ is unique (see Proposition \ref{prop:nc props}(i))
and that $\lim_{\varepsilon \to 0} \lambda_\varepsilon = \lambda_0$.
\end{proof}

\begin{corollary}\label{cor:limsup < M-1}
Any minimising sequence $\{\zeta_n\}$ of ${\mathcal T}_\varepsilon|_{N_{\varepsilon}}$ satisfies
\[
\limsup_{n\to\infty} |\zeta_n|_{H^1} <M-1.
\]
\end{corollary}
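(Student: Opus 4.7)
The plan is to show that the infimum $\tau_\varepsilon := \inf_{N_\varepsilon} \mathcal{T}_\varepsilon$ is bounded strictly below $\tfrac{a}{4}(M-1)^2$ with a gap uniform in small $\varepsilon$, and then to combine this with the quadratic lower bound $\mathcal{T}_\varepsilon(\zeta) \geq \tfrac{a}{4}|\zeta|_{H^1}^2$ from Proposition~\ref{prop:lower bounds} applied to the tail of the minimising sequence. The strict version of the conclusion then follows because the separating gap yields a strict pointwise bound on $|\zeta_n|_{H^1}$ for large $n$.

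First I would fix the $\zeta_0 \in H^1(\R^2)\setminus\{0\}$ singled out in Proposition~\ref{prop:approximate N0}, for which $M$ has been chosen so that $\mathcal{Q}(\zeta_0)^2/\mathcal{S}(\zeta_0) < a(M-1)^2$; equivalently, the comparison value
\[
\mathcal{T}_0(\lambda_0 \zeta_0) = \frac{\mathcal{Q}(\zeta_0)^2}{4\mathcal{S}(\zeta_0)}
\]
lies strictly below $\tfrac{a(M-1)^2}{4}$. Proposition~\ref{prop:approximate N0} furnishes a family $\xi_\varepsilon \in N_\varepsilon$ with $\xi_\varepsilon \to \lambda_0 \zeta_0$ in $H^1(\R^2)$ as $\varepsilon \to 0$. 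Combining the $H^1$-continuity of $\mathcal{T}_0$ with the uniform bound $|\mathcal{E}_\varepsilon(\zeta)| \lesssim \varepsilon^{1/2}|\zeta|_{H^1}^2$ from \eqref{Final error estimate}, I obtain $\mathcal{T}_\varepsilon(\xi_\varepsilon) \to \mathcal{T}_0(\lambda_0 \zeta_0)$, so after shrinking $\varepsilon_\star$ if necessary there is a $\delta > 0$ independent of $\varepsilon$ with
\[
\tau_\varepsilon \leq \mathcal{T}_\varepsilon(\xi_\varepsilon) \leq \frac{a(M-1)^2}{4} - \delta
\]
for every $\varepsilon \in [0,\varepsilon_\star)$; the case $\varepsilon = 0$ is immediate with $\xi_0 = \lambda_0 \zeta_0$.

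For any minimising sequence $\{\zeta_n\} \subset N_\varepsilon$ one then has $\mathcal{T}_\varepsilon(\zeta_n) < \tau_\varepsilon + \delta/2$ for all $n$ sufficiently large, so Proposition~\ref{prop:lower bounds} gives
\[
\frac{a}{4}|\zeta_n|_{H^1}^2 \leq \mathcal{T}_\varepsilon(\zeta_n) \leq \frac{a(M-1)^2}{4} - \frac{\delta}{2},
\]
whence $|\zeta_n|_{H^1} \leq \sqrt{(M-1)^2 - 2\delta/a} < M-1$ for large $n$, and passing to $\limsup$ closes the argument. The only mildly delicate point will be the $\varepsilon$-uniformity of the gap $\delta$, but this is automatic from the construction of $\xi_\varepsilon$ in Proposition~\ref{prop:approximate N0} together with the fact that $\mathcal{E}_\varepsilon$ converges to zero uniformly on bounded subsets of $H^1(\R^2)$.
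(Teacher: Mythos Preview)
Your proof is correct and follows essentially the same route as the paper: use the test element $\xi_\varepsilon\in N_\varepsilon$ from Proposition~\ref{prop:approximate N0} together with $\mathcal{T}_\varepsilon(\xi_\varepsilon)\to\mathcal{T}_0(\lambda_0\zeta_0)<\tfrac{a}{4}(M-1)^2$ to bound $\tau_\varepsilon$ strictly below $\tfrac{a}{4}(M-1)^2$, then invoke the coercivity estimate of Proposition~\ref{prop:lower bounds}. Your explicit quantification of the gap $\delta$ is a little more detailed than the paper's presentation, but the argument is the same.
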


\begin{proof} Proposition~\ref{prop:approximate N0} asserts the existence of $\xi_\varepsilon \in N_\varepsilon$ with
$\lim_{\varepsilon \to 0}|\xi_\varepsilon - \lambda_0\zeta_0|_{H^1}=0.$
The continuity of $T_\varepsilon \circ \chi_\varepsilon(\Diff)$ with respect to $\varepsilon$ yields
\[
\lim_{\varepsilon\to 0}
{\mathcal T}_\varepsilon(\xi_\varepsilon)
={\mathcal T}_0(\lambda_0\zeta_0),
\]
and with \eqref{eq:value of J0} we find that
\[
{\mathcal T}_\varepsilon(\xi_\varepsilon)<\tfrac{a}{4} (M-1)^2.
\]
In view of Proposition~\ref{prop:lower bounds} this estimate shows that \(\{\zeta_n\} \subset B_{M-1}(0)\) for sufficiently large values of $n$.
\end{proof}

The next step is to show that there is a minimising sequence for ${\mathcal T}_\varepsilon|_{N_\varepsilon}$
which is also a Palais--Smale sequence.

\begin{proposition}\label{prop:minimising sequence}
There exists a minimising sequence $\{\zeta_n\} \subset  B_{M-1}(0)$
of ${\mathcal T}_\varepsilon|_{N_\varepsilon}$ such that 
\[
\lim_{n \to \infty}|\diff {\mathcal T}_\varepsilon[\zeta_n]|_{H^1 \to \R} = 0.
\]
\end{proposition}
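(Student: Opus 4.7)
The plan is to apply Ekeland's variational principle on the natural constraint set $N_\varepsilon$ and then convert the resulting Palais--Smale condition for $\mathcal{T}_\varepsilon|_{N_\varepsilon}$ into the analogous statement for the unrestricted functional $\mathcal{T}_\varepsilon$ via a Lagrange multiplier computation. Corollary~\ref{cor:limsup < M-1} ensures that any minimising sequence eventually lies in the interior $B_{M-1}(0)$, so that the ambient ball constraint does not obstruct free variation.

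Setting $G(\zeta) := \diff \mathcal{T}_\varepsilon[\zeta](\zeta)$, one finds for $\zeta \in N_\varepsilon$ that
\begin{equation*}
\diff G[\zeta](\zeta) = \diff^2 \mathcal{T}_\varepsilon[\zeta](\zeta,\zeta) + \diff \mathcal{T}_\varepsilon[\zeta](\zeta) = \diff^2 \mathcal{T}_\varepsilon[\zeta](\zeta,\zeta).
\end{equation*}
Computing $\tfrac{\diff^2}{\diff \lambda^2}\mathcal{T}_\varepsilon(\lambda\zeta)\big|_{\lambda=1}$ directly from \eqref{eq:final red func} and substituting the identity $2\mathcal{Q}(\zeta) = 4\mathcal{S}(\zeta) - \diff \mathcal{E}_\varepsilon[\zeta](\zeta)$ valid on $N_\varepsilon$ (which follows from \eqref{eq:dT}) yields
\begin{equation*}
\diff G[\zeta](\zeta) = -8\mathcal{S}(\zeta) + \bigO\big(\varepsilon^{1/2}|\zeta|_{H^1}^2\big).
\end{equation*}
By Remark~\ref{rem:inf is positive} and \eqref{eq:only S}, $\mathcal{S}(\zeta) \gtrsim 1$ on $N_\varepsilon$ for all sufficiently small $\varepsilon$, so $|\diff G[\zeta](\zeta)|$ is bounded below uniformly on $N_\varepsilon \cap B_{M-1}(0)$; hence $N_\varepsilon$ is locally a $C^1$ submanifold of $H_\varepsilon^1(\R^2)$ of codimension one by the implicit function theorem.

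Ekeland's principle applied on this manifold then produces a minimising sequence $\{\zeta_n\} \subset N_\varepsilon \cap B_{M-1}(0)$ together with Lagrange multipliers $\mu_n \in \R$ such that
\begin{equation*}
\diff \mathcal{T}_\varepsilon[\zeta_n] - \mu_n \diff G[\zeta_n] \longrightarrow 0 \quad \text{in } (H_\varepsilon^1)^\ast.
\end{equation*}
Testing this relation against $\zeta_n$, using $\diff \mathcal{T}_\varepsilon[\zeta_n](\zeta_n) = 0$ (from $\zeta_n \in N_\varepsilon$), the uniform lower bound on $|\diff G[\zeta_n](\zeta_n)|$ just established, and the uniform upper bound on $|\zeta_n|_{H^1}$ from Corollary~\ref{cor:limsup < M-1}, one finds $\mu_n \to 0$. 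Since $\{\diff G[\zeta_n]\}$ is bounded in $(H_\varepsilon^1)^\ast$ (by the smoothness of $\mathcal{T}_\varepsilon$ on bounded sets), the conclusion $|\diff \mathcal{T}_\varepsilon[\zeta_n]|_{H^1 \to \R} \to 0$ follows.

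The step I expect to require most care is the clean application of Ekeland's principle on the partial $C^1$ manifold $N_\varepsilon$ while respecting the ambient constraint $\zeta \in B_M(0)$. This can be circumvented either by a scale-invariant reparametrisation that transfers the problem to the unit sphere of $H_\varepsilon^1(\R^2)$ through the retraction $\zeta \mapsto \lambda(\zeta)\zeta$ supplied by Proposition~\ref{prop:nc props}(i), or, more directly, by applying Ekeland on the complete metric subspace $N_\varepsilon \cap \overline{B_{M-1}(0)}$, whose completeness follows from the closedness of $N_\varepsilon$ in $H_\varepsilon^1(\R^2)$ together with Corollary~\ref{cor:limsup < M-1}.
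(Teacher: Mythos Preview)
Your argument is correct and follows essentially the same route as the paper's proof: apply Ekeland's variational principle on the constraint set $N_\varepsilon$, obtain Lagrange multipliers $\mu_n$, test against $\zeta_n$, and use a uniform lower bound on $|\diff G[\zeta_n](\zeta_n)|$ to force $\mu_n\to 0$. The only cosmetic difference is that you express $\diff G[\zeta](\zeta)$ as $-8\mathcal{S}(\zeta)+\bigO(\varepsilon^{1/2})$ whereas the paper writes it as $-4\mathcal{Q}(\zeta)+\bigO(\varepsilon^{1/2})$; on $N_\varepsilon$ these are equivalent via \eqref{eq:dT}, and either form yields the required bound $|\diff G[\zeta_n](\zeta_n)|\gtrsim 1$ by Proposition~\ref{prop:lower bounds} or Remark~\ref{rem:inf is positive}.
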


\begin{proof}
Ekeland's variational principle for optimisation problems with regular constraints
\cite[Thm 3.1]{Ekeland74} implies the existence of
a minimising sequence $\{\zeta_n\}$ for ${\mathcal T}_\varepsilon|_{N_\varepsilon}$ and a sequence
$\{\mu_n\}$ of real numbers such that
\[
\lim_{n \to \infty} |\diff {\mathcal T}_\varepsilon[\zeta_n] - \mu_n \, \diff {\mathcal G}_\varepsilon[\zeta_n]|_{H^1 \to \R}  = 0,
\]
where ${\mathcal G}_\varepsilon = \diff {\mathcal T}_\varepsilon[\zeta_n](\zeta_n)$.
Applying this sequence of operators to $\zeta_n$, we find that $\mu_n \to 0$ as $n \to \infty$
(since $\diff {\mathcal T}_\varepsilon[\zeta_n](\zeta_n)=0$ and
\[
\diff{\mathcal G}_\varepsilon[\zeta_n](\zeta_n)
\!=\! \diff^2{\mathcal T}_\varepsilon[\zeta_n](\zeta_n,\zeta_n)
\!=\! -4{\mathcal Q}(\zeta_n) -4 \diff \mathcal{E}_\varepsilon[\zeta_n](\zeta_n) + \diff^2 \mathcal{E}_\varepsilon[\zeta_n](\zeta_n,\zeta_n)
\!\lesssim\! -1 ),
\]
whence $|\diff {\mathcal T}|_\varepsilon[\zeta_n]_{\tilde Y_\varepsilon \to \R} \to 0$ as $n \to \infty$.
\end{proof}

We proceed by using the local spaces $L^2(Q_j)$ and $H^1(Q_j)$,where
\[
Q_j = \{(x,y) \in \R^2: |x-j_1| < \tfrac{1}{2}, |y-j_2| < \tfrac{1}{2}\}
\] 
is the unit cube centered at the point $j = (j_1,j_2) \in \Z^2$,
to examine the convergence properties of general {Palais--Smale} sequences.

\begin{lemma}\label{lemma:critical points} \hspace{2cm}
\begin{itemize}
\item[(i)]
Suppose that $\{\zeta_n\} \subset B_{M-1}(0)$ satisfies
\[\lim_{n \to \infty} \diff {\mathcal T}_\varepsilon[\zeta_n]=0,
\qquad
\sup_{j \in \Z^2} |\zeta_n|_{L^2(Q_j)} \gtrsim 1.\]
There exists $\{w_n\} \subset \Z^2$ with the property that a subsequence of
$\{\zeta_n(\cdot+w_n)\}$
converges weakly in $H^1_\varepsilon(\R^2)$ to a nontrivial critical point $\zeta_\infty$ of
${\mathcal T}_\varepsilon$.\\[-6pt]

\item[(ii)]
Suppose that $\varepsilon>0$. The corresponding sequence  
\[
\qquad  u_n = u_1(\zeta_n) + u_2(u_1(\zeta_n)), \qquad  \zeta_n = \zeta_n(\cdot+w_n),
\] 
converges weakly in $X$  to a nontrivial critical point 
\[
u_\infty = u_1(\zeta_\infty) + u_2(u_1(\zeta_\infty))
\] 
of ${\mathcal I}_\varepsilon$.
\end{itemize}
\end{lemma}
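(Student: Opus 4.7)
The plan is to translate the sequence so that its mass concentrates, extract a weakly convergent subsequence, and then use weak continuity of $\diff \mathcal{T}_\varepsilon$ to show the limit is a nontrivial critical point. Concretely, I pick $w_n \in \Z^2$ with
\[
|\zeta_n|_{L^2(Q_{w_n})} \geq \tfrac{1}{2}\sup_{j \in \Z^2} |\zeta_n|_{L^2(Q_j)} \gtrsim 1,
\]
and set $\tilde \zeta_n := \zeta_n(\cdot + w_n)$. Translation by the integer vector $w_n$ multiplies $\hat\zeta_n$ by the phase $\ee^{\ii k \cdot w_n}$, so the Fourier support remains in $B_{\delta/\varepsilon}(0)$ and $\tilde\zeta_n \in H_\varepsilon^1(\R^2)$ with $|\tilde\zeta_n|_{H^1} \leq M-1$. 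A diagonal extraction produces a subsequence (not relabelled) converging weakly in $H^1(\R^2)$ to some $\zeta_\infty$. Weak $L^2$-limits preserve the Fourier support, so $\zeta_\infty \in H_\varepsilon^1(\R^2) \cap B_{M-1}(0)$. Rellich--Kondrachov gives strong convergence $\tilde\zeta_n \to \zeta_\infty$ in $L^2(Q_0)$, whence $|\zeta_\infty|_{L^2(Q_0)} \gtrsim 1$ and in particular $\zeta_\infty \neq 0$.

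To promote $\zeta_\infty$ to a critical point I would use that $\mathcal{T}_\varepsilon$ is translation invariant in $\zeta$: the quadratic form $\mathcal{Q}$, the local quartic $\int |\zeta|^4$, the nonlocal quartic $\langle L(\Diff) |\zeta|^2, |\zeta|^2 \rangle$, and the perturbation $\mathcal{E}_\varepsilon$ (whose pieces in Lemmata \ref{lemm:itilde_decomp}--\ref{K-lemma} are assembled from Fourier multipliers and local products, and hence commute with spatial translations of $\zeta$) are all invariant. Consequently $\diff \mathcal{T}_\varepsilon[\tilde\zeta_n] \to 0$ in $(H_\varepsilon^1)^*$. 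It remains to verify the weak sequential continuity
\[
\diff \mathcal{T}_\varepsilon[\tilde\zeta_n](v) \to \diff \mathcal{T}_\varepsilon[\zeta_\infty](v)
\]
for every $v \in H_\varepsilon^1(\R^2)$. The quadratic piece $\diff \mathcal{Q}$ is weakly continuous by definition. For the cubic differentials of the quartic parts, the 2D Sobolev embedding $H^1 \hookrightarrow L^4$ controls $|\tilde\zeta_n|^3$ uniformly in $L^{4/3}(\R^2)$, while Rellich--Kondrachov gives $\tilde\zeta_n \to \zeta_\infty$ in $L^4_{\mathrm{loc}}(\R^2)$; testing first against compactly supported $v$ and then approximating an arbitrary $v \in H_\varepsilon^1(\R^2)$ in the $L^4$-topology delivers convergence for both the local $|\zeta|^4$-term and --- thanks to the $L^2$-boundedness of $L(\Diff)$ --- the nonlocal $\langle L(\Diff)|\zeta|^2, |\zeta|^2 \rangle$-term. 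The differential $\diff \mathcal{E}_\varepsilon$ is structurally of the same type, being a sum of inner products of products of $\zeta$ with Fourier-multiplier images of such products, and is handled by the same combination of multiplier boundedness and local $L^p$-compactness. Combining these with $\diff \mathcal{T}_\varepsilon[\tilde\zeta_n] \to 0$ yields $\diff \mathcal{T}_\varepsilon[\zeta_\infty] = 0$.

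\textbf{Part (ii).} Here the work is already done. Since $\varepsilon \in (0,\varepsilon_\star)$ and $\zeta_\infty \in B_M(0)$ is a nontrivial critical point of $\mathcal{T}_\varepsilon$, Lemma \ref{prop:weak trace back}(i) produces a critical point $u_\infty = u_1(\zeta_\infty) + u_2(u_1(\zeta_\infty))$ of $\mathcal{I}_\varepsilon$ in $U_1$; nontriviality of $u_\infty$ follows from that of $u_1(\zeta_\infty)$ because the mapping $\zeta \mapsto u_1$ is a composition of isomorphisms. The weak convergence $\tilde\zeta_n \rightharpoonup \zeta_\infty$ in $H_\varepsilon^1(\R^2)$ then upgrades to weak convergence $u(\tilde\zeta_n) \rightharpoonup u_\infty$ in $X$ by Lemma \ref{prop:weak trace back}(iii).

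\textbf{Expected main obstacle.} The crux of the argument is the weak sequential continuity of $\diff \mathcal{T}_\varepsilon$ in Part (i), in particular of $\diff \mathcal{E}_\varepsilon$, whose many constituent terms interleave Fourier multipliers and pointwise products. One has to verify that strong local $L^p$-convergence of $\tilde\zeta_n$ propagates through each of these operations and, via density in $L^4(\R^2)$, upgrades to testing against arbitrary $v \in H_\varepsilon^1(\R^2)$ rather than only against compactly supported functions.
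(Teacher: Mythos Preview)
Your argument for the case $\varepsilon=0$ matches the paper's: translate, extract a weak limit, use Rellich--Kondrachov for nontriviality, and pass to the limit in $\diff\mathcal Q$ and $\diff\mathcal S$. The paper handles the nonlocal quartic by first noting that $\{|\zeta_n|^2\}$ converges weakly in $L^2(\R^2)$, which streamlines the limit $\langle L(\Diff)|\zeta_n|^2,\zeta_n\bar w\rangle \to \langle L(\Diff)|\zeta_\infty|^2,\zeta_\infty\bar w\rangle$, but your density-plus-local-compactness route achieves the same end.

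For $\varepsilon>0$, however, there is a genuine gap. You assert that $\diff\mathcal E_\varepsilon$ is ``structurally of the same type, being a sum of inner products of products of $\zeta$ with Fourier-multiplier images of such products'', and hence weakly sequentially continuous. This is not accurate: tracing back through Lemmata~\ref{lemm:itilde_decomp}--\ref{K-lemma}, the remainder $\mathcal E_\varepsilon$ contains the piece $\mathcal R_\varepsilon(u_1)$, which involves $u_\mathrm{c}(u_1)$ --- the correction defined only implicitly as the fixed point of $G(u_1,\cdot)$. Its derivative $\diff u_\mathrm{c}[u_1]$ is obtained by inverting $I-\diff_2 G$, and weak sequential continuity of such an object (and of the resulting composite expressions in $\diff\mathcal R_\varepsilon$) does not follow from the multiplier-plus-local-product reasoning you sketch. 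The paper explicitly flags this: ``We cannot use the above argument for $\varepsilon>0$ since we have not established that $\diff\mathcal E_\varepsilon[\zeta_n](w)\to\diff\mathcal E_\varepsilon[\zeta_\infty](w)$.''

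The paper's remedy --- and the fix for your proof --- is to reverse the order of (i) and (ii) when $\varepsilon>0$. Use Lemma~\ref{prop:weak trace back}(ii) and (iii) to transfer the Palais--Smale property and weak convergence to the sequence $\{u_n\}\subset X$, then exploit that the FDKP Euler--Lagrange map $u\mapsto \varepsilon^2 u+n(\Diff)u+u^2$ is weakly continuous $X\to Z$ (Corollary~\ref{EL mapping}); this gives $\diff\mathcal I_\varepsilon[u_\infty]=0$ directly, with no remainder term to worry about. That $\zeta_\infty$ is a critical point of $\mathcal T_\varepsilon$ then follows from the correspondence in Lemma~\ref{prop:weak trace back}(i). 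In short: for $\varepsilon>0$, part~(ii) is not a corollary of part~(i) but the engine that drives it.
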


\begin{proof}
We can select $\{w_n\} \subset \Z^2$ so that
\[
\liminf_{n \to \infty} |\zeta_n(\cdot+w_n)|_{L^2({Q_0})} { \gtrsim 1}.
\]
The sequence $\{\zeta_n(\cdot+w_n)\} \subset B_{M-1}(0)$
admits a subsequence which converges weakly in $H^1_\varepsilon(\R^2)$,
strongly in $L^2({Q_0})$ and pointwise almost everywhere to $\zeta_\infty \in B_M(0)$;
it follows that $|\zeta_\infty|_{L^2({Q_0})} > 0$.
We henceforth abbreviate $\{\zeta_n(\cdot+w_n)\}$ to $\{\zeta_n\}$ and extract further subsequences
as necessary.

Observe that $\{|\zeta_n|^2\}$ converges weakly in
$L^2(\R^2)$ and pointwise almost
everywhere to $|\zeta_\infty|^2$ (it is bounded in $L^2(\R^2)$ since $\{\zeta_n\}$ is bounded
in $H^1(\R^2)$ and hence in $L^4(\R^2)$). The weak convergence of \(\zeta_n\) and \(|\zeta_n|^2\) in \(L^2(\R^2)\) thus yields
\begin{align*}
& \limsup_{n \to \infty} \Big| \big\langle L(\Diff) |\zeta_n|^2, \zeta_n \overline w  \big\rangle - \big\langle L(\Diff) |\zeta_\infty|^2, \zeta_\infty \overline{w} \big\rangle \Big| \\
&\quad =  \limsup_{n \to \infty} \Big| \big\langle  |\zeta_n|^2-|\zeta_\infty|^2, L(\Diff)\zeta_\infty \overline{w} \big\rangle +   \big\langle L(\Diff) |\zeta_n|^2, (\zeta_n-\zeta_\infty) \overline w \big\rangle\Big|\\
&\quad\leq   \limsup_{n \to \infty} \Big|\big\langle L(\Diff) |\zeta_n|^2, (\zeta_n-\zeta_\infty) \overline w \big\rangle \Big| \\
& \quad \lesssim    \limsup_{n \to \infty}\big||\zeta_n|^2\big|_{L^2} \lim_{n \to \infty} \big| (\zeta_n-\zeta_\infty) \overline w \big|_{L^2} \\
& \quad= 0
\end{align*}
for each $w \in H^1(\R^2)$,
where in the final calculation we have written\linebreak
\(
|\zeta_n - \zeta_\infty|^2  = |\zeta_n|^2 - 2 \re \zeta_n \overline{\zeta_\infty} + |\zeta_\infty|^2
\).
It follows that $\diff {\mathcal S}[\zeta_n](w) \rightarrow \diff {\mathcal S}[\zeta_\infty](w)$
(because $\diff {\mathcal S}[\zeta](w)=
4 \re \langle L(\Diff)|\zeta|^2,\zeta w \rangle$) and furthermore
$\diff {\mathcal Q}[\zeta_n](w) \rightarrow
\diff {\mathcal Q}[\zeta_\infty](w)$.
In the case $\varepsilon=0$ we conclude that
$\diff {\mathcal T}_0[\zeta_n](w) \rightarrow
\diff {\mathcal T}_0[\zeta_\infty](w)$
as $n \rightarrow \infty$ for all $w \in H^1(\R^2)$, so that
$\diff {\mathcal T}_0[\zeta_\infty]=0$ by uniqueness of limits.

We cannot use
the above argument for $\varepsilon>0$ since we have not established that
$\diff {\mathcal E}_\varepsilon[\zeta_n](w) \rightarrow \diff {\mathcal E}_\varepsilon[\zeta_\infty](w)$
as $n \rightarrow \infty$, and we proceed by considering the FKDP functional
${\mathcal I}_\varepsilon$ (which has no remainder term). 
According to Lemma \ref{prop:weak trace back}(iii)
the sequence $\{u_n\}$ converges weakly in $X$ to $u_\infty$, and Lemma \ref{prop:weak trace back}(ii) shows that
\[
\lim_{n \to \infty}|\diff {\mathcal I}_\varepsilon[u_n]|_{X \to \R} =0.
\]
Since $u \mapsto \varepsilon u + n(\Diff)u + u^2$ is in particular weakly continuous
$X \to L^2(\R^2)$ (see Corollary \ref{EL mapping}), one finds that
\begin{eqnarray*}
\diff {\mathcal I}_\varepsilon[u_\infty](w) &= & \int_{\R^2} \left( \varepsilon^2 u_\infty + n(\Diff) u_\infty + u_\infty^2 \right) w \dx \dy\nonumber \\
&=&  \lim_{n \to \infty} \int_{\R^2} \left( \varepsilon^2 u_n + n(\Diff) u_n + u_n^2 \right) w \dx \dy \nonumber \\
& = & \lim_{n \to \infty}\diff {\mathcal I}_\varepsilon[u_n](w) \nonumber \\
& = & 0
\end{eqnarray*}
for any $w \in X$, whence $u_\infty$ is a critical point of ${\mathcal I}_\varepsilon$.
\end{proof}

Although we know from Proposition~\ref{prop:lower bounds} that the natural constraint set is bounded from below in \(H^1(\R^2)\), it remains to show that this bound 
implies that the minimising sequence
for ${\mathcal T}_\varepsilon$ over $N_\varepsilon$ identified in Proposition
\ref{prop:minimising sequence} satisfies the `non-vanishing' criterion in Lemma
\ref{lemma:critical points}. This task is accomplished in Proposition \ref{prop:1/2}
and Corollary \ref{cor:no vanishing} (with Remark~\ref{rem:inf is positive}).

\begin{proposition} \label{prop:1/2}
The inequality
\[
\left\langle |\rho_1|^2-|\rho_2|^2, L(\Diff)|\xi|^2  \right\rangle
\lesssim \sup_{j \in \Z} |\rho_1-\rho_2|_{L^2(Q_j)}^\frac{1}{2}(|\rho_1|_{H^1}+|\rho_2|_{H^1})^\frac{3}{2} |\xi|_{H^1}^2
\]
holds for all $\rho_1$, $\rho_2$, $\xi \in H^1(\R^2)$. 
\end{proposition}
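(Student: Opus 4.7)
The plan is to turn the inner product into a pointwise integral estimate, slice the plane into the unit cubes $Q_j$, and interlace local Gagliardo--Nirenberg with a discrete H\"older inequality so that precisely one factor of the sum carries the cube-localised $L^2$-norm.

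First I would use the algebraic identity
\[
|\rho_1|^2 - |\rho_2|^2 = (\rho_1-\rho_2)\overline{\rho_1} + \rho_2\,\overline{(\rho_1-\rho_2)},
\]
which gives the pointwise bound $\bigl||\rho_1|^2-|\rho_2|^2\bigr|\leq |\rho_1-\rho_2|\bigl(|\rho_1|+|\rho_2|\bigr)$. Setting $F := L(\Diff)|\xi|^2$, the fact that $L(\Diff)$ is a bounded operator on $L^2(\R^2)$ together with the Sobolev embedding $H^1(\R^2)\hookrightarrow L^4(\R^2)$ yields
\[
|F|_{L^2} \lesssim \bigl||\xi|^2\bigr|_{L^2} = |\xi|_{L^4}^2 \lesssim |\xi|_{H^1}^2,
\]
so the quartic factor in $\xi$ is already accounted for once we can pair $F$ against the product $|\rho_1-\rho_2|\bigl(|\rho_1|+|\rho_2|\bigr)$ in $L^2$.

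Next I would decompose $\R^2 = \bigcup_{j\in\Z^2} Q_j$ and apply H\"older with exponents $(4,4,2)$ on each cube, obtaining
\[
\Bigl|\bigl\langle |\rho_1|^2-|\rho_2|^2,\, F\bigr\rangle\Bigr|
\leq
\sum_{j\in\Z^2} |\rho_1-\rho_2|_{L^4(Q_j)} \bigl||\rho_1|+|\rho_2|\bigr|_{L^4(Q_j)} |F|_{L^2(Q_j)}.
\]
The key interpolation is the local two-dimensional Gagliardo--Nirenberg inequality
\[
|f|_{L^4(Q_j)} \lesssim |f|_{L^2(Q_j)}^{1/2} |f|_{H^1(Q_j)}^{1/2},
\]
with constants independent of $j$ since the $Q_j$ are translates of the same unit cube. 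Applying this to $\rho_1-\rho_2$, I would pull out one factor of $|\rho_1-\rho_2|_{L^2(Q_j)}^{1/2}$ and bound it by $\sup_j |\rho_1-\rho_2|_{L^2(Q_j)}^{1/2}$, leaving
\[
\sup_j |\rho_1-\rho_2|_{L^2(Q_j)}^{1/2}\sum_j |\rho_1-\rho_2|_{H^1(Q_j)}^{1/2}\bigl||\rho_1|+|\rho_2|\bigr|_{L^4(Q_j)}|F|_{L^2(Q_j)}.
\]

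Finally I would run a discrete H\"older $(4,4,2)$ on the remaining sum. The $L^4$-sum produces $\bigl(\sum_j (|\rho_1-\rho_2|_{H^1(Q_j)}^{1/2})^4\bigr)^{1/4} = |\rho_1-\rho_2|_{H^1(\R^2)}^{1/2}$ (up to the constant finite overlap of the $Q_j$), the second $L^4$-sum gives $\bigl||\rho_1|+|\rho_2|\bigr|_{L^4(\R^2)} \lesssim |\rho_1|_{H^1}+|\rho_2|_{H^1}$ via the global Sobolev embedding, and the $L^2$-sum recovers $|F|_{L^2(\R^2)} \lesssim |\xi|_{H^1}^2$. Using $|\rho_1-\rho_2|_{H^1}^{1/2}\leq (|\rho_1|_{H^1}+|\rho_2|_{H^1})^{1/2}$ assembles the factor $(|\rho_1|_{H^1}+|\rho_2|_{H^1})^{3/2}$, delivering the claimed bound. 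The only point requiring care is ensuring that the constants in the local Gagliardo--Nirenberg inequality and in the transition from cube-by-cube $H^1$ norms to the global $H^1(\R^2)$ norm are uniform in $j$; this is standard given the translation-invariant choice of the $Q_j$.
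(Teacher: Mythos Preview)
Your argument is correct, but it organises the estimate differently from the paper. The paper first applies Cauchy--Schwarz globally to separate the $\xi$-factor, reducing to a bound on $\bigl||\rho_1|^2-|\rho_2|^2\bigr|_{L^2}^2$; it then localises this quantity, writes it as $\sum_j \langle |\rho_1-\rho_2|,\,|\rho_1-\rho_2||\rho_1+\rho_2|^2\rangle_{L^2(Q_j)}$, bounds the second factor crudely by $(|\rho_1|+|\rho_2|)^3$, and uses only the local Sobolev embedding $H^1(Q_j)\hookrightarrow L^6(Q_j)$ together with a $\sup\times\ell^1$ estimate on the resulting sum. You instead localise the full trilinear integral from the start, use local H\"older with exponents $(4,4,2)$, invoke the local Ladyzhenskaya/Gagliardo--Nirenberg interpolation $|f|_{L^4(Q_j)}\lesssim |f|_{L^2(Q_j)}^{1/2}|f|_{H^1(Q_j)}^{1/2}$ to generate the half-power of the cube-localised $L^2$ norm, and finish with a discrete H\"older $(4,4,2)$. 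Both routes yield the same bound; the paper's version trades the interpolation step for a higher Sobolev exponent and a simpler discrete estimate, whereas yours is a bit more symmetric and keeps all three factors in play until the final discrete H\"older. Your remark about uniform constants over the $Q_j$ is the only thing to check, and it is immediate since the cubes are translates of $Q_0$ (in the paper's definition they actually tile $\R^2$ disjointly, so no overlap constant is needed).
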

 
\begin{proof}
First note that
\begin{equation}\label{eq:xi-estimate}
\left| L(\Diff) |\xi|^2 \right|_{L^2} \eqsim \left| |\xi|^2 \right|_{L^2} = \left| \xi \right|_{L^4}^2 \lesssim \left| \xi \right|_{H^1}^2.
\end{equation}
Using the embedding \(H^1(\R^2) \hookrightarrow L^6(\R^2)\), we furthermore find that
\begin{align}
\left| |\rho_1|^2-|\rho_2|^2\right|_{L^2}^2 & = \sum_{j\in \Z^2}\left|\mathrm{Re}\left( (\rho_1-\rho_2)
\overline{(\rho_1+\rho_2)}\right)\right|_{L^2(Q_j)}^2 \nonumber \\
& \leq \sum_{j\in \Z^2}\Big\langle |\rho_1-\rho_2|,
|\rho_1-\rho_2| |\rho_1+\rho_2|^2\Big\rangle_{L^2(Q_j)} \nonumber \\
& \leq \sum_{j\in \Z^2}
\left| \rho_1-\rho_2\right|_{L^2(Q_j)}
\big| |\rho_1|+|\rho_2| \big|_{L^6(Q_j)}^3\nonumber \\
& \leq \sup_{j \in \Z^2}  \left| \rho_1-\rho_2\right|_{L^2(Q_j)}
\sum_{j\in \Z^2} \big| |\rho_1|+|\rho_2| \big|_{H^1(Q_j)}^3\nonumber \\
& \leq \sup_{j \in \Z^2}  \left| \rho_1-\rho_2\right|_{L^2(Q_j)}
\big| |\rho_1|+|\rho_2| \big|_{H^1(\R^2)} \sum_{j\in \Z^2} \big| |\rho_1|+|\rho_2| \big|_{H^1(Q_j)}^2\nonumber \\
& = \sup_{j \in \Z^2}  \left| \rho_1-\rho_2\right|_{L^2(Q_j)}
\big| |\rho_1|+|\rho_2| \big|_{H^1(\R^2)}^3. \label{eq:rho-estimate}
\end{align}
Combining the Cauchy-Schwarz inequality with \eqref{eq:xi-estimate} and \eqref{eq:rho-estimate} yields the  proposition.
 \end{proof}

\begin{corollary} \label{cor:no vanishing}
Any sequence $\{\zeta_n\} \subset  B_M(0)$ such that ${\mathcal S}(\zeta_n) \gtrsim 1$
satisfies
\[
\sup_{j \in \Z^2}  |\zeta_n|_{L^2(Q_j)} \gtrsim 1.
\]
\end{corollary}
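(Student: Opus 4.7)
The plan is to apply Proposition~\ref{prop:1/2} directly with the choice $\rho_1 = \zeta_n$, $\rho_2 = 0$, and $\xi = \zeta_n$, which produces exactly the quantity $\mathcal{S}(\zeta_n) = \langle L(\Diff)|\zeta_n|^2, |\zeta_n|^2\rangle$ on the left-hand side.

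Concretely, first I would recall that by definition
\[
\mathcal{S}(\zeta_n) = \bigl| \sqrt{L(\Diff)}\, |\zeta_n|^2\bigr|_{L^2}^2 = \bigl\langle |\zeta_n|^2,\, L(\Diff)|\zeta_n|^2 \bigr\rangle,
\]
so the inequality from Proposition~\ref{prop:1/2} (with the indicated substitution) yields
\[
\mathcal{S}(\zeta_n) \;\lesssim\; \sup_{j \in \Z^2} |\zeta_n|_{L^2(Q_j)}^{\frac{1}{2}} \, |\zeta_n|_{H^1}^{\frac{3}{2}} \, |\zeta_n|_{H^1}^{2} \;=\; \sup_{j \in \Z^2} |\zeta_n|_{L^2(Q_j)}^{\frac{1}{2}} \, |\zeta_n|_{H^1}^{\frac{7}{2}}.
\]

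Second, I would use the a priori boundedness from the hypothesis $\{\zeta_n\} \subset B_M(0)$, which gives $|\zeta_n|_{H^1} \leq M$. Combined with the standing assumption $\mathcal{S}(\zeta_n) \gtrsim 1$, the displayed estimate rearranges to
\[
\sup_{j \in \Z^2} |\zeta_n|_{L^2(Q_j)}^{\frac{1}{2}} \;\gtrsim\; \frac{\mathcal{S}(\zeta_n)}{M^{7/2}} \;\gtrsim\; 1,
\]
and squaring yields the desired conclusion $\sup_{j \in \Z^2} |\zeta_n|_{L^2(Q_j)} \gtrsim 1$.

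There is no real obstacle here: the whole point of Proposition~\ref{prop:1/2} is to encode exactly this kind of local-to-global control for the nonlocal quartic form $\mathcal{S}$, and the boundedness of $\{\zeta_n\}$ in $H^1(\R^2)$ absorbs the polynomial factors. The only sanity check to make is that $L(\Diff)$ defines an equivalent $L^2$-norm on $|\zeta|^2$ (recorded in the paragraph following \eqref{eq:final red func}), so that $\mathcal{S}(\zeta_n) \gtrsim 1$ is indeed the natural nondegeneracy hypothesis matching the application in the proof of the Palais--Smale-type statement from Lemma~\ref{lemma:critical points}.
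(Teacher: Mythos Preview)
Your proof is correct and matches the paper's argument exactly: apply Proposition~\ref{prop:1/2} with $\rho_1=\xi=\zeta_n$ and $\rho_2=0$, then absorb the factor $|\zeta_n|_{H^1}^{7/2}$ using the bound $|\zeta_n|_{H^1}\leq M$.
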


\begin{proof}
Using Proposition \ref{prop:1/2} with \(\xi = \varrho_1 = \zeta_n\) and \(\varrho_2 = 0\), one finds that
\[
 {\mathcal S}(\zeta_n)= \big| \sqrt{L(\Diff)}|\zeta_n|^2 \big|_{L^2}^2 \lesssim
\sup_{j \in \Z^2} |\zeta_n|_{L^2(Q_j)}^\frac{1}{2} |\zeta_n|_{H^1}^{\frac{7}{2}} \lesssim
\sup_{j \in \Z^2} |\zeta_n|_{L^2(Q_j)}^\frac{1}{2}.\qedhere
\]
\end{proof}

Altogether we have established the following existence result for the DS and FDKP functionals.

\begin{theorem}\label{thm:first existence theorem}
There exists a minimising sequence  \(\{\zeta_n\} \subset H_\varepsilon^1(\R^2)\) for $\mathcal{T}_\varepsilon\vert_{N_\varepsilon}$ with the properties that
\begin{itemize}
\item[(i)] \(\{\zeta_n\}\) converges weakly in \(H_\varepsilon^1(\R^2)\)  to a critical point \(\zeta_\infty\) of the DS functional \(\mathcal{T}_\varepsilon\) for $\varepsilon \geq 0$,
\item[(ii)] the corresponding sequence \(\{u(\zeta_n)\}\)
converges weakly in $X$ to a critical point $u_\infty = u(\zeta_\infty)$ of the FDKP functional \(I_\varepsilon\) for $\varepsilon>0$.
\end{itemize}
\end{theorem}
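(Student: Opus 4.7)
The plan is to assemble the tools developed in this section: Proposition~\ref{prop:minimising sequence} produces a Palais--Smale minimising sequence, Corollary~\ref{cor:limsup < M-1} confines it to $B_{M-1}(0)$, and Lemma~\ref{lemma:critical points} converts such a sequence (after translation) into a nontrivial critical point, provided the non-vanishing hypothesis $\sup_{j \in \Z^2} |\zeta_n|_{L^2(Q_j)} \gtrsim 1$ can be verified. All of the hard analysis has already been done; the proof is essentially a bookkeeping exercise.

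First I would invoke Proposition~\ref{prop:minimising sequence} to extract a minimising sequence $\{\zeta_n\} \subset N_\varepsilon$ for $\mathcal{T}_\varepsilon|_{N_\varepsilon}$ satisfying $\diff\mathcal{T}_\varepsilon[\zeta_n] \to 0$ in $H^{-1}$; Corollary~\ref{cor:limsup < M-1} then ensures $\{\zeta_n\} \subset B_{M-1}(0)$ for $n$ large.

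The only step requiring genuine care is the non-vanishing assertion. Since $\zeta_n \in N_\varepsilon$, identity \eqref{eq:only S} rearranges to
\[
\mathcal{S}(\zeta_n) = \mathcal{T}_\varepsilon(\zeta_n) - \mathcal{E}_\varepsilon(\zeta_n) + \tfrac{1}{2}\,\diff\mathcal{E}_\varepsilon[\zeta_n](\zeta_n),
\]
and the uniform bound $|\zeta_n|_{H^1} \leq M$ combined with \eqref{Final error estimate} yields $|\mathcal{E}_\varepsilon(\zeta_n)| + |\diff\mathcal{E}_\varepsilon[\zeta_n](\zeta_n)| \lesssim \varepsilon^{1/2}$. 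Together with Remark~\ref{rem:inf is positive}, which gives $\tau_\varepsilon \gtrsim 1$ uniformly for small $\varepsilon \geq 0$, this delivers $\mathcal{S}(\zeta_n) \gtrsim 1$ after possibly shrinking $\varepsilon_\star$. Corollary~\ref{cor:no vanishing} then provides the required lower bound $\sup_{j \in \Z^2} |\zeta_n|_{L^2(Q_j)} \gtrsim 1$.

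Finally I would apply Lemma~\ref{lemma:critical points}(i) to produce translations $\{w_n\} \subset \Z^2$ and a subsequence of $\{\zeta_n(\cdot + w_n)\}$ which converges weakly in $H^1_\varepsilon(\R^2)$ to a nontrivial critical point $\zeta_\infty$ of $\mathcal{T}_\varepsilon$, valid for all $\varepsilon \in [0,\varepsilon_\star)$; this establishes (i). Part (ii) is then immediate from Lemma~\ref{lemma:critical points}(ii) (combined with Lemma~\ref{prop:weak trace back}): for $\varepsilon>0$, the corresponding sequence $u(\zeta_n) = u_1(\zeta_n)+u_2(u_1(\zeta_n))$ converges weakly in $X$ to $u_\infty = u(\zeta_\infty)$, which is a nontrivial critical point of $\mathcal{I}_\varepsilon$. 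The main potential pitfall is ensuring the non-vanishing estimate holds uniformly as $\varepsilon \to 0$; this is controlled by the $\bigO(\varepsilon^{1/2})$ size of the $\mathcal{E}_\varepsilon$-terms against the uniform positive lower bound on $\tau_\varepsilon$.
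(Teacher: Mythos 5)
Your proof is correct and follows exactly the route the paper intends: Proposition~\ref{prop:minimising sequence} for the Palais--Smale minimising sequence, Corollary~\ref{cor:limsup < M-1} for the $B_{M-1}(0)$ bound, identity~\eqref{eq:only S} together with Remark~\ref{rem:inf is positive} and Corollary~\ref{cor:no vanishing} for non-vanishing, and Lemma~\ref{lemma:critical points} (with Lemma~\ref{prop:weak trace back}) for both conclusions. The paper itself presents the theorem as an ``altogether'' summary of these same ingredients, so no substantive difference.
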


\section{Ground states} \label{sec:ground states}

In this section we strengthen Theorem~\ref{thm:first existence theorem} by showing that we
can choose the translational sequence $\{w_n\}$ appearing in Lemma~\ref{lemma:critical points}(i) to ensure strong convergence of (a subsequence of) the Palais-Smale sequence
$\{\zeta_n(\cdot+w_n)\}$
 in \(H^1_\varepsilon(\R^2)\) to a ground state. This observation will also provide us with some additional convergence results in the limit \(\varepsilon \to 0\). For these purposes we use an abstract concentration-compactness result by Ehrnstr\"{o}m \& Groves \cite[Thm 5.1]{EhrnstroemGroves18}, noting that any minimising sequence $\{\zeta_n\}$ of ${\mathcal T}_\varepsilon|_{N_\varepsilon}$
satisfies
$$\sup_{j \in \Z^2} |\zeta_n|_{L^2(Q_j)} \gtrsim 1$$
(because of Remark \ref{rem:inf is positive}, Corollary~\ref{cor:limsup < M-1}, Corollary~\ref{cor:no vanishing} and Lemma~\ref{lemma:general convergence}).

\begin{theorem}\label{thm:cc}
Let $H_1 \hookrightarrow H_0$ be Hilbert spaces, and
consider a sequence $\{x_n\} \subset l^2(\Z^s,H_1)$, where $s \in \N$.
Writing $x_n=(x_{n,j})_{j\in \Z^s}$, where  $x_{n,j}\in H_1$, suppose that\\[-10pt]
\begin{itemize}
\item[(i)] $\{x_n\}$ is bounded in $l^2(\Z^s,H_1)$,\\[-8pt]
\item[(ii)] $S=\{x_{n,j}:n \in \N, j \in \Z^s\}$ is relatively compact in $H_0$,\\[-8pt]
\item[(iii)] $\limsup_{n\to \infty}|x_n|_{l^\infty(\Z^s,H_0)} { \gtrsim 1}$.\\[-10pt]
\end{itemize}
For each $\Delta>0$ the sequence $\{x_n\}$ admits a subsequence
with the following properties.
{ There exist a finite number $m$ of non-zero vectors $x^1,\ldots,x^m\in l^2(\Z^s,H_1)$ and
sequences $\{w^1_n\}$, \ldots, $\{w^m_n\}
 \subset \Z^s$ satisfying
\[
\lim_{n \to \infty} |w_n^{m^{\prime\prime}}-w_n^{m^\prime}| \to \infty, \qquad 1 \leq m^{\prime\prime} < m^\prime \leq m
\]
such that
\begin{align*}
T_{-w^{m^\prime}_n}x_n &\rightharpoonup x^{m^\prime},\\
|x^{m^\prime}|_{l^\infty(\Z^s,H_0)} &=
\lim_{n\to \infty}
\left|x_n-\sum_{l=1}^{m^\prime-1}T_{w^l_n}x^l\right|_{l^\infty(\Z^s,H_0)}, \\
 \lim_{n\to \infty}|x_n|_{l^2(\Z^s,H_1)}^2 &=
\sum_{l=1}^{m^\prime} |x^l|_{l^2(\Z^s,H_1)}^2+
\lim_{n\to \infty}
\left|x_n-\sum_{l=1}^{m^\prime}T_{w^l_n}x^l\right|_{l^2(\Z^s,H_1)}^2
\end{align*}
for $m^\prime = 1, \ldots, m$,
\[
\limsup_{n\to \infty}\left|
x_n-\sum_{l=1}^mT_{w^l_n}x^l
\right|_{l^\infty(\Z^s,H_0)}\leq \Delta,
\]
and
\[
\lim_{n\to \infty}\left|x_n-T_{w^1_n}x^1
\right|_{l^\infty(\Z^s,H_0)}=0 
\]
if $m=1$. 
Here the weak convergence is understood in $l^2(\Z^s,H_1)$ and
$T_w$ denotes the translation operator $T_w (x_{n,j})=(x_{n,j-w})$.}
\end{theorem}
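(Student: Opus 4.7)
I would prove Theorem~\ref{thm:cc} by iterative extraction of concentration profiles in the spirit of P.-L.\ Lions, adapted to the discrete lattice setting. First, assumption~(iii) supplies a sequence $\{w_n^1\} \subset \Z^s$ for which $|x_{n, w_n^1}|_{H_0}$ is bounded below along a subsequence. By~(i) the translated sequence $T_{-w_n^1} x_n$ is bounded in $l^2(\Z^s,H_1)$, so passing to a further subsequence produces a weak limit $x^1 \in l^2(\Z^s,H_1)$. Coordinate-wise weak convergence in $H_1$ is upgraded to strong convergence in $H_0$ by the compactness hypothesis~(ii) applied to the zeroth coordinate, yielding $|x^1_0|_{H_0} \gtrsim 1$ and hence $x^1 \neq 0$.

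Next I establish the orthogonality identity. Writing $r_n^1 = x_n - T_{w_n^1} x^1$, the weak convergence $T_{-w_n^1} r_n^1 \rightharpoonup 0$ in $l^2(\Z^s,H_1)$ together with the isometry of $T_w$ gives, by a direct expansion of the squared norm,
\begin{equation*}
\lim_{n\to\infty} |x_n|_{l^2(\Z^s,H_1)}^2 = |x^1|_{l^2(\Z^s,H_1)}^2 + \lim_{n\to\infty} |r_n^1|_{l^2(\Z^s,H_1)}^2.
\end{equation*}
The same device also yields the corresponding $l^\infty(\Z^s,H_0)$ identity for the maximum extracted bump. I then iterate: if $\limsup_n |r_n^1|_{l^\infty(\Z^s,H_0)} > \Delta$, select $\{w_n^2\}$ and a weak limit $x^2$ from $T_{-w_n^2} r_n^1$. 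The divergence $|w_n^2 - w_n^1| \to \infty$ is forced by contradiction: if $w_n^2 - w_n^1$ were bounded, pass to a subsequence where it is constant; then $T_{-w_n^2} r_n^1 = T_{-(w_n^2-w_n^1)} T_{-w_n^1} r_n^1 \rightharpoonup 0$, contradicting $x^2 \neq 0$. Exactly the same argument, comparing $w_n^{m'+1}$ with each previously chosen $w_n^l$, delivers the pairwise divergence condition at every subsequent step.

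Termination rests on a quantitative lower bound for each extracted profile. At the $m'$-th step $|x^{m'}_0|_{H_0}$ is bounded below by a constant $c(\Delta) > 0$ (coming from the non-vanishing hypothesis for the current remainder and the strong $H_0$-convergence on the coordinate, via hypothesis~(ii)), hence $|x^{m'}|_{l^2(\Z^s,H_1)} \geq c(\Delta)$. The iterated orthogonality identity then forces
\begin{equation*}
\sum_{l=1}^{m'} |x^l|_{l^2(\Z^s,H_1)}^2 \leq \limsup_{n\to\infty} |x_n|_{l^2(\Z^s,H_1)}^2 < \infty,
\end{equation*}
so the procedure must stop after at most $m \lesssim c(\Delta)^{-2}$ steps, at which point the remainder's $l^\infty(\Z^s,H_0)$-norm is at most $\Delta$. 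A final diagonal argument arranges all subsequence extractions coherently.

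The main obstacles are (a) verifying the asymptotic orthogonality of translated profiles, which requires that compactness in $H_0$ together with weak convergence in $H_1$ eliminates all cross terms; and (b) the bookkeeping of nested subsequences so that the divergence $|w_n^{m''} - w_n^{m'}| \to \infty$ holds for \emph{every} pair and the weak limits survive each successive extraction. Both issues are resolved by systematically refining the subsequence before each new extraction and invoking hypothesis~(ii) on the countable union $S$ rather than on individual coordinates.
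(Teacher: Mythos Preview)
The paper does not prove Theorem~\ref{thm:cc}; it is quoted verbatim from Ehrnstr\"{o}m \& Groves \cite[Thm~5.1]{EhrnstroemGroves18} and used as a black box. Your proof plan is therefore not competing with any argument in the present paper.

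That said, your outline is the standard profile-decomposition argument and is essentially correct, with two points worth tightening. First, the contradiction for $|w_n^2-w_n^1|\to\infty$ reads as circular: you appeal to $x^2\neq 0$ to derive the divergence, but $x^2\neq 0$ itself relies on strong $H_0$-convergence of the zeroth coordinate of the remainder, which in turn uses the divergence. The cleaner route is to argue directly that if $w_n^2-w_n^1=c$ is constant along a subsequence, then $r^1_{n,w_n^2}=x_{n,w_n^2}-x^1_c$ with $x_{n,w_n^2}=(T_{-w_n^1}x_n)_c\rightharpoonup x^1_c$ in $H_1$ and, by hypothesis~(ii), strongly in $H_0$; hence $|r^1_{n,w_n^2}|_{H_0}\to 0$, contradicting the choice $|r^1_{n,w_n^2}|_{H_0}>\Delta$. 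Second, the sentence ``the same device also yields the corresponding $l^\infty(\Z^s,H_0)$ identity'' is too quick: one must choose $w_n^{m'}$ at an actual maximiser of $|r_n^{m'-1}|_{l^\infty(\Z^s,H_0)}$, then use hypothesis~(ii) together with the already-established divergences to show $r^{m'-1}_{n,w_n^{m'}+j}\to x^{m'}_j$ strongly in $H_0$ for every fixed $j$, which forces $|x^{m'}_j|_{H_0}\le |x^{m'}_0|_{H_0}$. Both fixes are routine, and with them your sketch would constitute a complete proof.
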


We proceed by using Theorem~\ref{thm:cc} to study { Palais--Smale} sequences for
${\mathcal T}_\varepsilon$, extracting subsequences where necessary for the
validity of our arguments.

\begin{lemma} \label{lemma:application of cc}
Suppose that $\{\zeta_n\} \subset B_{M-1}(0)$ satisfies
\[\lim_{n \to \infty} \norm{\diff {\mathcal T}_\varepsilon[\zeta_n]}_{\tilde{Y}\to \mathbb{R}}=0,
\qquad
\sup_{j \in \Z^2} |\zeta_n|_{L^2(Q_j)} \gtrsim 1.\]
There exists $\{w_n\} \subset \Z^2$ and a nontrivial critical point $\zeta_\infty$ of ${\mathcal T}_\varepsilon$
such that $\zeta_n(\cdot+w_n)
\rightharpoonup \zeta_\infty$ in $H^1(\R^2)$, ${\mathcal S}(\zeta_n) \to
{\mathcal S}(\zeta_\infty)$ as $n \to \infty$ and
\[
\lim_{n \to \infty} \sup_{j \in \Z^2} |\zeta_n(\cdot+w_n)-\zeta_\infty|_{L^2(Q_j)}=0.
\]
\end{lemma}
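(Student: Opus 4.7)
The strategy is to apply Theorem~\ref{thm:cc} to the cube-restrictions of $\{\zeta_n\}$. Setting $H_1 := H^1(Q_0)$, $H_0 := L^2(Q_0)$, $s := 2$, and $x_n := (\zeta_n(\cdot + j)|_{Q_0})_{j \in \Z^2}$, the boundedness $\{\zeta_n\} \subset B_{M-1}(0)$ gives $\{x_n\}$ bounded in $l^2(\Z^2, H_1)$, Rellich's theorem ensures $S = \{x_{n,j}\colon n \in \N, j \in \Z^2\}$ is relatively compact in $H_0$, and the non-vanishing hypothesis $\sup_j |\zeta_n|_{L^2(Q_j)} \gtrsim 1$ is precisely hypothesis (iii). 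For any $\Delta > 0$ the theorem then produces profiles $\zeta^1,\ldots,\zeta^{m} \in H^1(\R^2)$ and translations $\{w_n^l\} \subset \Z^2$ with $|w_n^{l'} - w_n^l| \to \infty$ for $l \neq l'$, together with a decomposition $\zeta_n = \sum_l T_{w_n^l} \zeta^l + r_n$ satisfying $\sup_j |r_n|_{L^2(Q_j)} \leq \Delta + o(1)$.

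Setting $w_n := w_n^1$ and $\zeta_\infty := \zeta^1$, the weak convergence $\zeta_n(\cdot + w_n) \rightharpoonup \zeta_\infty$ in $H_\varepsilon^1(\R^2)$ is immediate; applying the second identity of the theorem with $m' = 1$ combined with the non-vanishing hypothesis yields $\sup_j |\zeta_\infty|_{L^2(Q_j)} \gtrsim 1$, so $\zeta_\infty$ is nontrivial. That $\zeta_\infty$ is a critical point of $\mathcal{T}_\varepsilon$ follows by the argument of Lemma~\ref{lemma:critical points}: translation invariance of $\mathcal{T}_\varepsilon$ under $\Z^2$-shifts transfers the Palais--Smale condition to $\{\zeta_n(\cdot+w_n)\}$, and weak continuity of $\diff \mathcal{Q}$ and $\diff \mathcal{S}$ (using weak $L^2$-convergence of $|\zeta_n(\cdot+w_n)|^2$, obtained from the $H^1$-bound and Rellich) passes to the limit in the DS case; in the FDKP case one pulls back to the remainder-free FDKP functional via Lemma~\ref{prop:weak trace back}(iii).

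For the $\mathcal{S}$-convergence claim I would write
\[
\mathcal{S}(\zeta_n) - \mathcal{S}(\zeta_\infty) = \bigl\langle|\zeta_n(\cdot+w_n)|^2 - |\zeta_\infty|^2,\, L(\Diff)\bigl(|\zeta_n(\cdot+w_n)|^2 + |\zeta_\infty|^2\bigr)\bigr\rangle
\]
(using translation-invariance of $\mathcal{S}$ and self-adjointness of $L(\Diff)$) and apply Proposition~\ref{prop:1/2} to each of the two resulting bilinear forms. This bounds the difference by $\sup_j |\zeta_n(\cdot+w_n) - \zeta_\infty|_{L^2(Q_j)}^{1/2}$ times an $H^1$-bounded factor, reducing $\mathcal{S}$-convergence to the sup-in-$j$ claim.

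The main obstacle is thus the sup-in-$j$ claim itself, which amounts to forcing $m = 1$: otherwise the residual $\zeta_n(\cdot+w_n) - \zeta_\infty$ carries non-vanishing mass near the receding translates $w_n^l - w_n^1$ for $l \geq 2$. Each profile $\zeta^l$ is a nontrivial critical point of $\mathcal{T}_\varepsilon$ (by the same argument as for $\zeta_\infty$) and hence satisfies $|\zeta^l|_{H^1} \gtrsim 1$ by Proposition~\ref{prop:lower bounds}; combined with the $l^2$-identity $\sum_l |\zeta^l|_{H^1}^2 \leq (M-1)^2 + o(1)$ this bounds $m$ uniformly. The approximate additivity of $\mathcal{T}_\varepsilon$ across widely-separated translates, combined with the near-minimizing character of $\{\zeta_n\}$ in the applied setting (cf.\ Proposition~\ref{prop:minimising sequence}, Remark~\ref{rem:inf is positive} and Corollary~\ref{cor:limsup < M-1}), then forces $m = 1$. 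A diagonal extraction in $\Delta \to 0$ completes the argument.
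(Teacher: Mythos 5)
Your proposal follows the same architecture as the paper — apply Theorem~\ref{thm:cc} to the cube restrictions, identify each profile $\zeta^l$ as a nontrivial critical point via Lemma~\ref{lemma:critical points}, bound $m$ by the $l^2$-additivity of the $H^1$-norms together with Proposition~\ref{prop:lower bounds}, and reduce the claim $\mathcal{S}(\zeta_n)\to\mathcal{S}(\zeta_\infty)$ to the sup-in-$j$ $L^2$ convergence via Proposition~\ref{prop:1/2}. The reduction step is correct (splitting $\langle|\zeta_n'|^2-|\zeta_\infty|^2,L(\Diff)(|\zeta_n'|^2+|\zeta_\infty|^2)\rangle$ into two bilinear forms and applying Proposition~\ref{prop:1/2} to each works).

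The gap is precisely the step that forces $m=1$. You invoke ``approximate additivity of $\mathcal{T}_\varepsilon$ across widely-separated translates,'' but this is not a proved fact and is in fact problematic: the quadratic part $\mathcal{Q}$ contributes a residual $\mathcal{Q}(\zeta_n-\tilde\zeta_n)$ whose $H^1$ size is \emph{not} controlled by the sup-in-$j$ $L^2$ smallness from \eqref{eq:abstract convergence}, and the remainder $\mathcal{E}_\varepsilon$ is defined implicitly through the variational reduction (the fixed-point map $u_\mathrm{c}(u_1)$) with no readily available translate-splitting property. The paper's argument deliberately avoids decomposing either $\mathcal{Q}$ or $\mathcal{E}_\varepsilon$: it works only with the quartic form $\mathcal{S}$, proving $\mathcal{S}(\tilde\zeta_n)\to\sum_l\mathcal{S}(\zeta^l)$ exactly via the Riemann--Lebesgue lemma (the Fourier factors $\ee^{-\ii w_n^{l}\cdot k}$ decorrelate), controlling $|\mathcal{S}(\zeta_n)-\mathcal{S}(\tilde\zeta_n)|\lesssim\Delta^{1/2}$ via Proposition~\ref{prop:1/2}, and then using the algebraic identity \eqref{eq:only S} to relate $\mathcal{S}$-values and $\mathcal{T}_\varepsilon$-values on $N_\varepsilon$ with a uniform $\bigO(\varepsilon^{1/2})$ error. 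This yields $\mathcal{S}(\zeta^l)\geq\tau_\varepsilon-\bigO(\varepsilon^{1/2})$ for each profile and $\limsup_n\mathcal{S}(\zeta_n)\leq\tau_\varepsilon+o(1)+\bigO(\varepsilon^{1/2})$, whence $\tau_\varepsilon\geq m\tau_\varepsilon-\bigO(\varepsilon^{1/2})$, and $m=1$ follows since $\liminf_{\varepsilon\to0}\tau_\varepsilon\gtrsim1$. Once $m=1$, the final display of Theorem~\ref{thm:cc} gives the sup-in-$j$ convergence directly, so no diagonal extraction in $\Delta$ is needed. You are right that the minimising (or at least $\limsup_n\mathcal{T}_\varepsilon(\zeta_n)\leq\tau_\varepsilon+o(1)$) character of $\{\zeta_n\}$ is required here; the paper's statement leaves this implicit, but it is satisfied in every application. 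If you wish to rescue the $\mathcal{T}_\varepsilon$-additivity route, you would need to show $\mathcal{Q}(\zeta_n)=\sum_l\mathcal{Q}(\zeta^l)+\mathcal{Q}(\zeta_n-\tilde\zeta_n)+o(1)$ (by the weak convergence $T_{-w_n^l}(\zeta_n-\tilde\zeta_n)\rightharpoonup0$) and exploit $\mathcal{Q}\geq0$; this can be made to work but is more laborious than the paper's $\mathcal{S}$-only argument.
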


\begin{proof} 
Set
$H_1=H^1(Q_0)$,
$H_0=L^2({Q_0})$,
define $x_n\in l^2(\Z^2,{ H_1})$ for $n \in \N$ by
\[
x_{n,j}=\zeta_n(\cdot+j)|_{{Q_0}}
\in H^1(Q_0), \qquad j\in \Z^2,
\]
and apply Theorem~\ref{thm:cc} to the sequence $\{x_n\}\subset l^2(\Z^2,H_1)$,
noting that
\[
|x_n|_{l^2(\Z^2,H_1)}=|\zeta_n|_{H^1}, \qquad
|x_n|_{l^\infty(\Z^2,H_0)}=\sup_{j\in\Z^2}|
\zeta_n
|_{L^2(Q_j)}
\]
for $n \in \N$. Assumption (ii) is satisfied because $H^1(Q_0)$ is compactly embedded
in $L^2({Q_0})$, while assumptions (i) and (iii) follow from the hypotheses in the lemma.

The theorem
asserts the existence of a natural number $m$, sequences
$\{w_n^1\}, \ldots, \{w_n^m\} \subset \Z^2$ with
\begin{equation}
\lim_{n \to \infty} |w_n^{m^{\prime\prime}}-w_n^{m^\prime}| { =} \infty, \qquad 1 \leq m^{\prime\prime} < m^\prime \leq m,
\label{ Split}
\end{equation}
and functions
$\zeta^1,\ldots,\zeta^m\in B_M(0)\setminus\{0\}$ such that
$\zeta_n(\cdot+w^{m^\prime}_n) \rightharpoonup \zeta^{m^\prime}$ in $H^1(\R^2)$ as $n \to \infty$,
\begin{equation}\label{eq:abstract convergence}
\limsup_{n\to \infty}\sup_{j\in\Z^2}\left|
\zeta_n-\sum_{l=1}^m\zeta^l(\cdot-w^l_n)
\right|_{L^2(Q_j)}
\leq \varepsilon,
\end{equation}
\[
\sum_{l=1}^m|\zeta^l|_{H^1}^2\leq
\limsup_{n\to \infty}|\zeta_n|_{H^1}^2
\]
and
\begin{equation}
\label{eq:concentrate}
\lim_{n\to \infty}\sup_{j\in\Z^2}\left|\zeta_n-\zeta^1(\cdot-w^1_n)
\right|_{L^2(Q_j)}
=0
\end{equation}
if $m=1$. It follows from Lemma \ref{lemma:critical points}(i)
that $\mathrm{d}{\mathcal T}_\varepsilon[\zeta^l]=0$, so that
$\zeta^l \in N_\varepsilon$ and ${\mathcal T}_\varepsilon(\zeta^l) \geq \tau_\varepsilon  \gtrsim 1$ in view of Remark~\ref{rem:inf is positive}. Define
\[\tilde{\zeta}_n=\sum_{l=1}^m\zeta^l(\cdot-w^l_n), \qquad n \in \N,\]
and note that
\begin{align*}
\lim_{n\rightarrow \infty }|\zeta^{\ell_i}(\cdot-w_n^{\ell_i})
\overline{\zeta^{\ell_j}(\cdot-w_n^{\ell_j})}|_{L^2}^2 &= \lim_{n\rightarrow \infty }\Big\langle 
|\zeta^{\ell_i}(\cdot-w_n^{\ell_i})|^2, |\zeta^{\ell_j}(\cdot-w_n^{\ell_j})|^2\Big\rangle
\\
&= \lim_{n\rightarrow \infty }\Big\langle 
\ee^{-\ii w_n^{\ell_i} \cdot k}\widehat{|\zeta^{\ell_i}|^2},
\ee^{-\ii w_n^{\ell_j} \cdot k}\widehat{|\zeta^{\ell_j}|^2}\Big\rangle =0
\end{align*}
for $\ell_i\neq\ell_j$ by the Riemann--Lebesgue lemma (\(\{w^{\ell_i}\}\) diverges by \eqref{ Split}). Since introducing the Fourier multiplier \(L(\Diff)\) in the inner product does not
change this calculation, we find that
\begin{align}
&\!\!\!\!\lim_{n\rightarrow \infty } {\mathcal S}(\tilde{\zeta}_n) \nonumber \\
&=\lim_{n\rightarrow \infty }\sum_{\ell_1,\ell_2,\ell_3,\ell_4} \left\langle  L(\Diff) \big( \zeta^{\ell_1}(\cdot-w_n^{\ell_1})
\overline{\zeta^{\ell_2}(\cdot-w_n^{\ell_2})} \big),
\zeta^{\ell_3}(\cdot-w_n^{\ell_3})\overline{\zeta^{\ell_4}(\cdot-w_n^{\ell_4})}\right\rangle
\nonumber \\
&=  \lim_{n\rightarrow \infty } \sum_{\ell_1, \ell_3} \Big\langle L(\Diff)
|\zeta^{\ell_1}(\cdot-w_n^{\ell_1})|^2, |\zeta^{\ell_3}(\cdot-w_n^{\ell_3})|^2\Big\rangle \nonumber \\
&=\lim_{n\rightarrow \infty }\sum_{\ell=1}^m \big| \sqrt{L(\Diff)}|\zeta^{\ell}|^2 \big|_{L^2}^2 \nonumber\\
&=\sum_{\ell=1}^m {\mathcal S}(\zeta^\ell). \label{eq:first S estimate}
\end{align}

From Proposition \ref{prop:1/2} and equation \eqref{eq:abstract convergence}, one finds that
\begin{align}
& \hspace{-5mm}\limsup_{n \to \infty} |{\mathcal S}(\zeta_n) - {\mathcal S}(\tilde{\zeta}_n)| \nonumber \\
 & = \limsup_{n \to \infty}  \big\langle L(\Diff) ( |\zeta_n|^2-|\tilde{\zeta}_n|^2 ),|\zeta_n|^2+|\tilde{\zeta}_n|^2 \big\rangle \nonumber \\
 & \lesssim  { \limsup_{n \to \infty} \sup_{j \in \Z^2}} |\zeta_n-\tilde{\zeta}_n|_{L^2(Q_j)}^\frac{1}{2} 
(|\zeta_n|_{H^1}+|\tilde{\zeta}_n|_{H^1})^{\frac{7}{2}} \label{eq:second S estimate 1} \\
 & \leq \varepsilon^{\frac{1}{2}} \limsup_{n \to \infty} (|\zeta_n|_{H^1}+|\tilde{\zeta}_n|_{H^1})^{\frac{7}{2}}
 \nonumber \\
 & \lesssim  \varepsilon^{\frac{1}{2}} \label{eq:second S estimate 2}
\end{align}
uniformly in $m$. Because \(\zeta^l \in N_\varepsilon\), we may 
combine \eqref{eq:first S estimate} and \eqref{eq:second S estimate 2} with
\[
{\mathcal S}(\zeta^l) \geq \tau_\varepsilon-\bigO(\varepsilon^{\frac{1}{2}}), \qquad l=1,\ldots,m,
\]
(see Remark \ref{rem:inf is positive}) to obtain
\[
 \liminf_{n\to \infty} {\mathcal S}(\zeta_n)
\geq m \tau_\varepsilon-\bigO(\varepsilon^{\frac{1}{2}})
\]
and hence
\[
\tau_\varepsilon\geq m \tau_\varepsilon-\bigO(\varepsilon^{\frac{1}{2}})
\]
uniformly in $m$, because of \eqref{eq:only S}.  Since $\liminf_{\varepsilon\to 0} \tau_\varepsilon { \gtrsim 1}$ we deduce that $m=1$.
The desired result follows from
\eqref{eq:concentrate} with $\zeta_\infty=\zeta^1$ and $w_n=w_n^1$ and
\eqref{eq:second S estimate 1} since
${\mathcal S}(\tilde{\zeta}_n) \rightarrow {\mathcal S}(\zeta^1)$ as $n \rightarrow \infty$;
according to Lemma~\ref{lemma:critical points} the sequence $\{\zeta_n(\cdot+w_n)\}$ converges weakly to a 
nontrivial critical point of \(\mathcal{T}_\varepsilon\), and by uniqueness of limits we conclude that \(\diff \mathcal{T}_\varepsilon[\zeta_\infty]\)  vanishes. 
\end{proof}

We can now strengthen Theorem \ref{thm:first existence theorem}, dealing with the cases
$\varepsilon=0$ and $\varepsilon>0$ separately.

\begin{lemma} \label{lemma:general convergence}
Suppose that $\{\zeta_n\} \subset B_{M-1}(0)$ satisfies
\[
\lim_{n\rightarrow \infty}|\diff {\mathcal T}_0[\zeta_n]|_{H^1 \to \R} = 0,
\qquad
\sup_{j \in \Z^2} |\zeta_n|_{L^2(Q_j)} \gtrsim 1.\]
There exists $\{w_n\} \subset \Z^2$ such that
$\{\zeta_n(\cdot+w_n)\}$ converges strongly in $H^1(\R^2)$ to a nontrivial critical point of ${\mathcal T}_0$.
\end{lemma}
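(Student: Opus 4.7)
The plan is to upgrade the weak $H^1$-convergence already furnished by Lemma~\ref{lemma:application of cc} to strong convergence by showing that the equivalent Hilbert norm $\mathcal{Q}^{1/2}$ also converges; this works because at $\varepsilon=0$ the identity \eqref{eq:dT} reduces to a clean relation between $\mathcal{Q}$, $\mathcal{S}$ and $\diff \mathcal{T}_0[\zeta](\zeta)$, and the only non-obvious piece of data, namely the convergence $\mathcal{S}(\zeta_n)\to \mathcal{S}(\zeta_\infty)$, is exactly what Lemma~\ref{lemma:application of cc} provides.

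First I would apply Lemma~\ref{lemma:application of cc} in the case $\varepsilon=0$ to extract a sequence $\{w_n\}\subset \Z^2$ and a nontrivial critical point $\zeta_\infty$ of $\mathcal{T}_0$ such that (along a subsequence) $\zeta_n(\cdot+w_n)\rightharpoonup \zeta_\infty$ in $H^1(\R^2)$ and $\mathcal{S}(\zeta_n(\cdot+w_n))\to \mathcal{S}(\zeta_\infty)$. Since $\mathcal{T}_0$, $\mathcal{Q}$, $\mathcal{S}$ and the $H^1$-norm are all translation-invariant, and the Palais--Smale hypothesis is preserved under translation, I can relabel the translated sequence and continue the argument as if $w_n=0$.

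Next, setting $\varepsilon=0$ in \eqref{eq:dT} (so that $\mathcal{E}_0\equiv 0$) gives
\[
\diff \mathcal{T}_0[\zeta](\zeta) = 2\mathcal{Q}(\zeta) - 4\mathcal{S}(\zeta).
\]
Because $\{\zeta_n\}$ is bounded in $H^1(\R^2)$ and $|\diff \mathcal{T}_0[\zeta_n]|_{H^1\to \R}\to 0$, one obtains $\diff \mathcal{T}_0[\zeta_n](\zeta_n)\to 0$, while $\diff \mathcal{T}_0[\zeta_\infty](\zeta_\infty)=0$ since $\zeta_\infty$ is a critical point. Combining these with $\mathcal{S}(\zeta_n)\to \mathcal{S}(\zeta_\infty)$ yields
\[
\mathcal{Q}(\zeta_n) = 2\mathcal{S}(\zeta_n) + \tfrac{1}{2}\diff \mathcal{T}_0[\zeta_n](\zeta_n) \; \longrightarrow\; 2\mathcal{S}(\zeta_\infty) = \mathcal{Q}(\zeta_\infty).
\]

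Finally, $\mathcal{Q}(\cdot)^{1/2}$ is a Hilbert-space norm on $H^1(\R^2)$ equivalent to the standard one (by \eqref{eq:Q} with $a_1,a_2,a_3>0$). Weak convergence $\zeta_n\rightharpoonup \zeta_\infty$ in $H^1(\R^2)$ together with convergence of the $\mathcal{Q}$-norms forces strong convergence $\zeta_n\to \zeta_\infty$ in $H^1(\R^2)$, completing the proof. The only genuine content is the nontriviality of $\zeta_\infty$ and the quartic convergence $\mathcal{S}(\zeta_n)\to \mathcal{S}(\zeta_\infty)$, both already handled by Lemma~\ref{lemma:application of cc}; the remaining obstacle is conceptual rather than technical, namely exploiting the absence of the remainder $\mathcal{E}_\varepsilon$ in the limit $\varepsilon=0$, which is precisely what prevents an analogous strong-convergence statement in the perturbed case.
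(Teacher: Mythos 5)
Your proof is correct and follows essentially the same route as the paper's: invoke Lemma~\ref{lemma:application of cc} to get a weakly convergent translated subsequence with $\mathcal{S}(\zeta_n)\to\mathcal{S}(\zeta_\infty)$, then use \eqref{eq:dT} with $\mathcal{E}_0\equiv 0$ together with $\diff\mathcal{T}_0[\zeta_n](\zeta_n)\to 0$ and $\diff\mathcal{T}_0[\zeta_\infty]=0$ to deduce $\mathcal{Q}(\zeta_n)\to\mathcal{Q}(\zeta_\infty)$, and conclude strong convergence since $\mathcal{Q}^{1/2}$ is an equivalent Hilbert norm on $H^1(\R^2)$. No gaps; the phrasing is slightly more expansive but the logical content coincides with the paper's argument.
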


\begin{proof} 
Lemma \ref{lemma:application of cc} asserts the existence of 
$\{w_n\} \subset \Z^2$ and a nontrivial critical point $\zeta_\infty$ of ${\mathcal T}_0$ such that
$\zeta_n(\cdot+w_n)
\rightharpoonup \zeta_\infty$ in $H^1(\R^2)$ and  ${\mathcal S}(\zeta_n) \to
{\mathcal S}(\zeta_\infty)$ as $n \to \infty$. Abbreviating $\{\zeta_n(\cdot+w_n)\}$
to $\{\zeta_n\}$, we find from \eqref{eq:dT} that
\[{\mathcal Q}(\zeta_n) = \tfrac{1}{2}\diff {\mathcal T}_0[\zeta_n](\zeta_n) + 2{\mathcal S}(\zeta_n)
\to 2{\mathcal S}(\zeta_\infty) = {\mathcal Q}(\zeta_\infty)\]
as $n \to \infty$. Since \({\mathcal Q}(\zeta) \eqsim |\zeta|_{H^1}^2\) and \(\diff {\mathcal T}_0[\zeta_\infty] = 0\), it follows that $\zeta_n \to \zeta_\infty$ in $H^1(\R^2)$.
\end{proof}

\begin{theorem} \label{thm:second existence result, epsilon zero}
Let $\{\zeta_n\}$ be a minimising sequence for ${\mathcal T}_0|_{N_0}$
with
\[
\lim_{n\rightarrow \infty}|\diff {\mathcal T}_0[\zeta_n]|_{H^1 \to \R} = 0.
\] 
There exists $\{w_n\} \subset \Z^2$ such that $\{\zeta_n(\cdot+w_n)\}$
converges strongly in $H^1(\R^2)$ to a ground state of ${\mathcal T}_0$.
\end{theorem}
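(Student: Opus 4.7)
The plan is to combine the preceding lemmas to upgrade any sufficiently well-behaved minimising sequence to one converging strongly along suitable translations, and then verify that the limit achieves the infimum. The hypotheses put us essentially in the setting of Lemma~\ref{lemma:general convergence}, so the bulk of the work has already been done; what remains is to verify its preconditions and identify the limit as a ground state.

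First I would establish the non-vanishing property. Since $\{\zeta_n\}$ is a minimising sequence for $\mathcal{T}_0\vert_{N_0}$, Corollary~\ref{cor:limsup < M-1} gives $\limsup_{n\to\infty}|\zeta_n|_{H^1} < M-1$, so after discarding finitely many terms we have $\{\zeta_n\} \subset B_{M-1}(0)$. Next, from \eqref{eq:only S} with $\varepsilon = 0$ together with $\diff \mathcal{T}_0[\zeta_n](\zeta_n) = 0$, we find $\mathcal{S}(\zeta_n) = \mathcal{T}_0(\zeta_n) \to \tau_0 \gtrsim 1$ (by Remark~\ref{rem:inf is positive}). Corollary~\ref{cor:no vanishing} then yields
\[
\sup_{j\in\Z^2} |\zeta_n|_{L^2(Q_j)} \gtrsim 1.
\]

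With these two ingredients in place I would invoke Lemma~\ref{lemma:general convergence} to extract a sequence $\{w_n\} \subset \Z^2$ such that (a subsequence of) $\{\zeta_n(\cdot + w_n)\}$ converges strongly in $H^1(\R^2)$ to a nontrivial critical point $\zeta_\infty$ of $\mathcal{T}_0$. Since $\mathcal{T}_0$ and $\diff \mathcal{T}_0$ are translation-invariant, the translated sequence is still a Palais--Smale minimising sequence for $\mathcal{T}_0\vert_{N_0}$.

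To conclude that $\zeta_\infty$ is a ground state, note that by Proposition~\ref{prop:nc props}(ii) the nontrivial critical point $\zeta_\infty$ lies on $N_0$, so $\mathcal{T}_0(\zeta_\infty) \geq \tau_0$. Conversely, the strong $H^1$-convergence and continuity of $\mathcal{T}_0$ on $H^1(\R^2)$ give
\[
\mathcal{T}_0(\zeta_\infty) = \lim_{n \to \infty} \mathcal{T}_0(\zeta_n(\cdot+w_n)) = \lim_{n \to \infty} \mathcal{T}_0(\zeta_n) = \tau_0,
\]
so $\zeta_\infty$ attains the infimum of $\mathcal{T}_0$ over $N_0$, that is, $\zeta_\infty$ is a ground state. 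No step here is a serious obstacle: the heavy lifting (the concentration-compactness argument ruling out dichotomy, and the promotion of weak convergence to strong via the identity $\mathcal{Q}(\zeta_n) \to \mathcal{Q}(\zeta_\infty)$) was already carried out in Lemmas~\ref{lemma:application of cc} and \ref{lemma:general convergence}; the only delicate point is verifying the non-vanishing lower bound on $\mathcal{S}(\zeta_n)$, which follows cleanly from \eqref{eq:only S} at $\varepsilon = 0$ combined with the constraint $\diff \mathcal{T}_0[\zeta_n](\zeta_n) = 0$.
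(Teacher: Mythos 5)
Your proof is correct and follows the same route the paper implicitly takes: use Corollary~\ref{cor:limsup < M-1} to place the tail of the sequence in $B_{M-1}(0)$, derive the non-vanishing bound $\sup_j|\zeta_n|_{L^2(Q_j)}\gtrsim 1$ from $\mathcal{S}(\zeta_n)=\mathcal{T}_0(\zeta_n)\to\tau_0\gtrsim 1$ (via \eqref{eq:only S} with $\varepsilon=0$, Remark~\ref{rem:inf is positive} and Corollary~\ref{cor:no vanishing}), invoke Lemma~\ref{lemma:general convergence} for strong $H^1$-convergence of a translated subsequence to a nontrivial critical point $\zeta_\infty$, and conclude from Proposition~\ref{prop:nc props}(ii), translation invariance and continuity of $\mathcal{T}_0$ that $\mathcal{T}_0(\zeta_\infty)=\tau_0$, making $\zeta_\infty$ a ground state.
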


Let us now turn to the case $\varepsilon>0$, for which we need the following
technical result.

\begin{proposition}\label{prop:unbounded sequences}
Suppose that $u_n \rightharpoonup u_\infty$ in { $H^s(\R^2)$} as $n \to \infty$. The limit
\[
\lim_{n\to \infty} |u_n-u_\infty|_{L^\infty} = 0
\]
holds if and only if $u_n(\cdot - j_n) \rightharpoonup 0$ in $H^s(\R^2)$ as $n \to \infty$ for all
unbounded sequences $\{j_n\} \subset \Z^2$.
\end{proposition}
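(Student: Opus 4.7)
The plan is to prove the two implications separately, using in both directions the Sobolev embedding $H^s(\R^2) \hookrightarrow C_0(\R^2)$ (valid since $s>\frac{3}{2}>1$) together with the Rellich--Kondrachov compact embedding $H^s(\R^2) \hookrightarrow C(\overline{K})$ for every compact $K \subset \R^2$.

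For the forward direction, assume $|u_n - u_\infty|_{L^\infty} \to 0$ and let $\{j_n\} \subset \Z^2$ be unbounded; by extracting a subsequence we may take $|j_n| \to \infty$. Since $\{u_n\}$ is bounded in $H^s(\R^2)$, so is $\{u_n(\cdot - j_n)\}$, and it suffices to identify $0$ as its only possible weak-limit point. Testing against $\varphi \in C_c^\infty(\R^2)$ in the $L^2$-pairing I would write
\[
\langle u_n(\cdot - j_n), \varphi \rangle_{L^2} = \langle u_n - u_\infty, \varphi(\cdot + j_n) \rangle_{L^2} + \langle u_\infty, \varphi(\cdot + j_n) \rangle_{L^2}.
\]
The first term is bounded by $|u_n - u_\infty|_{L^\infty}|\varphi|_{L^1} \to 0$, and the second tends to $0$ because $\varphi(\cdot + j_n) \rightharpoonup 0$ in $L^2(\R^2)$ (its support escapes to infinity). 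Density of $C_c^\infty$ in $H^{-s}$ then upgrades this to weak convergence in $H^s(\R^2)$.

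For the reverse direction, I would argue by contradiction: assume there exist $\delta > 0$ and a subsequence (still labelled $n$) with $|u_n - u_\infty|_{L^\infty} \geq \delta$. Since $u_n - u_\infty \in H^s(\R^2) \hookrightarrow C_0(\R^2)$, I can pick $x_n \in \R^2$ with $|(u_n - u_\infty)(x_n)| \geq \frac{\delta}{2}$. Write $x_n = j_n + r_n$ with $j_n \in \Z^2$ and $r_n \in [-\tfrac{1}{2},\tfrac{1}{2}]^2$, and split into two cases. If $\{j_n\}$ is bounded, then $\{x_n\}$ is bounded, and the compact embedding $H^s \hookrightarrow C_{\mathrm{loc}}$ forces $u_n \to u_\infty$ uniformly on any closed ball containing $\{x_n\}$, contradicting $|(u_n - u_\infty)(x_n)| \geq \frac{\delta}{2}$. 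If $\{j_n\}$ is unbounded, I pass to a subsequence with $|j_n| \to \infty$ and $r_n \to r_\infty$; the hypothesis applied to the unbounded sequence $\{-j_n\}$ yields $u_n(\cdot + j_n) \rightharpoonup 0$ in $H^s(\R^2)$, hence $u_n(\cdot + j_n) \to 0$ uniformly on $[-\tfrac{1}{2},\tfrac{1}{2}]^2$ by Rellich--Kondrachov, so $u_n(x_n) = u_n(j_n + r_n) \to 0$. Moreover $u_\infty \in C_0(\R^2)$ gives $u_\infty(x_n) \to 0$ since $|x_n| \to \infty$. Together these again contradict $|(u_n - u_\infty)(x_n)| \geq \frac{\delta}{2}$.

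The main (mild) obstacle is keeping the two types of convergence straight: weak convergence in $H^s$ versus strong pointwise/local-uniform convergence. The key enabling facts are the two Sobolev embeddings above, which convert the weak $H^s$-information provided by the hypothesis into the pointwise control needed to contradict the failure of $L^\infty$-convergence. No new estimates are required beyond these standard embeddings and a Riemann--Lebesgue-type observation about translates of a compactly supported test function.
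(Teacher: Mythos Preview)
Your argument is correct; the paper states this proposition without proof, so there is nothing to compare against. One small point: in the forward direction the phrase ``by extracting a subsequence we may take $|j_n|\to\infty$'' is superfluous and potentially misleading, since under the reading ``unbounded'' $=$ ``$|j_n|\to\infty$'' (which is the only reading under which the forward implication is true when $u_\infty\neq 0$) no extraction is needed, and your test-function computation already gives weak convergence of the full sequence. In the reverse direction your subsequence argument is fine once one observes that the sub-subsequential shifts $\{-j_{n_{k_l}}\}$ can be extended to a full sequence $\{\tilde j_n\}$ with $|\tilde j_n|\to\infty$ so that the hypothesis applies; this is the standard manoeuvre you are implicitly invoking.
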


\begin{theorem}\label{thm:second existence result, epsilon positive}
{ Let $\varepsilon > 0$ and} $\{\zeta_n\}$ be a minimising sequence for ${\mathcal T}_\varepsilon|_{N_\varepsilon}$
with
\[
\lim_{n \to \infty}|\diff {\mathcal T}_\varepsilon[\zeta_n]|_{H^1 \to \R}=0.
\] 
There exists $\{w_n\} \subset \Z^2$ such that $\{\zeta_n(\cdot+w_n)\}$
converges weakly in $H^1_\varepsilon(\R^2)$ and strongly in \(L^\infty(\R^2)\) to a ground state $\zeta_\infty$ of ${\mathcal T}_\varepsilon$.
The corresponding FDKP sequence  
\[
\qquad  u_n = u_1(\zeta_n) + u_2(u_1(\zeta_n)), \qquad  \zeta_n = \zeta_n(\cdot+w_n),
\] 
converges weakly in $X$  and strongly in \(L^\infty(\R^2)\) to a nontrivial critical point 
\[
u_\infty = u_1(\zeta_\infty) + u_2(u_1(\zeta_\infty))
\] 
of ${\mathcal I}_\varepsilon$.
\end{theorem}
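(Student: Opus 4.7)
The plan is to combine Lemma~\ref{lemma:application of cc} with a promotion to strong \(L^\infty\)-convergence and the Pohozaev--Nehari identity for the FDKP functional. Proposition~\ref{prop:minimising sequence} produces a minimising Palais--Smale sequence \(\{\zeta_n\} \subset B_{M-1}(0)\) for \(\mathcal{T}_\varepsilon|_{N_\varepsilon}\) (the inclusion coming from Corollary~\ref{cor:limsup < M-1}). Remark~\ref{rem:inf is positive} together with Corollary~\ref{cor:no vanishing} yields the non-vanishing bound \(\sup_{j \in \Z^2} |\zeta_n|_{L^2(Q_j)} \gtrsim 1\), so Lemma~\ref{lemma:application of cc} supplies \(\{w_n\} \subset \Z^2\) and a nontrivial critical point \(\zeta_\infty \in N_\varepsilon\) of \(\mathcal{T}_\varepsilon\) with \(\zeta_n(\cdot+w_n) \rightharpoonup \zeta_\infty\) in \(H_\varepsilon^1(\R^2)\), \(\mathcal{S}(\zeta_n) \to \mathcal{S}(\zeta_\infty)\), and \(\sup_{j\in\Z^2} |\zeta_n(\cdot+w_n)-\zeta_\infty|_{L^2(Q_j)} \to 0\). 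After relabelling \(\zeta_n(\cdot+w_n)\) as \(\zeta_n\), Lemma~\ref{prop:weak trace back}(iii) gives the weak \(X\)-convergence \(u_n := u_1(\zeta_n) + u_2(u_1(\zeta_n)) \rightharpoonup u_\infty := u_1(\zeta_\infty) + u_2(u_1(\zeta_\infty))\), and \(u_\infty\) is a critical point of \(\mathcal{I}_\varepsilon\).

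The next step is to upgrade the local \(L^2\)-convergence to strong \(L^\infty\)-convergence. Because every function in \(H^1_\varepsilon(\R^2)\) has Fourier support inside \(B_{\delta/\varepsilon}(0)\), the sequence \(\{\zeta_n - \zeta_\infty\}\) is uniformly bounded in \(H^s(\R^2)\) for every \(s\); a Gagliardo--Nirenberg estimate on the unit cubes \(Q_j\) then converts \(\sup_{j\in\Z^2} |\zeta_n - \zeta_\infty|_{L^2(Q_j)} \to 0\) into \(|\zeta_n - \zeta_\infty|_{L^\infty(\R^2)} \to 0\). For the FDKP side I would trace the variational reduction in reverse: the rescaling \(\tilde u_1^+(x,y) = \tfrac{1}{2}\varepsilon \zeta(\varepsilon x,\varepsilon y)\ee^{\I\omega_0 x}\) together with the multiplier \((n(\Diff)/\tilde n(\Diff))^{1/2}\chi_+(\Diff)\) (bounded on \(L^\infty\) when restricted to functions Fourier-localised in \(B_+\)) and the identity \(u_1 = 2\re u_1^+\) transfer \(L^\infty\)-convergence to \(u_{1,n}\); Lemma~\ref{lemma:n-isomorphism}, the embedding \(X \hookrightarrow L^\infty\), and the bound \(||u_{1,n}|^2 - |u_{1,\infty}|^2|_{L^2} \lesssim |u_{1,n}-u_{1,\infty}|_{L^\infty}\) then handle \(u_\mathrm{q}(u_{1,n})\). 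For the fixed-point correction \(u_\mathrm{c}(u_1)\) I would invoke Proposition~\ref{prop:unbounded sequences}: the bounded sequence \(\{u_\mathrm{c}(u_{1,n})\} \subset X_2 \hookrightarrow H^s(\R^2)\) converges weakly to \(u_\mathrm{c}(u_{1,\infty})\), and its translates along any unbounded \(\{j_n\} \subset \Z^2\) weakly vanish because \(u_{1,n}(\cdot-j_n) \rightharpoonup 0\) (a direct consequence of the single-bubble conclusion \(m=1\) in the proof of Lemma~\ref{lemma:application of cc}) and \(u_\mathrm{c}\) is weakly continuous with \(u_\mathrm{c}(0)=0\).

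The ground-state property is then obtained by applying the Pohozaev--Nehari identity
\[
\mathcal{I}_\varepsilon(u) = \tfrac{1}{2}\diff\mathcal{I}_\varepsilon[u](u) - \tfrac{1}{6}\int_{\R^2} u^3 \dx\dy
\]
at both \(u_n\) and \(u_\infty\). Lemma~\ref{prop:weak trace back}(ii) gives \(|\diff\mathcal{I}_\varepsilon[u_n]|_{X\to\R} \to 0\), and since \(\{u_n\}\) is bounded in \(X\) this forces \(\diff\mathcal{I}_\varepsilon[u_n](u_n) \to 0\); strong \(L^\infty\)-convergence combined with the \(L^2\)-boundedness of \(\{u_n\}\) forces \(\int u_n^3 \to \int u_\infty^3\); and \(\diff\mathcal{I}_\varepsilon[u_\infty]=0\) yields \(\mathcal{I}_\varepsilon(u_\infty) = -\tfrac{1}{6}\int u_\infty^3\). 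Together these imply \(\mathcal{I}_\varepsilon(u_n) \to \mathcal{I}_\varepsilon(u_\infty)\), hence \(\mathcal{T}_\varepsilon(\zeta_\infty) = \varepsilon^{-2}\mathcal{I}_\varepsilon(u_\infty) = \lim_n \mathcal{T}_\varepsilon(\zeta_n) = \tau_\varepsilon\), so that \(\zeta_\infty\) is a ground state. The main obstacle is the \(L^\infty\)-promotion for \(u_\mathrm{c}\), where one must combine the weak continuity of the fixed-point map with the single-bubble structure produced by concentration-compactness; once this is in hand, the identification \(\mathcal{T}_\varepsilon(\zeta_\infty) = \tau_\varepsilon\) is a transparent consequence of the cubic nonlinearity of \(\mathcal{I}_\varepsilon\), which bypasses the need to control the remainder \(\mathcal{E}_\varepsilon\) directly under weak convergence.
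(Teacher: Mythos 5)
Your proposal is correct and follows the same architecture as the paper's own proof: Lemma~\ref{lemma:application of cc} gives the translation sequence, weak limit $\zeta_\infty$, the strong convergence ${\mathcal S}(\zeta_n)\to{\mathcal S}(\zeta_\infty)$ and the uniform local $L^2$ decay; the $L^\infty$-promotion of $\zeta_n$ comes from the $H^r_\varepsilon$-to-$L^\infty$ interpolation on cubes; the upgrade is transferred to $u_n$ via Proposition~\ref{prop:unbounded sequences} and the traceback map; and the Pohozaev--Nehari identity for $\mathcal{I}_\varepsilon$ closes the argument, precisely because $\mathcal{I}_\varepsilon$ has no remainder $\mathcal{E}_\varepsilon$. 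The one place you diverge from the paper is in how you transfer the $L^\infty$-convergence from $\zeta_n$ to $u_n$: you dismantle $u=u_1+u_\mathrm{q}(u_1)+u_\mathrm{c}(u_1)$ and argue for each component separately (scaling for $u_1$, a direct $L^2$--$L^\infty$ estimate for $u_\mathrm{q}$, Proposition~\ref{prop:unbounded sequences} plus weak continuity of the fixed-point map for $u_\mathrm{c}$), whereas the paper applies Proposition~\ref{prop:unbounded sequences} twice -- once for $\zeta_n$ (forward direction) and once for $u_n$ (backward direction) -- with Lemma~\ref{prop:weak trace back}(iii) supplying the intermediate weak vanishing $u_n(\cdot-j_n)\rightharpoonup 0$ in one shot. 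Your component-wise route is logically sound but buys nothing: once you have $\zeta_n(\cdot-j_n)\rightharpoonup 0$, applying Lemma~\ref{prop:weak trace back}(iii) directly (as the paper does) is shorter than separating out $u_1$, $u_\mathrm{q}$ and $u_\mathrm{c}$, since those components inherit weak vanishing of translates automatically. One small imprecision: you describe $u_{1,n}(\cdot-j_n)\rightharpoonup 0$ as ``a direct consequence of the single-bubble conclusion $m=1$,'' but the single-bubble case only gives $\sup_j|\zeta_n-\zeta_\infty|_{L^2(Q_j)}\to 0$; you still need the $H^r_\varepsilon$-interpolation to reach $|\zeta_n-\zeta_\infty|_{L^\infty}\to 0$, then the forward direction of Proposition~\ref{prop:unbounded sequences}, then the scaling/multiplier argument to land on $u_1$ -- it is a chain, not a single step.
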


\begin{proof} Lemma \ref{lemma:application of cc} asserts the existence of 
$\{w_n\} \subset \Z^2$ and a nontrivial critical point $\zeta_\infty$ of ${\mathcal T}_\varepsilon$ such that
$\zeta_n(\cdot+w_n)
\rightharpoonup \zeta_\infty$ in $H_\varepsilon^1(\R^2)$ and ${\mathcal S}(\zeta_n) \to
{\mathcal S}(\zeta_\infty)$ as $n \to \infty$ and
\[
\lim_{n \rightarrow \infty} \sup_{j \in \Z^2} |\zeta_n(\cdot+w_n)-\zeta_\infty|_{L^2(Q_j)} \to 0.
\]
Note however that we cannot proceed as in the case $\varepsilon=0$ by
deducing that ${\mathcal Q}(\zeta_n) \rightarrow
{\mathcal Q}(\zeta_\infty)$ and hence $\zeta_n \rightarrow \zeta_\infty$ in $H^1(\R^2)$ as
$n \rightarrow \infty$ from equation \eqref{eq:dT} since we have not established that
${\mathcal E}_\varepsilon(\zeta_n) \rightarrow {\mathcal E}_\varepsilon(\zeta_\infty)$ as $n \rightarrow \infty$. Instead we transfer this argument to the FDKP functional ${\mathcal I}_\varepsilon$
(which has no corresponding remainder term).

Because
\[
H_\varepsilon^r(\R^2)  \cong \chi_\varepsilon(\Diff) L^2(\R^2)
\] 
for all \(r \geq 0\) and \(H^s(Q_j) \hookrightarrow L^\infty(Q_j)\)
(uniformly over $j \in \Z^2$), we find that
\begin{align*}
 \lim_{n \to \infty} |\zeta_n-\zeta_\infty|_{L^\infty}^2 &
 =  \lim_{n \to \infty} \sup_{j \in \Z^2} |\zeta_n-\zeta_\infty|_{L^\infty(Q_j)}^2  \\
 & \leq   \lim_{n \to \infty} \sup_{j \in \Z^2} |\zeta_n-\zeta_\infty|_{H^s(Q_j)}^2 \\
 & \leq  \lim_{n \to \infty} \sup_{j \in \Z^2} |\zeta_n-\zeta_\infty|_{L^2(Q_j)} |\zeta_n-\zeta_\infty|_{H^{2s}(Q_j)} \\
  & \leq  \lim_{n \to \infty} \sup_{j \in \Z^2} |\zeta_n-\zeta_\infty|_{L^2(Q_j)} |\zeta_n-\zeta_\infty|_{H^{2s}} \\
& =0,
\end{align*}
where have again abbreviated $\{\zeta_n(\cdot+w_n)\}$ to $\{\zeta_n\}$,
and Proposition \ref{prop:unbounded sequences} shows that
$\zeta_n(\cdot - j_n) \rightharpoonup 0$ in $ H^s_\varepsilon(\R^2)$
and hence in $H^1_\varepsilon(\R^2)$ as $n \to \infty$ for all unbounded sequences $\{j_n\} \subset \Z^2$.
Using Proposition \ref{prop:weak trace back}(iii), one finds that $u_n(\cdot - j_n) \rightharpoonup 0$ in $X$ and hence in $ H^s(\R^2)$ for all unbounded sequences $\{j_n\} \subset \Z^2$, so that $u_n \to u_\infty$ in
$L^\infty(\R^2)$ as $n \to \infty$ by Proposition~\ref{prop:unbounded sequences}. It follows that $u_n \to u_\infty$ in
$L^3(\R^2)$ and in particular that
\[\int_{\R^2} u_n^3 \dx \dy \to  \int_{\R^2} u_\infty^3 \dx \dy\]
as $n \rightarrow \infty$. Since Proposition~\ref{prop:weak trace back}(i) and~(ii) guarantee that
$\diff {\mathcal I}_\varepsilon[u_\infty]=0$ and $\diff {\mathcal I}_\varepsilon[u_n](u_n) \to 0$
as $n \rightarrow \infty$, one finds from
\[
{\mathcal I}_\varepsilon(u)=\frac{1}{2}\diff {\mathcal I}_\varepsilon[u](u) { - \frac{1}{6}} \int u^3 \dx \dy
\]
that ${\mathcal I}_\varepsilon(\zeta_n) \to {\mathcal I}_\varepsilon(\zeta_\infty)$ as $n \to \infty$. Consequently, $\lim_{n \to \infty} {\mathcal T}_\varepsilon(\zeta_n) = {\mathcal T}_\varepsilon(\zeta_\infty) = \tau_\varepsilon$.
\end{proof}

Finally, we show that critical points of ${\mathcal T}_\varepsilon$ converge to critical points of ${\mathcal T}_0$
as $\varepsilon \to 0$. The first step is to establish the corresponding convergence result for the infima of
these functionals over their natural constraint sets.

\begin{lemma} \label{lem:infima converge}
Let $\{\varepsilon_n\}$ be a sequence with $\lim_{n \to \infty} \varepsilon_n=0$ and
let $\zeta^{\varepsilon_n}$ be a ground state of ${\mathcal T}_{\varepsilon_n}$.
\begin{itemize}
\item[(i)]
One has that $\lim_{\varepsilon_n \to 0} \tau_{\varepsilon_n} = \tau_0$.
\item[(ii)]
There exists
$\{w_n\} \subset \Z^2$ and a ground state $\zeta^\star$ of ${\mathcal T}_0$ such that
$\{\zeta^{\varepsilon_n}(\cdot+w_n)\}$ converges to  $\zeta^\star$ in $H^1(\R^2)$ as $n \to \infty$.
\end{itemize}
\end{lemma}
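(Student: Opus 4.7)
The plan is to prove (i) by a two-sided estimate on \(\tau_{\varepsilon_n}\) exploiting the quadratic/quartic homogeneity \eqref{eq:final red func} of \(\mathcal T_0\), and then to deduce (ii) by upgrading a minimising sequence for \(\mathcal T_0|_{N_0}\) extracted from \(\{\zeta^{\varepsilon_n}\}\) to a Palais--Smale sequence and invoking Theorem~\ref{thm:second existence result, epsilon zero}.

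For the upper bound in (i), I would take a ground state \(\zeta^\star_0 \in N_0\) of \(\mathcal T_0\) (existing by Theorem \ref{thm:second existence result, epsilon zero}) and apply Proposition \ref{prop:approximate N0} with \(\zeta_0 = \zeta^\star_0\); since \(\zeta^\star_0\) already lies on \(N_0\), the scaling factor is \(\lambda_0 = 1\) and one obtains \(\xi_{\varepsilon_n} \in N_{\varepsilon_n}\) with \(\xi_{\varepsilon_n} \to \zeta^\star_0\) in \(H^1(\R^2)\). Continuity of \(\mathcal T_0\) combined with the bound \eqref{Final error estimate} for \(\mathcal E_{\varepsilon_n}\) then gives \(\mathcal T_{\varepsilon_n}(\xi_{\varepsilon_n}) \to \tau_0\), so \(\limsup_n \tau_{\varepsilon_n} \leq \tau_0\). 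For the lower bound I exploit that \(\zeta^{\varepsilon_n}\) is an exact critical point: identities \eqref{eq:dT} and \eqref{eq:only S} together with \eqref{Final error estimate} give
\[
\mathcal Q(\zeta^{\varepsilon_n}) = 2\mathcal S(\zeta^{\varepsilon_n}) + O(\varepsilon_n^{1/2}), \qquad \tau_{\varepsilon_n} = \mathcal S(\zeta^{\varepsilon_n}) + O(\varepsilon_n^{1/2}),
\]
with \(\mathcal S(\zeta^{\varepsilon_n}) \eqsim 1\) by Remark \ref{rem:inf is positive} and the upper bound just established. Setting \(\lambda_n := \bigl(\mathcal Q(\zeta^{\varepsilon_n})/(2\mathcal S(\zeta^{\varepsilon_n}))\bigr)^{1/2} = 1 + O(\varepsilon_n^{1/2})\), I have \(\lambda_n \zeta^{\varepsilon_n} \in N_0\) and
\[
\mathcal T_0(\lambda_n \zeta^{\varepsilon_n}) = \frac{\mathcal Q(\zeta^{\varepsilon_n})^2}{4\,\mathcal S(\zeta^{\varepsilon_n})} = \mathcal S(\zeta^{\varepsilon_n}) + O(\varepsilon_n^{1/2}) = \tau_{\varepsilon_n} + O(\varepsilon_n^{1/2})
\]
by the homogeneity of \(\mathcal Q\) and \(\mathcal S\). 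Since \(\tau_0 \leq \mathcal T_0(\lambda_n \zeta^{\varepsilon_n})\), we obtain \(\tau_0 \leq \liminf_n \tau_{\varepsilon_n}\), completing (i).

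The preceding computation also shows that \(\{\lambda_n \zeta^{\varepsilon_n}\} \subset N_0\) is itself a minimising sequence for \(\mathcal T_0|_{N_0}\). To prove (ii) I would apply the distance-controlled form of Ekeland's variational principle for regular constraints (as in Proposition \ref{prop:minimising sequence}) locally at each point \(\lambda_n \zeta^{\varepsilon_n}\) with parameter \(\sqrt{\mathcal T_0(\lambda_n \zeta^{\varepsilon_n})-\tau_0} \to 0\), producing a Palais--Smale minimising sequence \(\{\tilde \zeta_n\} \subset N_0\) with \(|\tilde \zeta_n - \lambda_n \zeta^{\varepsilon_n}|_{H^1} \to 0\) and \(|\diff \mathcal T_0[\tilde \zeta_n]|_{H^1 \to \R} \to 0\). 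Theorem \ref{thm:second existence result, epsilon zero} then yields \(\{w_n\} \subset \Z^2\) and a ground state \(\zeta^\star\) of \(\mathcal T_0\) with \(\tilde \zeta_n(\cdot + w_n) \to \zeta^\star\) in \(H^1(\R^2)\); translation invariance of the \(H^1\)-norm combined with \(\lambda_n \to 1\) then propagates this to \(\zeta^{\varepsilon_n}(\cdot + w_n) = \lambda_n^{-1}\bigl(\lambda_n \zeta^{\varepsilon_n}(\cdot + w_n)\bigr) \to \zeta^\star\) in \(H^1(\R^2)\), as required. The main technical point is precisely this local Ekeland step: whereas Proposition \ref{prop:minimising sequence} merely asserts the existence of \emph{some} Palais--Smale minimising sequence, here one needs a Palais--Smale sequence that remains close in \(H^1\) to the specific sequence \(\{\lambda_n \zeta^{\varepsilon_n}\}\), which is obtained from the standard distance-aware version of Ekeland's theorem applied to the smooth natural constraint \(N_0\).
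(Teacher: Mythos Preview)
Your argument is correct, but it takes a detour that the paper avoids. The upper bound for (i) is handled identically in both approaches. For the lower bound and for (ii), however, you project \(\zeta^{\varepsilon_n}\) onto \(N_0\) via the rescaling \(\lambda_n\zeta^{\varepsilon_n}\), obtain a minimising sequence for \(\mathcal T_0|_{N_0}\), and then invoke a distance-controlled Ekeland step to upgrade it to a Palais--Smale sequence before applying Theorem~\ref{thm:second existence result, epsilon zero}. The paper's route is shorter: since \(\zeta^{\varepsilon_n}\) is an \emph{exact} critical point of \(\mathcal T_{\varepsilon_n}\) and \(\diff\mathcal E_{\varepsilon_n}\circ\chi_{\varepsilon_n}(\Diff)\to 0\) uniformly on \(B_{M-1}(0)\), one has \(\diff\mathcal T_0[\zeta^{\varepsilon_n}] = \diff\mathcal T_{\varepsilon_n}[\zeta^{\varepsilon_n}] - \diff\mathcal E_{\varepsilon_n}[\zeta^{\varepsilon_n}] = o(1)\) directly, so \(\{\zeta^{\varepsilon_n}\}\) is \emph{already} a Palais--Smale sequence for \(\mathcal T_0\). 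Lemma~\ref{lemma:general convergence} (which does not require membership of \(N_0\), only the Palais--Smale and non-vanishing conditions) then applies immediately to give both the strong \(H^1\)-convergence \(\zeta^{\varepsilon_n}(\cdot+w_n)\to\zeta^\star\) and the lower bound \(\tau_0\leq\mathcal T_0(\zeta^\star)=\lim_n\mathcal T_0(\zeta^{\varepsilon_n})=\liminf_n\tau_{\varepsilon_n}\). This bypasses both the rescaling onto \(N_0\) and the Ekeland step entirely. Your approach buys nothing extra here, but it is a valid alternative; the paper's observation that the ground states themselves form a Palais--Smale sequence for the limiting functional is simply the more economical one.
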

\begin{proof}
Because ${\mathcal E}_\varepsilon\circ\chi_\varepsilon(\Diff)$ and $\diff {\mathcal E}_\varepsilon\circ\chi_\varepsilon(\Diff)$
converge uniformly to zero over $B_{M-1}(0) \subset H^1(\R^2)$ as $\varepsilon \to 0$,
 we find that
\[{\mathcal T}_{\varepsilon_n}(\zeta^{\varepsilon_n})-{\mathcal T}_0(\zeta^{\varepsilon_n})=o(1),
\qquad
\diff{\mathcal T}_{\varepsilon_n}[\zeta^{\varepsilon_n}]-\diff{\mathcal T}_0[\zeta^{\varepsilon_n}]=o(1)
\]
as $n \to \infty$ and hence that
\[
\lim_{n \to \infty}|\diff{\mathcal T}_0[\zeta^{\varepsilon_n}]|_{H^1 \to \R} =0.
\]
Combining Corollary \ref{cor:no vanishing} with
$S(\zeta^{\varepsilon_n}) \geq \tau_{\varepsilon_n}-\bigO(\varepsilon_n^{\frac{1}{2}})$ and $\liminf_{\varepsilon \to 0} \tau_\varepsilon { \gtrsim 1}$
yields
\[\sup_{j \in \Z^2} |\zeta^{\varepsilon_n}|_{L^2(Q_j)} \gtrsim 1.\]
According to Lemma \ref{lemma:general convergence} there exists
$\{w_n\} \subset \Z^2$ and $\zeta^\star \in N_0$ such that $\diff{\mathcal T}_0[\zeta^\star]=0$ and
$ \zeta^{\varepsilon_n}(\cdot+w_n) \to \zeta^\star$ in $H^1(\R^2)$ as $n \to \infty$.
It follows that
\begin{eqnarray}
\tau_0 & \leq & {\mathcal T}_0(\zeta^\star) \nonumber \\
& = & \lim_{n \to \infty} {\mathcal T}_0(\zeta^{\varepsilon_n}) \nonumber \\
& = & \lim_{n \to \infty} 
\big({\mathcal T}_0(\zeta^{\varepsilon_n})-{\mathcal T}_{\varepsilon_n}(\zeta^{\varepsilon_n})\big)
+ \lim_{n \to \infty} 
\big({\mathcal T}_{\varepsilon_n}(\zeta^{\varepsilon_n})- \tau_{\varepsilon_n}\big)
+ \liminf_{n \to \infty} \tau_{\varepsilon_n} \nonumber \\
& = &  \liminf_{n \to \infty} \tau_{\varepsilon_n}. \label{eq:inf est}
\end{eqnarray}

Proposition \ref{prop:approximate N0} (with $\lambda_0=1$ and $\zeta_0=\zeta^0$)
asserts the existence of $\xi_n \in N_{\varepsilon_n}$
with $\xi_n \to \zeta^0$ in $H^1(\R^2)$ and hence ${\mathcal T}_0(\xi_n) \to
{\mathcal T}_0(\zeta^0)= \tau_0$ as $n \to \infty$.
Because ${\mathcal E}\circ \chi_\varepsilon(\Diff)$
converges uniformly to zero over $B_{M-1}(0) \subset H^1(\R^2)$ as
$\varepsilon \to 0$, one finds that
\[{\mathcal T}_{\varepsilon_n}(\xi_n)-{\mathcal T}_0(\xi_n) = o(1)\]
as $n \to \infty$, whence
\begin{align}
\limsup_{n \to \infty} \tau_{\varepsilon_n}
& \leq \limsup_{n \to \infty} {\mathcal T}_{\varepsilon_n}(\xi_n) \nonumber \\
& = \lim_{n \to \infty}\big({\mathcal T}_{\varepsilon_n}(\xi_n) - {\mathcal T}_0(\xi_n)\big)
+ \lim_{n \to \infty}\big({\mathcal T}_0(\xi_n) - \tau_0\big) + \tau_0 \nonumber \\
& = \tau_0. \label{eq:sup est}
\end{align}
The stated results follow from inequalities \eqref{eq:inf est} and \eqref{eq:sup est}.
\end{proof}

Finally, we record the corresponding result for FDKP solutions.

\begin{theorem} 
Let $\{\varepsilon_n\}$ be a sequence with $\lim_{n \to \infty} \varepsilon_n=0$
and let $u^{\varepsilon_n}$ be a critical point of ${\mathcal I}_{\varepsilon_n}$
with ${\mathcal I}_{\varepsilon_n}(u^{\varepsilon_n})=\varepsilon_n^2 \tau_{\varepsilon_n}$,
so that  $u^{\varepsilon_n}(\zeta^{\varepsilon_n})$ defines a ground state $\zeta^{\varepsilon_n}$ of ${\mathcal T}_\varepsilon$. There exists
$\{w_n\} \subset \Z^2$ and a ground state $\zeta^\star$ of ${\mathcal T}_0$ such that { a subsequence of
$\{\zeta^{\varepsilon_n}(\cdot+w_n)\}$ converges to} $\zeta^\star$ in $H^1(\R^2)$ as $n \to \infty$.
\end{theorem}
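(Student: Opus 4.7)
The plan is to reduce the theorem directly to Lemma~\ref{lem:infima converge}(ii) by checking that the energy hypothesis on $u^{\varepsilon_n}$ forces each $\zeta^{\varepsilon_n}$ to be a ground state of $\mathcal{T}_{\varepsilon_n}$; once this bookkeeping is carried out, the convergence is already contained in that lemma.

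First I would apply the inverse of the composite map $u \mapsto u_1 \mapsto u_1^+ \mapsto \tilde u_1^+ \mapsto \zeta$ from Section~\ref{sec:reduction} to produce from $u^{\varepsilon_n}$ a unique $\zeta^{\varepsilon_n} \in B_M(0)$. Lemma~\ref{prop:weak trace back}(i) guarantees that $\zeta^{\varepsilon_n}$ is a nontrivial critical point of $\mathcal{T}_{\varepsilon_n}$, and Proposition~\ref{prop:nc props}(ii) then places $\zeta^{\varepsilon_n}$ on the natural constraint set $N_{\varepsilon_n}$.

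Next I would invoke the identifications
\[
\mathcal{T}_\varepsilon(\zeta) = \varepsilon^{-2}\widetilde{\mathcal{K}}_\varepsilon(\tilde u_1^+(\zeta)) = \varepsilon^{-2}\widetilde{\mathcal{I}}_\varepsilon(u_1) = \varepsilon^{-2}\mathcal{I}_\varepsilon\bigl(u_1 + u_\mathrm{q}(u_1) + u_\mathrm{c}(u_1)\bigr)
\]
established in Section~\ref{sec:reduction} in order to compute
\[
\mathcal{T}_{\varepsilon_n}(\zeta^{\varepsilon_n}) = \varepsilon_n^{-2}\mathcal{I}_{\varepsilon_n}(u^{\varepsilon_n}) = \tau_{\varepsilon_n}.
\]
Since $\zeta^{\varepsilon_n} \in N_{\varepsilon_n}$ and $\tau_{\varepsilon_n} = \inf_{N_{\varepsilon_n}} \mathcal{T}_{\varepsilon_n}$, this confirms that $\zeta^{\varepsilon_n}$ attains the infimum and is therefore a ground state of $\mathcal{T}_{\varepsilon_n}$.

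Lemma~\ref{lem:infima converge}(ii) then supplies translations $\{w_n\} \subset \Z^2$ and a ground state $\zeta^\star$ of $\mathcal{T}_0$ such that a subsequence of $\{\zeta^{\varepsilon_n}(\cdot + w_n)\}$ converges to $\zeta^\star$ in $H^1(\R^2)$, which is the desired statement. No genuine obstacle appears: the concentration-compactness argument of Lemma~\ref{lemma:application of cc} together with the infimum convergence $\tau_{\varepsilon_n} \to \tau_0$ of Lemma~\ref{lem:infima converge}(i) have already done all of the heavy lifting, and the present result is essentially a translation of those facts from the DS side back to the FDKP side via the reduction of Section~\ref{sec:reduction}.
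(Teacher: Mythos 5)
Your argument is correct and is exactly the reduction the paper intends; the paper in fact gives no separate proof of this theorem, presenting it as an immediate consequence of Lemma~\ref{lem:infima converge}(ii) once one notes (as you do) that the chain of identities $\mathcal{T}_\varepsilon(\zeta)=\varepsilon^{-2}\widetilde{\mathcal{K}}_\varepsilon(\tilde u_1^+(\zeta))=\varepsilon^{-2}\widetilde{\mathcal{I}}_\varepsilon(u_1)=\varepsilon^{-2}\mathcal{I}_\varepsilon(u_1+u_2(u_1))$ converts the energy normalisation $\mathcal{I}_{\varepsilon_n}(u^{\varepsilon_n})=\varepsilon_n^2\tau_{\varepsilon_n}$ into $\mathcal{T}_{\varepsilon_n}(\zeta^{\varepsilon_n})=\tau_{\varepsilon_n}$, i.e.\ $\zeta^{\varepsilon_n}$ attains the infimum over $N_{\varepsilon_n}$. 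Your use of Lemma~\ref{prop:weak trace back}(i) to pass the critical-point property from $\mathcal{I}_{\varepsilon_n}$ to $\mathcal{T}_{\varepsilon_n}$, and of Proposition~\ref{prop:nc props}(ii) to place $\zeta^{\varepsilon_n}$ on $N_{\varepsilon_n}$, fills in precisely the bookkeeping the paper leaves implicit.
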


\normalsize
\bibliographystyle{siam}
\bibliography{mdg}
\end{document}